\newtheorem*{propositionA}{Proposition~\ref{free-by-(torsion-free nilpotent)}}
\newtheorem{theorem}{Theorem}[section]
\newtheorem*{theorem*}{Theorem}
\newtheorem*{lemma*}{Lemma}
\newtheorem{proposition}[theorem]{Proposition}
\newtheorem{lemma}[theorem]{Lemma}
\newtheorem{corollary}[theorem]{Corollary}
\theoremstyle{definition}
\newtheorem{definition}[theorem]{Definition}
\newtheorem{example}[theorem]{Example}
\theoremstyle{remark}
\newtheorem{remark}[theorem]{Remark}
\begin{document}
\title{On subdirect products of type $FP_n$ of limit groups over Droms RAAGs}
\author{Dessislava H. Kochloukova, Jone Lopez de Gamiz Zearra}

\maketitle

\begin{abstract}
We generalize some known results for limit groups over free groups and residually free groups to limit groups over Droms RAAGs and residually Droms RAAGs, respectively.
We show that limit groups over Droms RAAGs are free-by-(torsion-free nilpotent). We prove that if $S$ is a full subdirect product of type $FP_s(\mathbb{Q})$ of limit groups over Droms RAAGs with trivial center, then the projection of $S$ to the direct product of any $s$ of the limit groups over Droms RAAGs has finite index. 
Moreover, we compute the growth of homology groups and the volume gradients for limit groups over Droms RAAGs in any dimension and for finitely presented residually Droms RAAGs of type $FP_m$ in dimensions up to $m$. In particular, this gives the values of the analytic $L^2$-Betti numbers of these groups in the respective dimensions.
\end{abstract}

\section{Introduction}

The class of \emph{right-angled Artin groups (RAAGs)} generalizes the class of finitely generated free groups, by allowing relations saying that some of the generators commute. In recent years, RAAGs have been extensively studied due to their rich structure, in particular, due to the variety of their subgroups. For example, it follows from works of I. Agol (\cite{Agol}) and  D. Wise (\cite{Wise}) that fundamental groups of closed, irreducible, hyperbolic $3$-manifolds are virtually subgroups of RAAGs.

In general, not all subgroups of RAAGs are again RAAGs, and Droms RAAGs are precisely those RAAGs with the property that all of their finitely generated subgroups are again RAAGs. In particular, finitely generated free groups are Droms RAAGs. Droms RAAGs can also be described as the ones where the defining graph does not contain induced squares or straight line paths with $3$ edges (see \cite{Droms2}). Alternatively, the class of Droms RAAGs can be described as the $Z \ast$-closure of $\mathbb{Z}$ (see Section \ref{Section 2} for the definition).

The class of \emph{limit groups over Droms RAAGs} extends the class of limit groups over free groups.  The theory of limit groups over free groups was developed by O. Kharlampovich, A. Miasnikov, under the name of \emph{fully residually free groups}, and by Z. Sela, as a method for obtaining information on solutions of systems of equations over free groups. The class of limit groups over free groups contains all finitely generated free abelian groups and surface groups of Euler characteristic at most $-2$. Limit groups over free groups are commutative transitive and as shown by F. Dahmani (\cite{Dahmani}) they satisfy the Howson property.

The study of limit groups over Droms RAAGs was initiated by M. Casals-Ruiz, A. Duncan and  I. Kazachkov, in \cite{Montse} and \cite{Montse2}, with the aim of studying systems of equations over RAAGs. A group $\Gamma$ is a limit group over a Droms RAAG if it is precisely a finitely generated group that is fully residually a Droms RAAG, that is, for every finite subset $S$ of $\Gamma$ there is a Droms RAAG $G$ together with a homomorphism of groups $\varphi \colon \Gamma \mapsto G$ whose restriction to $S$ is injective.

Limit groups over Droms RAAGs share many properties with limit groups over free groups, including homological and homotopical finiteness properties. For example, M. Casals-Ruiz, A. Duncan and  I. Kazachkov showed in \cite{Montse} that limit groups over coherent RAAGs are finitely presented and they are subgroups of ICE groups over coherent RAAGs (see definition \ref{defIce}). In particular, this holds for limit groups over Droms RAAGs and implies that limit groups over Droms RAAGs are of type $FP_{\infty}$, and that their finitely generated subgroups are again limit groups over Droms RAAGs. In \cite{Bridson3} M. Bridson and H. Wilton proved that finitely presentable subgroups of  finitely generated residually free groups are separable and that the subgroups of type $FP_{\infty}$ are virtual retracts.  Though limit groups over Droms RAAGs share many properties with limit groups over free groups, there are properties like commutative transitivity and the Howson property that do not hold for a general Droms RAAG. For example, by \cite{Delgado}, the RAAGs that satisfy the Howson property are precisely the free products of abelian groups, so in particular, are limit groups over free groups.

In \cite{Desi} D. Kochloukova showed that limit groups over free groups are free-by-(torsion-free nilpotent). Our first result is a generalization of this fact for limit groups over Droms RAAGs. The rest of the results in this paper will be heavily dependent on this fact.

\begin{propositionA}
Every limit group over a Droms RAAG is free-by-(torsion-free nilpotent).
\end{propositionA}

Let us recall some definitions. Let $S$ be a subgroup of a direct product $G_1\times \cdots \times G_n$. We say that $S$ is \emph{full} if $S \cap G_i \neq 1$ for all $i\in \{1,\dots,n\}$ and $S$ is a \emph{subdirect product} if $p_i(S)= G_i$ for all $i\in \{1,\dots,n\}$, where $p_i$ is the projection map $G_1\times \cdots \times G_n \mapsto G_i$.

G. Baumslag, A. Miasnikov, V. Remeslennikov proved in \cite{Baumslag} that every finitely generated residually Droms RAAG is a subgroup of a direct product of finitely many limit groups over Droms RAAGs. 
The study of subdirect products of free groups was initiated by G. Baumslag and J. Roseblade in \cite{Roseblade}. In a sequence of papers that culminated in \cite{Bridson2} and \cite{Bridson}, M. Bridson, J. Howie, C. Miller and H. Short studied subgroups of the direct product of limit groups over free groups. One of the results is that if $\Gamma_1,\dots, \Gamma_n$ are non-abelian limit groups over free groups and $S$ is a full subdirect product of $\Gamma_1\times \cdots \times \Gamma_n$, then $S$ is of type $FP_2(\mathbb{Q})$ if and only if $p_{i,j}(S)$ is of finite index in $\Gamma_i \times \Gamma_j$ for all $1\leq i< j \leq n$, where  $p_{i,j}$ denotes the projection map $S\mapsto \Gamma_i \times \Gamma_j$. These results were generalized by J. Lopez de Gamiz Zearra in \cite{Jone} to the class of limit groups over Droms RAAGs (see Section \ref{Limit groups}).
In addition, in \cite{Desi} D. Kochloukova generalized the previous result concerning limit groups over free groups by showing that if $S$ is of type $FP_s$, then it virtually surjects onto $s$ coordinates. However, the converse is still an open problem.

We recall that a group $S$ is of \emph{homological type $FP_s(R)$} (or it is $FP_s$ over $R$) for a commutative, associative ring $R$ with identity element if the trivial $R G$-module $R$ has a projective resolution with all modules finitely generated in dimensions up to $s$.  A group is of type $FP_s$ if it is $FP_s$ over $\mathbb{Z}$. The goal of Section \ref{Section 5} is to extend the above results to the class of limit groups over Droms RAAGs.

\begin{theorem}\label{FPs}
Let $G_1, \dots,   G_m$ be limit groups over Droms RAAGs such that each $G_i$ has trivial center and let \[S < G \colon = G_1 \times \cdots \times G_m\] be a finitely generated full subdirect product. Suppose further that, for some fixed number $s \in \{ 2, \dots, m \}$, for every subgroup $S_0$ of finite index in $S$ the homology group $H_i (S_0 , \mathbb{Q})$ is finite dimensional (over $\mathbb{Q}$) for all $ i \leq  s.$ Then for every canonical projection
\[p_{j_1, \dots,  j_s} \colon S \mapsto G_{j_1} \times \cdots \times G_{ j_ s}\]
the index of $p_{j_1, \dots, j_s} (S)$ in  $G_{j_1} \times \ldots \times G_{ j_s}$  is finite.

In particular, if $S$ is of type $FP_s$ over $\mathbb{Q}$, then for every canonical projection $p_{j_1, \dots,  j_s}$ the index of $p_{j_1, \dots, j_s} (S)$ in  $G_{j_1} \times \cdots \times G_{ j_s}$  is finite.    
\end{theorem}

In Section \ref{Section 6} we discuss the growth of homology groups and the volume gradients of limit groups over Droms RAAGs and of finitely presented residually Droms RAAGs.
The \emph{growth of homology groups} is measured by the limit \[\lim_{n \to \infty  } \dim_{K} H_{i}(B_{n}, K) \slash [G \colon B_{n}],\] whenever this limit exits. Here $K$ is a field and  $(B_{n})$ is an \emph{exhausting normal chain} of $G$, that is, a chain $(B_{n})$ of normal subgroups of finite index such that $B_{n+1} \subseteq B_{n}$ and $\bigcap_{n} B_{n}=1$. As every residually finite group has an exhausting normal chain, the same holds for limit groups over coherent RAAGs. Thanks to the \emph{approximation theorem} of W. L\"uck we know that if $K$ has characteristic $0$, $G$ is residually finite, finitely presented and of type $FP_{m}$, then the limit exists for $i<m$ and equals the $L^{2}$-Betti number $\beta_{i}(G)$. In particular, in this case the limit is independent of the exhausting normal chain.

Given a group $G$ of homotopical type $F_{m}$, $\text{vol}_{m}(G)$ was defined in \cite{Desi2} as the least number of $m$-cells among all classifying spaces $K(G,1)$ with finite $m$-skeleton. For instance, $\text{vol}_{1}(G)$ equals $d(G)$, where $d(G)$ is the minimal number of generators of $G$.
The most studied volume gradient is the $1$-dimensional one. That is, the \emph{rank gradient of $G$ with respect to $(B_{n})$}, which is defined to be $\lim_{n } d(B_{n}) \slash [G \colon B_{n}]$ and it is denoted by $RG(G,(B_{n}))$. Its study was initiated by M. Lackenby in \cite{Lack} and a combinatorial approach to the rank gradient was developed by M. Abert, N. Nikolov and A. Jaikin-Zapirain in \cite{ANZ}. In general, the \emph{$m$-dimensional volume gradient of $G$ with respect to $(B_{n})$} is $$\lim_{n \to \infty} \text{vol}_{m}(B_{n}) \slash [G \colon B_{n}].$$ In \cite{Desi2} M. Bridson and D. Kochloukova calculated the above volume and homology type gradients for limit groups over free groups and in the case of residually free groups they found particular filtrations where the homology growth and the rank gradient can be calculated. The results about homology growth in \cite{Desi2} and in this paper are independent of the characteristic of the field $K$.

In Section \ref{Section 6}  we compute the homology growth and the volume gradients for a more general setting, namely, for limit groups over coherent RAAGs. Note that this class of groups is more general than the class of limit groups over Droms RAAGs. We use the approach from \cite{Desi2} and we show that limit groups over coherent RAAGs are slow above dimension 1, hence are $K$-slow above dimension 1.  This  implies the following results.

\begin{theorem}\label{Theorem C}
Let $G$ be a limit group over a coherent RAAG and let $(B_{n})$ be an exhausting normal chain in $G$. Then,\\[5pt]
(1) the rank gradient $RG(G, (B_n)) = \lim_{n\to \infty} \frac{d(B_{n})}{[G \colon B_{n}]}= -\chi(G).$\\[5pt]
(2) the deficiency gradient $DG(G, (B_n)) = \lim_{n\to \infty} \frac{\text{def}(B_{n})}{[G \colon B_{n}]}= \chi(G)$.\\[5pt]
(3) the $k$-dimensional volume gradient $\lim_{n \to \infty} \frac{\text{vol}_{k}(B_{n})}{[G \colon B_{n}]} =0$ for $k\geq 2$.    
\end{theorem}

\begin{theorem}\label{Theorem D}
Let $K$ be a field, $G$ a limit group over a coherent RAAG and $(B_{n})$ an exhausting normal chain  in $G$. Then,\\[5pt]
(1) $\lim_{n\to \infty} \frac{\dim_{K}H_{1}(B_{n},K)}{[G \colon B_{n}]}=-\chi(G).$\\[5pt]
(2) $\lim_{n\to \infty} \frac{\dim_{K} H_{j}(B_{n} , K)}{[G \colon B_{n}]}=0$ for all $j\geq 2$.    
\end{theorem}

Using results from \cite{Desi2} we also compute the homology growth up to dimension $m$ and the volume gradients in low dimensions of residually Droms RAAGs of type $FP_m$ for $m \geq 2$. We cannot apply the same method to the class of residually coherent RAAGs since Theorem \ref{FPs} is a key result in this method and in \cite{Jone-Montse} it is proved that this no longer holds for coherent RAAGs.

\begin{theorem}\label{Theorem E}
Let $m\geq 2$ be an integer, let $G$ be a residually Droms RAAG of type $FP_m$, and let $\rho$ be the largest integer such that $G$ contains a direct product of $\rho$ non-abelian free groups. Then, there exists an exhausting sequence $(B_n)$ in $G$ so that for all fields $K$,\\[5pt]
(1) if $G$ is not of type $FP_{\infty}$, then \[\lim_{n  \to \infty } \frac{\dim H_i(B_n, K)}{[G \colon B_n]}=0 \hbox{ for all }0\leq i \leq m;\] 
(2) if $G$ is of type $FP_{\infty}$ then for all $j\geq 1$,
\[ \lim_{n   \to \infty } \frac{\dim H_{\rho} (B_n, K)}{[G \colon B_n]}= (-1)^{\rho} \chi(G), \quad \lim_{n \to \infty } \frac{\dim H_{j} (B_n, K)}{[G \colon B_n]}=0 \quad \text{for all} \quad j\neq \rho.\]    
\end{theorem}

\begin{remark}
The exhausting sequence $(B_n)$ will be constructed to be an exhausting normal chain in a subgroup of finite index in $G$.   
\end{remark}

   The following result is a low dimensional homotopical version of Theorem \ref{Theorem E}.

\begin{theorem}\label{Theorem F}
Every group $G$ that is a finitely presented residually Droms RAAG but is not a limit group over a Droms RAAG admits an exhausting normal chain $(B_n)$ in $G$ with respect to which the rank gradient \[ RG(G,(B_n))= \lim_{n\to \infty} \frac{d(B_n)}{[G \colon B_n]}=0.\]
Furthermore, if $G$ is of type $FP_3$ but it is not commensurable with a product of two limit groups over Droms RAAGs, $(B_n)$ can be chosen so that the deficiency gradient $DG(G,(B_n))=0.$    
\end{theorem}
\medskip

\textit{Acknowledgements.} We are thankful to Montserrat Casals-Ruiz for helpful talks during the preparation of this manuscript and the referee for the suggestions that improved the paper. D. Kochloukova was partially supported by the CNPq grant 301779/2017-1  and by the FAPESP grant 2018/23690-6. J. Lopez de Gamiz Zearra was supported by the Basque Government grant IT974-16 and the Spanish Government grant MTM2017-86802-P.

\section{Preliminaries}
\label{Section 2}

\subsection{Right-angled Artin groups} \label{def-def}

Let us recall the definition of a right-angled Artin group. Given a finite simplicial graph $X$ with vertex set $V(X)$, the corresponding \emph{right-angled Artin group (RAAG)}, denoted by $GX$, is given by the following presentation: 
\[ GX= \langle V(X) \mid xy=yx \iff x \text{ and } y \text{ are adjacent}\rangle.\]

Subgroups of RAAGs can be wild, in particular, not all subgroups of RAAGs are themselves RAAGs. RAAGs that have all their finitely generated subgroups again of this type are known as \emph{Droms RAAGs}.  Droms RAAGs can also be described as the ones where the defining graph does not contain induced squares or straight line paths with $3$ edges (see \cite{Droms2}).

 A group is \emph{coherent} if all its finitely generated subgroups are also finitely presented.  Droms proved that a RAAG is coherent if and only if its underlying
graph contains no full subgraph isomorphic to an $n$-cycle, for $n \geq 4$, see \cite{Droms}. In particular, Droms RAAGs are coherent RAAGs.

\begin{definition}\label{Definition hierarchy}
Let $\mathcal{G}$ be a class of groups. The \emph{$Z \ast$-closure of $\mathcal{G}$}, denoted by $Z \ast (\mathcal{G})$, is the union of classes $(Z \ast (\mathcal{G}))_{k}$ defined as follows. At level $0$, the class $(Z \ast (\mathcal{G}))_{0}$ equals $\mathcal{G}$. A group $G$ lies in $(Z \ast (\mathcal{G}))_{k}$ if and only if
\[ G \simeq \mathbb{Z}^m \times (G_{1}\ast \cdots \ast G_{n}),\] where $m\in \mathbb{N}\cup \{0\}$ and the group $G_{i}$ lies in $(Z \ast (\mathcal{G}))_{k-1}$ for all $i\in \{1,\dots,n\}$.

The \emph{level of $G$}, denoted by $l(G)$, is the smallest $k$ for which $G$ belongs to  $(Z \ast (\mathcal{G}))_{k}$.
\end{definition}

In this terminology, Droms showed in \cite{Droms2} that the class of Droms RAAGs is the $Z\ast-$closure of $\mathbb{Z}$. Analogously, it is the $Z\ast-$closure of the class of finitely generated free groups. We use the latter when we fix the level of a Droms RAAG.

\begin{example}
If $G$ is a Droms RAAG such that $l(G)=0$, then $G$ is a finite rank free group. If $l(G)=1$, then \[ G \simeq \mathbb{Z}^m \times F,\] for some $m \geq 1$, $F$ free of finite rank and $G$ is not $\mathbb{Z}$. If $l(G)=2$, then
\[G \simeq \mathbb{Z}^m \times \big{(}  (\mathbb{Z}^{n_{1}} \times F_{k_{1}}) \ast \cdots \ast (\mathbb{Z}^{n_{l}} \times F_{k_{l}})\big{)}, \] for some $m,n_{i}, k_{i} \in \mathbb{N} \cup \{0\}$, $\sum_i n_i \geq 1$, $l\geq 2$ and  for $i\in \{1,\dots,l\}$, $F_{k_i}$ free of finite rank $k_i$.
\end{example}

\subsection{Limit groups over Droms RAAGs}
\label{Limit groups}

 \begin{definition}  \label{def-1}
 A group $\Gamma$ is a limit group over a Droms RAAG if it is precisely a finitely generated group that is fully residually a Droms RAAG, that is, for every finite subset $S$ of $\Gamma$ there is a Droms RAAG $G$ together with a homomorphism of groups $\varphi \colon \Gamma \mapsto G$ whose restriction to $S$ is injective.
\end{definition} 

\begin{definition} \label{def-2}
If $G$ is a group, a {limit group over $G$} is a group $H$ that is finitely generated and fully residually $G$, that is, for any finite set of non-trivial elements $S \subseteq H$ there is a homomorphism $\varphi \colon H \mapsto G$ which is injective on $S$.
\end{definition} 

Note that in Definition \ref{def-2} $G$ is a fixed group but in Definition \ref{def-1} $G$ actually depends on $S$.

\begin{lemma}  A group $\Gamma$ is a limit group over a Droms RAAG if and only if there is a Droms RAAG $G$ such that $\Gamma$ is a limit group over $G$.
\end{lemma}

\begin{proof} It suffices to show that every group $\Gamma$ that satisfies Definition \ref{def-1} is  a limit group over $G$ for some appropriate Droms RAAG $G$. 
By definition, for any finite set of non-trivial elements $S \subseteq \Gamma$ there is a homomorphism $\varphi_S \colon \Gamma \mapsto G_S$ which is injective on $S$, where $G_S$ is a Droms RAAG. Then by substituting $G_S$ if necessary with $\text{im}(\varphi_S)$ (since $\text{im}(\varphi_S)$ is a Droms RAAG) we can assume that $d(G_S) \leq d(\Gamma)$.  But since there are only finitely many Droms RAAGs with at most $d(\Gamma)$ generators,  say $D_1, \dots, D_t$, we conclude that $G_S \in \{ D_1, \ldots, D_t \}$ and  $G_S$ is a subgroup of the fixed Droms RAAG $G \colon = D_1 \ast \cdots \ast D_t$.
\end{proof}

By  \cite[Theorem F1]{Baumslag}, groups that are residually Droms RAAGs are precisely finitely generated subgroups of a  finite direct product of limit groups over Droms RAAGs. In this section we state the basic properties of residually Droms RAAGs and limit groups over Droms RAAGs that we need in later sections.

 Classical limit groups, i.e. limit groups over free groups, admit  a hierarchical structure. Limit groups over free groups are the finitely generated subgroups of $\omega$-
residually free tower groups, known as $\omega$-rft groups (see \cite{K-M} and \cite{Sela}). An $\omega$-rft space of height $h$  is defined by induction on $h$ and an $\omega$-rft group is
the fundamental group of an $\omega$-rft space.
A height 0 space is the wedge of a finite collection of circles, closed
hyperbolic surfaces and tori of arbitrary dimension, excluding the closed surface of
Euler characteristic $-1$.
An $\omega$-rft space $Y_h$ of height $h$ is obtained from an $\omega$-rft space $Y_{h-1}$ of height $h-1$ by
attaching a block of abelian or quadratic type (for details see  \cite{B-H}, \cite{Sela}). The classical height of a limit group (over a free group) $S$ is the minimal height of an $\omega$-rft group that has a subgroup isomorphic to $S$.

 Limit groups over Droms RAAGs admit a hierarchical structure that comes from the fact that Droms RAAGs are the direct product of a free abelian group (possibly trivial) and a free product, so we can use the work of M. Casals-Ruiz and I. Kazachkov on limit groups over free products (see \cite{Montse3}) to obtain that hierarchy.

\begin{proposition}\cite[Proposition 2.1]{Bridson}\label{Graph tower 1}
Let $G$ be a Droms RAAG such that $l(G)=0$ (that is, a finitely generated free group) and let $\Gamma$ be a limit group over $G$ of height $h(\Gamma)$.

If $h(\Gamma)=0$, then $\Gamma$ equals $M_{1}\ast \cdots \ast M_{j}$ and $M_{1},\dots,M_{j}$ are free abelian groups or surface groups with Euler characteristic at most $-2$.

If $h(\Gamma)\geq 1$, then $\Gamma$ acts cocompactly on a tree $T$ where the edge stabilizers are trivial or infinite cyclic and the vertex groups are limit groups over $G$ of height at most $h(\Gamma)-1$.  Moreover, at least one of the vertex groups is nonabelian.
\end{proposition}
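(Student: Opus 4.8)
The plan is to establish this as the structure theorem for finitely generated fully residually free groups, proceeding via an action on a real tree together with the Rips machine, and organising the whole argument as an induction on the height $h(\Gamma)$. First I would dispose of free decomposability: by Grushko's theorem $\Gamma$ splits as a free product of freely indecomposable factors together with infinite cyclic groups, each factor being again a limit group over $G$, so it suffices to analyse a single freely indecomposable factor $\Gamma_0$. If every such factor is abelian or a surface group, we are in the base case $h(\Gamma)=0$ and the free product is exactly the displayed $M_1 \ast \cdots \ast M_j$; otherwise some $\Gamma_0$ is nonabelian and not a surface group, and the work is to split it.

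For such a $\Gamma_0$ the key is to produce a nontrivial isometric action on an $\mathbb{R}$-tree. Since $\Gamma_0$ is fully residually free, there is a sequence of homomorphisms $\varphi_n \colon \Gamma_0 \to G$ into the free group that are injective on larger and larger balls; composing with the action of $G$ on its Cayley tree and rescaling by the growing translation lengths, the Bestvina--Paulin method yields an isometric $\Gamma_0$-action on an $\mathbb{R}$-tree $T_\infty$. I would then record the properties that make this action tractable: it is nontrivial (as $\Gamma_0$ is nonabelian and not virtually cyclic), has trivial global stabiliser, is stable, and --- crucially, using that limit groups are commutative transitive and CSA --- its arc stabilisers are abelian, hence infinite cyclic.

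Next I would feed $T_\infty$ into the Rips--Bestvina--Feighn structure theorem for stable $\mathbb{R}$-tree actions, decomposing it into axial (abelian), surface-type, and simplicial components. Reading this back as a graph-of-groups decomposition gives a splitting of $\Gamma_0$ with abelian edge groups; since arc stabilisers are cyclic and $\Gamma_0$ is freely indecomposable and neither abelian nor a surface group, one arranges the edge groups to be trivial or infinite cyclic, with the surface components appearing as surface vertex groups. Recombining the splittings of all freely indecomposable factors along the Grushko tree produces the desired cocompact $\Gamma$-action on a simplicial tree $T$ with trivial or infinite cyclic edge stabilisers, whose vertex groups are again finitely generated and fully residually $G$, hence limit groups over $G$.

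The technical heart, and the main obstacle, is twofold. First, one must verify that the $\mathbb{R}$-tree action genuinely satisfies the stability and cyclic arc-stabiliser hypotheses required to run the Rips machine; this is where the CSA and commutative-transitivity properties of limit groups over free groups are indispensable. Second is the bookkeeping of the height: I would invoke Sela's shortening argument to guarantee that the splitting realising the hierarchy yields vertex groups that are strictly simpler, namely limit groups over $G$ of height at most $h(\Gamma)-1$, so that the induction both reduces complexity and terminates. The assertion that at least one vertex group is nonabelian is tied to this choice of decomposition: a graph of abelian groups with cyclic edges that failed to produce a nonabelian vertex would already be accounted for at a lower level of the hierarchy, so the existence of a nonabelian vertex is exactly what certifies that we have descended one genuine step in height rather than splitting trivially.
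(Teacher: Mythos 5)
First, a remark on what you are being compared against: the paper gives no proof of this statement at all --- it is quoted verbatim as \cite[Proposition 2.1]{Bridson}, where it is in turn deduced from the structure theory of Sela and of Kharlampovich--Miasnikov. So you are reconstructing a deep external theorem, and your outline does follow the standard route (Grushko decomposition, Bestvina--Paulin limit action on an $\mathbb{R}$-tree, the Rips machine, the shortening argument). As a roadmap it is the right one.

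There are, however, two concrete gaps. The first is the step ``arc stabilisers are abelian, hence infinite cyclic.'' This inference is false: maximal abelian subgroups of limit groups over free groups are free abelian of arbitrary finite rank (indeed $\mathbb{Z}^n$ is itself a limit group), and CSA/commutative transitivity only gives you \emph{abelian} arc stabilisers. The Rips--Bestvina--Feighn analysis therefore naturally produces an abelian splitting, not one over trivial or infinite cyclic groups. Getting the edge groups down to trivial-or-$\mathbb{Z}$, as the proposition asserts, requires additional structure --- in the literature this comes from the embedding of limit groups into $\omega$-residually free towers / ICE groups, where the noncyclic abelian pieces $\mathbb{Z}^n$ appear as \emph{vertex} groups attached along cyclic edge groups, or from Sela's principal cyclic splittings. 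The second gap is the logic of the induction: the height $h(\Gamma)$ is \emph{defined} by the existence and minimal depth of exactly this hierarchy, so ``induction on height'' presupposes what is to be proved. What actually has to be established is that the process of splitting and passing to vertex groups terminates, and the shortening argument alone does not give this; one needs the descending chain condition on limit groups (equivalently, the equational Noetherianity of free groups and the finiteness of the Makanin--Razborov diagram), which your sketch never invokes. Relatedly, your justification of the clause ``at least one vertex group is nonabelian'' is only a heuristic; it is precisely the termination/minimality argument above that makes it precise. Finally, a small point: the restriction to surface groups of Euler characteristic at most $-2$ in the base case (excluding, e.g., the non-orientable genus-$3$ surface group, which is not fully residually free) is part of the content and is not addressed by your Grushko step.
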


\begin{proposition}\cite[Theorem 8.2]{Montse3} \label{Height}\label{Graph tower 2}
Let $G$ be a Droms RAAG with $l(G) \geq 1$ so that $G$ is of the form $\mathbb{Z}^m \times (G_{1}\ast \cdots \ast G_{n})$ and $l(G_{i}) \leq l(G)-1$ for $i\in \{1,\dots,n\}$ and let $\Gamma$ be a limit group over $G$. Then, $\Gamma$ is $\mathbb{Z}^l \times \Lambda$ where $\Lambda$ is a limit group over $G_{1}\ast \cdots \ast G_{n}$ and if $m=0$, then $l=0$. If $h(\Lambda)=0$, then \[ \Lambda= A_{1}\ast \cdots \ast A_{j},\] where for each $t\in \{1,\dots,j\}$, $A_{t}$ is a limit group over $G_{i}$ for some $i\in \{1,\dots,n\}$.

If $h(\Lambda) \geq 1$, then $\Lambda$ acts cocompactly on a tree $T$ where the edge stabilizers are trivial or infinite cyclic and the vertex groups are limit groups over $G_{1}\ast \cdots \ast G_{n}$ of height at most $h(\Lambda)-1$.  Moreover, at least one of the vertex groups has trivial center.
\end{proposition}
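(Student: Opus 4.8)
The plan is to mirror the proof of Proposition~\ref{Graph tower 1} for the free case, replacing the single free group by the free product $H \coloneqq G_{1}\ast \cdots \ast G_{n}$ and invoking the theory of limit groups over free products of \cite{Montse3}, proceeding in three stages. \emph{First, splitting off the abelian factor.} I would exploit that $G \simeq \mathbb{Z}^{m}\times H$ is equipped with the projection retraction $G \twoheadrightarrow \mathbb{Z}^{m}$. Composing the approximating homomorphisms $\Gamma \to G$ with this retraction should detect a central free abelian direct factor $\mathbb{Z}^{l}\le \Gamma$, yielding $\Gamma \simeq \mathbb{Z}^{l}\times \Lambda$ with $\Lambda$ fully residually $H$, that is, a limit group over $H$. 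The clause ``$m=0 \Rightarrow l=0$'' then drops out, since when $m=0$ the retraction $G \to \mathbb{Z}^{m}$ is trivial and no central factor is produced. The delicate point is to show that the abelian part splits as a genuine \emph{direct} factor and not merely as a central extension; I would obtain this from a careful analysis of the centralisers of the relevant central elements, using that $\mathbb{Z}^{m}$ is already a direct factor of the target $G$.

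\emph{The base case $h(\Lambda)=0$.} Here I would apply a Grushko decomposition to write $\Lambda$ as a free product of freely indecomposable groups together with infinite cyclic factors, and then identify each freely indecomposable factor $A_{t}$, which is again a height-$0$ limit group over $H$. Pushing such a factor through an approximating map $\Lambda \to H$ that is injective on a chosen generating set and applying the Kurosh subgroup theorem inside $H = G_{1}\ast\cdots\ast G_{n}$, a freely indecomposable finitely generated subgroup must lie in a conjugate of a single free factor $G_{i}$, whence $A_{t}$ is a limit group over that $G_{i}$. Collecting the pieces gives $\Lambda = A_{1}\ast\cdots\ast A_{j}$ with each $A_{t}$ a limit group over some $G_{i}$.

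\emph{The inductive step $h(\Lambda)\ge 1$, and the trivial centre clause.} The heart of the argument, and the step I expect to be the main obstacle, is to produce the cocompact action on a tree $T$ with trivial or infinite cyclic edge stabilisers and vertex groups of strictly smaller height. I would take the cyclic JSJ decomposition of $\Lambda$ relative to the factors $G_{i}$; its existence, and the restriction of the edge groups to cyclic or trivial subgroups, rest on the Rips machine for group actions on real trees (indeed on the real cubings of \cite{Montse3}) together with the control on abelian subgroups supplied by the hierarchy, while cocompactness comes from the finite presentability of $\Lambda$. That the vertex groups are limit groups over $H$ of height at most $h(\Lambda)-1$ is encoded in the height filtration, since a genuine cyclic splitting consumes one level of the tower, and making this precise is where the full machinery of \cite{Montse3} is unavoidable. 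Finally, for the clause that at least one vertex group has trivial centre I would argue by contradiction: if every vertex group had non-trivial centre, then, the edge groups being cyclic, one could collapse the decomposition and realise $\Lambda$ as an elementary group of height $0$, contradicting $h(\Lambda)\ge 1$. This is the natural analogue of the ``at least one non-abelian vertex group'' clause of Proposition~\ref{Graph tower 1}, with ``non-abelian'' upgraded to ``trivial centre'' because the free factors $G_{i}$ can themselves be non-abelian Droms RAAGs.
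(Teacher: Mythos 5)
The first thing to note is that the paper does not prove this statement at all: it is imported verbatim as \cite[Theorem 8.2]{Montse3}, exactly as Proposition~\ref{Graph tower 1} is imported from \cite{Bridson}, and no proof environment follows it. So there is no in-paper argument to measure your proposal against; the honest comparison is between your sketch and the external proof of Casals-Ruiz and Kazachkov, whose hard core (the actions on real cubings, the Rips-type machinery and the tower construction) you yourself flag as ``unavoidable''. To that extent your proposal is less a proof than an annotated explanation of why one cites \cite{Montse3} here, which is precisely what the authors do.

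Taken on its own terms, the sketch also has concrete gaps. In the base case $h(\Lambda)=0$ you push a freely indecomposable Grushko factor $A_{t}$ through an approximating map $\Lambda \to H$ that is injective on a chosen generating set; but injectivity on a generating set does not make the image isomorphic to $A_{t}$, nor freely indecomposable, so Kurosh applied to that image does not yet show that $A_{t}$ is fully residually $G_{i}$ for a single fixed $i$ --- one must run the argument uniformly over all finite subsets and all sufficiently deep approximating maps and show that the relevant free factor $G_{i}$ can be chosen independently of the subset. Likewise, the trivial-centre clause in the inductive step is not established: a graph of groups with cyclic edge groups all of whose vertex groups have non-trivial centre does not visibly ``collapse to height $0$'', and making that contradiction run requires the actual definition of height from \cite{Montse3}, which neither the paper nor your sketch supplies. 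These are exactly the points where the cited machinery does the work; if the statement is to be used as a black box the citation is the right move, and if it is to be proved, the sketch needs substantially more than is written.
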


We observe that the notion of  height in limit groups over Droms RAAGs given in Proposition \ref{Graph tower 2} is a natural generalization of the notion of height in limit groups over free groups given in 
Proposition \ref{Graph tower 1} except that, for historical reasons, the definition of limit groups over free groups of height $0$ includes more than free products of free abelian groups and allows the free factors to be surface groups with Euler characteristic at most $-2$.

In \cite{Jone}, following the paper \cite{Bridson}, the results concerning subgroups of direct products of limit groups over free groups were generalized to the case of limit groups over Droms RAAGs.

\begin{theorem}\cite[Theorem 3.1]{Jone}\label{Theorem 3.1}
If $\Gamma_{1},\dots, \Gamma_{n}$ are limit groups over Droms RAAGs and $S$ is a subgroup of $\Gamma_{1}\times \cdots \times \Gamma_{n}$ of type $FP_{n}(\mathbb{Q})$, then $S$ is virtually a direct product of limit groups over Droms RAAGs.
\end{theorem}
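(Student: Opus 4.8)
The plan is to adapt the induction of Bridson, Howie, Miller and Short \cite{Bridson} to the Droms setting, replacing their structure theory by Propositions \ref{Graph tower 1} and \ref{Graph tower 2} and using repeatedly that finitely generated subgroups of limit groups over Droms RAAGs are again of this kind \cite{Montse}. The outer induction is on the number $n$ of factors; inside the inductive step there is a secondary induction on the total complexity $\sum_i\big(l(\Gamma_i)+h(\Gamma_i)\big)$ of the factors, measured by their level and height.

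First I would carry out the standard reductions. Since $S$ is $FP_n(\mathbb{Q})$ it is finitely generated, so each projection $p_i(S)$ is a finitely generated subgroup of $\Gamma_i$ and hence itself a limit group over Droms RAAGs; replacing $\Gamma_i$ by $p_i(S)$ we may assume $S$ is subdirect. If $S\cap\Gamma_i=1$ for some $i$, then the coordinate projection to $\prod_{j\neq i}\Gamma_j$ is injective and exhibits $S$ as a subgroup of type $FP_{n-1}(\mathbb{Q})$ (a fortiori) of a product of $n-1$ limit groups over Droms RAAGs, so the inductive hypothesis on $n$ applies; hence we may assume $S$ is full. Finally, by Proposition \ref{Graph tower 2} each factor splits as $\Gamma_i=\mathbb{Z}^{l_i}\times\Lambda_i$ with $\Lambda_i$ center-free; the free abelian direct factors can be separated off and treated by the elementary theory of finitely generated abelian groups, as in \cite{Bridson,Jone}, so the essential case is the one in which every $\Gamma_i$ has trivial center.

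The heart of the argument is the asymmetric claim that, when each $\Gamma_i$ is \emph{center-free}, a full subdirect product $S$ of type $FP_n(\mathbb{Q})$ must have finite index in $\Gamma_1\times\cdots\times\Gamma_n$. I would prove this by contradiction, running the complexity induction through the hierarchy. By Propositions \ref{Graph tower 1} and \ref{Graph tower 2} one factor, say $\Gamma_n$, is either a nontrivial free product of lower-level pieces (at height zero) or acts cocompactly on a tree $T$ with trivial or infinite cyclic edge stabilizers and strictly lower-complexity vertex groups, at least one of them center-free. This splitting induces a graph-of-groups decomposition of $S$; feeding it into the Mayer--Vietoris sequence of $S$ and comparing with the analogous subdirect products sitting inside products that involve the vertex groups of $T$, one reduces the finiteness bookkeeping to strictly smaller complexity and invokes the inner inductive hypothesis, the base case being the classical Bieri--Stallings computation in a product of non-abelian free groups. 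Throughout, the free-by-(torsion-free nilpotent) structure of the factors (Proposition A) is what keeps the homology with $\mathbb{Q}$-coefficients under control, so that an infinite-index $S$ is forced to have some $H_i(S_0,\mathbb{Q})$ with $i\le n$ infinite dimensional for a suitable finite-index $S_0<S$, contradicting $FP_n(\mathbb{Q})$.

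The main obstacle is exactly this Mayer--Vietoris step: controlling the spectral sequence of the tree action in the presence of \emph{infinite cyclic} (rather than trivial) edge groups, and verifying that the center-free hypothesis on the vertex groups produces the predicted jump in homological dimension. Once the finite-index statement is secured in the center-free case, the theorem follows by reassembling the split-off abelian factors: the finite-index full subdirect product together with the free abelian direct summands exhibits $S$ as virtually a direct product $\Gamma_1'\times\cdots\times\Gamma_k'$ of limit groups over Droms RAAGs, as required.
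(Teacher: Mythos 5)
The paper does not actually prove this statement: it is quoted verbatim from \cite[Theorem 3.1]{Jone}, so there is no internal proof to compare yours against. That said, your reconstruction of the strategy of \cite{Bridson} as adapted in \cite{Jone} has the right architecture --- reduce to a full subdirect product using that finitely generated subgroups of limit groups over Droms RAAGs are again of this kind, split off the centers via Proposition \ref{Graph tower 2}, and show that in the center-free case type $FP_n(\mathbb{Q})$ forces finite index --- and the reductions in your second paragraph are essentially the ones carried out in \cite{Jone}.

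However, there is a genuine gap at exactly the point you yourself flag as ``the main obstacle''. The assertion that an infinite-index full subdirect product of center-free limit groups over Droms RAAGs has a finite-index subgroup $S_0$ with $\dim_{\mathbb{Q}}H_j(S_0,\mathbb{Q})$ infinite for some $j\le n$ is precisely Theorem \ref{Theorem 8.1} of this paper (that is, \cite[Theorem 8.1]{Jone}); it is the entire content of the result, not a step that can be disposed of by ``feeding the splitting into the Mayer--Vietoris sequence and comparing''. The bookkeeping for the tree action with infinite cyclic edge stabilizers, and the verification that center-freeness of a vertex group produces the required jump in homological dimension, occupy the bulk of \cite{Jone} (following \cite{Bridson}), and your sketch neither carries this out nor reduces it to a quotable statement. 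A second, smaller inaccuracy: you attribute the homological control to Proposition A (free-by-(torsion-free nilpotent)), but the arguments in \cite{Bridson} and \cite{Jone} control $H_*(S_0,\mathbb{Q})$ through the virtual nilpotence of the quotients $\Gamma_i/(S\cap\Gamma_i)$ and the Lyndon--Hochschild--Serre spectral sequence over that nilpotent quotient; Proposition A is a new result of the present paper, used later for Theorem B, and plays no role in Theorem \ref{Theorem 3.1}. If you want a derivation that is self-contained relative to this paper, the efficient route is to perform your reductions and then invoke Theorem \ref{Theorem 8.1} for the center-free full subdirect piece.
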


\begin{theorem}\cite[Theorem 8.1]{Jone}\label{Theorem 8.1}
Let $\Gamma_{1},\dots,\Gamma_{n}$ be limit groups over Droms RAAGs such that each $\Gamma_i$ has trivial center and let $S< \Gamma_{1}\times \cdots \times \Gamma_{n}$ be a finitely generated full subdirect product. Then either:\\[3pt]
(1) $S$ is of finite index; or\\[3pt]
(2) $S$ is of infinite index and has a finite index subgroup $S_{0}<S$ such that $H_{j}(S_{0}, \mathbb{Q})$ has infinite dimension for some $j\leq n$.
\end{theorem}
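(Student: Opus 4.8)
The plan is to prove the dichotomy in its contrapositive form by induction on $n$: assuming $S$ has infinite index in $G$, I will exhibit a finite-index subgroup $S_0<S$ with $\dim_{\mathbb{Q}}H_j(S_0,\mathbb{Q})=\infty$ for some $j\le n$. The base case $n=1$ is vacuous, since a subdirect product of $\Gamma_1$ is all of $\Gamma_1$ and hence of finite index. For the inductive step I would project off the last coordinate, $p\colon G\to G':=\Gamma_1\times\cdots\times\Gamma_{n-1}$, and set $\bar S:=p(S)$ and $N_n:=S\cap\Gamma_n=\ker(p|_S)$, producing the short exact sequence $1\to N_n\to S\to\bar S\to1$. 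Fullness gives $N_n\neq1$, and since $p$ is the identity on $\Gamma_1,\dots,\Gamma_{n-1}$ one has $\bar S\cap\Gamma_i\supseteq N_i\neq1$ and $p_i(\bar S)=\Gamma_i$ for $i<n$; thus $\bar S$ is itself a full subdirect product of $G'$, to which the inductive hypothesis applies. The trivial-centre hypothesis is essential here (a diagonal $\mathbb{Z}<\mathbb{Z}\times\mathbb{Z}$ is an infinite-index full subdirect product violating the conclusion), and I will use it through the fact that each $\Gamma_i$ is non-abelian.

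If $\bar S$ has infinite index in $G'$, the inductive hypothesis yields a finite-index $\bar S_0<\bar S$ with $H_j(\bar S_0,\mathbb{Q})$ infinite-dimensional for some $j\le n-1$. Setting $S_0:=(p|_S)^{-1}(\bar S_0)$, a finite-index subgroup of $S$, I would feed $1\to N_n\to S_0\to\bar S_0\to1$ into the Lyndon--Hochschild--Serre spectral sequence $E^2_{p,q}=H_p(\bar S_0,H_q(N_n,\mathbb{Q}))\Rightarrow H_{p+q}(S_0,\mathbb{Q})$ and show that infinite-dimensionality of $E^2_{j,0}=H_j(\bar S_0,\mathbb{Q})$ forces some $E^\infty_{p,q}$ of total degree $\le n$ to be infinite-dimensional, whence so is $H_{p+q}(S_0,\mathbb{Q})$. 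Because the extension does not split, this is not automatic: a large bottom-row class can in principle be killed by an outgoing differential, so the argument must track where the infinite-dimensionality can migrate.

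The genuinely hard case is $\bar S$ of finite index in $G'$. Passing to a finite-index subgroup I may assume $\bar S=G'$, so that $S$ surjects onto $\Gamma_1\times\cdots\times\Gamma_{n-1}$ with kernel $N_n$ and $[G:S]=[\Gamma_n:N_n]$; thus $N_n$ is a nontrivial normal subgroup of infinite index in the non-abelian limit group $\Gamma_n$. The input about $\Gamma_n$ comes from its hierarchical structure in Propositions \ref{Graph tower 1} and \ref{Graph tower 2}: inducting through the cocompact tree action (trivial or cyclic edge groups, lower-height vertex groups) one shows that such an $N_n$ is infinitely generated and, more sharply, has infinite-dimensional homology in a controlled degree. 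Viewing $S$ through the Goursat picture as the fibre product $S=G'\times_Q\Gamma_n$ over the common infinite quotient $Q=G'/(S\cap G')\cong\Gamma_n/N_n$, I would then combine the homological theory of such fibre/subdirect products with the LHS spectral sequence for $1\to N_n\to S\to G'\to1$ to locate an infinite-dimensional surviving entry of total degree at most $n$.

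The main obstacle, in both cases but acutely in the finite-index case, is controlling the coefficient module and the differentials: $H_q(N_n,\mathbb{Q})$ is a $\mathbb{Q}[G']$-module whose action factors opaquely through conjugation by the $\Gamma_n$-shadows of elements of $S$, so $H_p(G',H_q(N_n,\mathbb{Q}))$ could collapse, and even an infinite-dimensional $E^2$-entry could be annihilated before reaching $E^\infty$. I expect to resolve this by a Bieri--Neumann--Strebel--Renz ($\Sigma$-invariant) analysis of the finitely many characters involved, using that $G'$ is a product of limit groups with computable finiteness invariants and passing to a further finite-index subgroup so that the critical entry is both infinite-dimensional and a permanent cycle. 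Finally, the structural meaning of the conclusion is pinned down by Theorem \ref{Theorem 3.1}: a full subdirect product of trivial-centre factors that is homologically finite up to degree $n$ is virtually a direct product of limit groups over Droms RAAGs, and the rigidity of this factorisation under the trivial-centre hypothesis forces finite index --- so the homological induction above is exactly the device detecting the failure of that finiteness when $S$ has infinite index.
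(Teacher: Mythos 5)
You should know at the outset that this paper does not prove Theorem \ref{Theorem 8.1} at all: it is quoted verbatim from \cite[Theorem 8.1]{Jone}, so there is no in-paper proof to match your attempt against; the relevant machinery is, however, on display in Section \ref{Section 5}. Your outline has the right skeleton (induction on the number of factors, the extension $1 \to N_n \to S \to \bar S \to 1$, Lyndon--Hochschild--Serre spectral sequences), but both of its load-bearing steps are genuinely open, not merely technical. In the infinite-index branch, the inductive hypothesis produces infinite-dimensional homology for a finite-index subgroup of the \emph{quotient} $\bar S$, and transferring this to the subgroup $S_0 < S$ is exactly the hard point: in $E^2_{p,q} = H_p(\bar S_0, H_q(N_n,\mathbb{Q}))$ the differentials $d^r \colon E^r_{j,0} \to E^r_{j-r,\,r-1}$ land in entries built from $H_{r-1}(N_n,\mathbb{Q})$, and since $N_n$ is a nontrivial normal subgroup of infinite index it is infinitely generated, so these target entries can themselves be infinite-dimensional; a linear map from an infinite-dimensional space to an infinite-dimensional space can have small kernel, so nothing you say prevents $E^\infty_{j,0}$ from collapsing to finite dimension. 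You flag this yourself but supply no mechanism to rule it out. In the finite-index branch you assert without proof that such an $N_n$ has infinite-dimensional homology in a controlled degree, and you defer the permanent-cycle problem to a hoped-for BNSR $\Sigma$-invariant analysis; neither claim is established, and the BNSR route is not how this dichotomy is actually proved in \cite{Jone} or in its free-group antecedent \cite{Bridson}.

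The missing ingredients are structural, and they are precisely the ones this paper assembles when proving Theorem B via Theorem \ref{new1}: after passing to finite index, $\Gamma_i / (S \cap \Gamma_i)$ is virtually \emph{nilpotent} (\cite[Theorem 6.2]{Jone}), and by Proposition A (Corollary \ref{free-by-(torsion-free nilpotent)}) one can arrange a free normal subgroup $L_i \le \Gamma_i$ with $\Gamma_i / L_i$ torsion-free nilpotent and $L = L_1 \times \cdots \times L_n \le S$. Then $H_*(L,\mathbb{Q})$ decomposes as a sum of tensor products of the $H_1(L_i,\mathbb{Q})$, the group ring $\mathbb{Q}Q$ of the nilpotent quotient $Q = S/L$ is Noetherian --- which is what tames the ``opaque'' coefficient action you rightly worry about --- and Proposition \ref{qq32} together with Lemma \ref{qq21} (finite generation of a module containing a non-zero free $\mathbb{Q}Q$-submodule forces finite index of the acting subgroup) converts finite-dimensionality of homology into finite index of the projections. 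Your closing appeal to Theorem \ref{Theorem 3.1} is also problematic: in \cite{Jone} it belongs to the same circle of results as Theorem 8.1, so invoking it risks circularity, and even granted, ``virtually a direct product'' does not by itself force finite index without the rigidity argument you leave unstated. As it stands, the proposal is a plausible roadmap whose two essential steps are missing, and the actual proof travels a different road through nilpotent quotients and Noetherian module theory rather than spectral-sequence bookkeeping plus $\Sigma$-invariants.
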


Another result that will be used later is associated to finitely presented residually Droms RAAGs. In order to state the result, we introduce the following definition: an embedding $S  \hookrightarrow \Gamma_{0}\times \cdots \times \Gamma_{n}$ of a finitely generated group $S$ that is residually a Droms RAAG as a full subdirect product of limit groups over Droms RAAGs is \emph{neat} if $\Gamma_{0}$ is abelian (possibly trivial), $S \cap \Gamma_{0}$ is of finite index in $\Gamma_{0}$ and $\Gamma_{i}$ has trivial center for $i\in \{1,\dots,n\}$.

\begin{theorem}\cite[Theorem 10.1]{Jone} \label{Theorem 10.1}
Let $S$ be a finitely generated group that is residually a Droms RAAG. The following are equivalent:\\[3pt]
(1) $S$ is finitely presentable;\\[3pt]
(2) $S$ is of type $FP_{2}(\mathbb{Q})$;\\[3pt]
(3) $\dim H_{2}(S_{0},\mathbb{Q})$ is finite for all subgroups $S_{0}<S$ of finite index;\\[3pt]
(4) there exists a neat embedding $S \hookrightarrow \Gamma_{0}\times \cdots \times \Gamma_{n}$ into a product of limit groups over Droms RAAGs such that the image of $S$ under the projection to $\Gamma_{i}\times \Gamma_{j}$ has finite index for $0\leq i<j \leq n$;\\[3pt]
(5) for every neat embedding $S \hookrightarrow \Gamma_{0}\times \cdots \times \Gamma_{n}$ into a product of limit groups over Droms RAAGs the image of $S$ under the projection to $\Gamma_{i}\times \Gamma_{j}$ has finite index for $0\leq i<j \leq n$.
\end{theorem}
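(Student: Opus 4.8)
The plan is to prove the five conditions equivalent through the cycle $(1)\Rightarrow(2)\Rightarrow(3)\Rightarrow(5)\Rightarrow(4)\Rightarrow(1)$. Two of the links are soft. The implication $(1)\Rightarrow(2)$ is immediate, since a finitely presented group is of type $FP_2$ over any ring, in particular over $\mathbb{Q}$. For $(2)\Rightarrow(3)$, recall that $FP_2(\mathbb{Q})$ is inherited by finite-index subgroups and forces $H_i(-,\mathbb{Q})$ to be finite-dimensional for $i\le 2$; applying this to each finite-index $S_0<S$ gives $(3)$. Before the remaining links I would record, following the construction underlying \cite{Jone}, that a neat embedding of $S$ always exists: by \cite{Baumslag}, $S$ embeds as a full subdirect product of limit groups over Droms RAAGs, and using Proposition~\ref{Height} one splits the (free abelian) center off each factor, collecting these central summands together with the abelian factors into a single abelian coordinate $\Gamma_0$ with $S\cap\Gamma_0$ of finite index in $\Gamma_0$, leaving trivial-center factors $\Gamma_1,\dots,\Gamma_n$. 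Granting existence, $(5)\Rightarrow(4)$ is then automatic.

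The first substantial link is $(3)\Rightarrow(5)$. Fix any neat embedding $S\hookrightarrow\Gamma_0\times\Gamma_1\times\cdots\times\Gamma_n$. The pairs involving $\Gamma_0$ are harmless: since $S\cap\Gamma_0$ has finite index in $\Gamma_0$ and $S$ is subdirect, $p_{0,j}(S)$ contains the finite-index subgroup $(S\cap\Gamma_0)\times 1$ of $\Gamma_0\times 1$ and surjects onto $\Gamma_j$, so $[\Gamma_0\times\Gamma_j:p_{0,j}(S)]=[\Gamma_0:p_{0,j}(S)\cap\Gamma_0]<\infty$. For the pairs with $i,j\ge 1$ I would pass to the projection $\hat S:=p_{1,\dots,n}(S)$, a finitely generated full subdirect product of the trivial-center groups $\Gamma_1,\dots,\Gamma_n$. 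The kernel of $S\twoheadrightarrow\hat S$ is $A:=S\cap\Gamma_0$, which is central in $S$ (as $\Gamma_0$ is a direct factor) and free abelian of finite rank. Given a finite-index $\hat S_0<\hat S$ with preimage $S_0<S$, the Lyndon--Hochschild--Serre spectral sequence of $1\to A\to S_0\to\hat S_0\to 1$ has $H_q(A,\mathbb{Q})$ finite-dimensional in every degree; comparing $E^2_{2,0}=H_2(\hat S_0,\mathbb{Q})$ with the finite-dimensional $E^2_{0,1}=H_1(A,\mathbb{Q})$ shows that $H_2(\hat S_0,\mathbb{Q})$ is finite-dimensional whenever $H_2(S_0,\mathbb{Q})$ is. Hence $(3)$ for $S$ yields finite-dimensionality of $H_i(\hat S_0,\mathbb{Q})$ for all $i\le 2$ and all finite-index $\hat S_0<\hat S$ (the cases $i\le 1$ being automatic for finitely generated groups). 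The case $s=2$ of Theorem~B now applies to $\hat S$ and gives that $p_{i,j}(\hat S)=p_{i,j}(S)$ has finite index in $\Gamma_i\times\Gamma_j$, which is $(5)$. (The original argument in \cite{Jone} is self-contained, running the two-factor dichotomy of Theorem~\ref{Theorem 8.1} directly; Theorem~B merely streamlines this necessity direction and avoids transferring infinite-dimensionality along a possibly infinitely generated fiber.)

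The second substantial link is $(4)\Rightarrow(1)$, the genuinely constructive direction. Here I would adapt the asymmetric $1$-$2$-$3$ theorem of Bridson--Howie--Miller--Short (\cite{Bridson2}, \cite{Bridson}) to the present setting, proceeding by induction on the number of factors. The required inputs all hold for limit groups over Droms RAAGs: each $\Gamma_i$ is finitely presented (indeed of type $FP_\infty$, as recorded in the introduction), and the VSP hypothesis $(4)$ guarantees that the relevant fibers---the normal subgroups $S\cap\Gamma_i$ and, more generally, the kernels of the projections onto complementary blocks of coordinates---are finitely generated, which is exactly the hypothesis needed to feed the $1$-$2$-$3$ theorem. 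Assembling these via the fiber-product description of $S$ inside $\Gamma_0\times\cdots\times\Gamma_n$ yields a finite presentation of $S$, closing the cycle.

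The main obstacle is this last implication $(4)\Rightarrow(1)$. Once Theorem~B is in hand the necessity direction $(3)\Rightarrow(5)$ is a clean reduction, but producing an actual finite presentation from the VSP condition requires the full strength of the $1$-$2$-$3$ machinery together with the nontrivial fact that VSP forces the projection fibers to be finitely generated; verifying the latter in the Droms setting, where the factors are substantially more complicated than free groups, is where the real care is needed. By comparison, the center-splitting and spectral-sequence bookkeeping of $(3)\Rightarrow(5)$ and the construction of neat embeddings are routine adaptations of the residually free arguments.
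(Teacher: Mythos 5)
This statement is not proved in the paper at all: it is quoted from \cite[Theorem 10.1]{Jone}, and within the paper it serves as an \emph{input} --- most visibly, it is cited in the proof of Theorem B to discharge condition 5 (``$S$ virtually surjects on pairs'') of Theorem \ref{new1}. That logical role is exactly where your proposal breaks down. Your key implication $(3)\Rightarrow(5)$ invokes Theorem B with $s=2$, applied to $\hat S=p_{1,\dots,n}(S)$. But Theorem B is deduced from Theorem \ref{new1}, whose induction begins at $s=2$ with condition (5) of that theorem taken as a hypothesis (``For $s=2$, this is condition 5''), and in the proof of Theorem B that hypothesis is verified precisely by citing Theorem \ref{Theorem 10.1} --- the statement you are proving. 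So for $s=2$ Theorem B carries no content beyond the implication $(3)\Rightarrow(4)/(5)$ of the present theorem, and your argument is circular within this paper's deductive structure.

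Your parenthetical fallback --- that \cite{Jone} proves this self-containedly via the two-factor dichotomy of Theorem \ref{Theorem 8.1} --- names the correct route but omits its only hard step, the very one you flag as being ``avoided'': transferring infinite-dimensionality along a possibly infinitely generated fiber. Concretely, if some $p_{i,j}(S)$ ($i,j\geq 1$) has infinite index in $\Gamma_i\times\Gamma_j$, Theorem \ref{Theorem 8.1} yields a finite-index $T_0<p_{i,j}(S)$ with $\dim_{\mathbb{Q}}H_2(T_0,\mathbb{Q})=\infty$ (the cases $j\leq 1$ are excluded because $T_0$ is finitely generated), and to contradict (3) one must conclude that some finite-index $S_0<S$ has infinite-dimensional $H_2$. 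The kernel $K$ of $S_0\twoheadrightarrow T_0$ need not be finitely generated, and the five-term exact sequence $H_2(S_0,\mathbb{Q})\to H_2(T_0,\mathbb{Q})\to H_0(T_0,H_1(K,\mathbb{Q}))\to H_1(S_0,\mathbb{Q})\to\cdots$ only bounds $H_2(T_0,\mathbb{Q})$ by $H_2(S_0,\mathbb{Q})$ together with coinvariants that may themselves be infinite-dimensional, so infinite-dimensionality does not pass upward for free. Making this transfer work is the actual content of $(3)\Rightarrow(5)$, and it is absent from your proposal. (Note the asymmetry: your spectral-sequence reduction along the central kernel $A=S\cap\Gamma_0$ goes the \emph{unproblematic} direction, pushing finite-dimensionality down from $S_0$ to $\hat S_0$ across a finitely generated central kernel; that part is correct.)

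The soft links are fine: $(1)\Rightarrow(2)\Rightarrow(3)$, the index computation for pairs involving $\Gamma_0$, and the sketch of existence of neat embeddings are all in order. For $(4)\Rightarrow(1)$ you overcomplicate matters: there is no need for a Droms-specific analysis of projection fibers, since condition (4) says exactly that $S$ virtually surjects onto pairs inside a product of finitely presented groups (\cite[Corollary 7.8]{Montse}), so \cite[Theorem A]{Bridson2} applies verbatim and gives finite presentability of $S$ --- indeed the paper itself uses that theorem this way in Step 3 of the proof of Theorem \ref{new1}.
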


\section{Limit groups over Droms RAAGs are free-by-(torsion-free nilpotent)}

\begin{definition}
A graph is called \emph{triangulated} if it contains no induced copy of $C_{n}$, for $n\geq 4$, where $C_n$ is the circle with $n$ vertices.
\end{definition}

Recall that a RAAG is coherent precisely when its corresponding graph is triangulated, see \cite{Droms}.

\begin{theorem}{\cite[Theorem 2]{Servatius}}
If $G$ is the RAAG corresponding to the graph $X$, the commutator subgroup $G^\prime$ is free if and only if $X$ is triangulated.
\end{theorem}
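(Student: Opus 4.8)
The plan is to realize the commutator subgroup homotopically and then treat the two implications by entirely different mechanisms. Write $\phi \colon GX \to \mathbb{Z}^{V(X)}$ for the abelianization, so that $G' = (GX)' = \ker \phi$. Recall that the Salvetti complex $S_X$, the nonpositively curved cube complex with one $k$-cube for each $k$-clique of $X$, is a $K(GX,1)$, and that $\phi$ corresponds to a regular $\mathbb{Z}^{V(X)}$-cover $\widehat{S}_X \to S_X$ which is a $K(G',1)$. Since a nontrivial group is free precisely when it has cohomological dimension $\le 1$, and since free groups are exactly the groups admitting a graph as classifying space, freeness of $G'$ is equivalent to $\widehat S_X$ being homotopy equivalent to a graph; in particular $G'$ free forces $H_2(G';\mathbb{Z}) = H_2(\widehat S_X;\mathbb{Z}) = 0$.

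For the implication \emph{$X$ triangulated $\Rightarrow$ $G'$ free} I would argue by induction on $|V(X)|$ using the combinatorial structure of triangulated (chordal) graphs. By a classical theorem of Dirac a non-complete chordal graph has a separating clique $K$, giving an induced decomposition $X = X_1 \cup X_2$ with $X_1 \cap X_2 = K$ and no edges between $X_1 \setminus K$ and $X_2 \setminus K$; both $X_i$ are chordal with fewer vertices. The corresponding visual splitting of the RAAG is an amalgam $GX = GX_1 \ast_{GK} GX_2$ over the abelian group $GK \cong \mathbb{Z}^{|K|}$. Now let $G'$ act on the Bass--Serre tree $T$ of this splitting. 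The key point is that every edge stabilizer is trivial: a conjugate $g\,GK\,g^{-1}$ meets $\ker\phi$ only in elements $gkg^{-1}$ with $\phi(k)=\phi(gkg^{-1})=0$, and $\phi|_{GK}$ is the injection $\mathbb{Z}^{|K|}\hookrightarrow \mathbb{Z}^{V(X)}$, forcing $k=1$. The vertex stabilizers are the conjugates of $\ker(\phi|_{GX_i}) = (GX_i)'$, which are free by induction. A group acting on a tree with trivial edge stabilizers is the free product of representatives of its vertex stabilizers with a free group, so $G'$ is a free product of free groups and is therefore free; the base case is $X$ complete, where $G' = 1$.

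For the converse, \emph{$X$ not triangulated $\Rightarrow$ $G'$ not free}, I would localize the obstruction on an induced cycle. By hypothesis $X$ contains an induced $C_n$ with $n \ge 4$; killing the remaining generators exhibits $GC_n$ as a retract of $GX$, whence $(GC_n)'$ embeds as a subgroup of $G'$. Since subgroups of free groups are free, it suffices to prove that $(GC_n)'$ is not free, and for this I would show $H_2\big((GC_n)';\mathbb{Z}\big)\neq 0$. As $C_n$ is triangle-free, $\widehat S_{C_n}$ is a $2$-complex whose cellular chain complex of free $\Lambda$-modules, $\Lambda = \mathbb{Z}[t_1^{\pm1},\dots,t_n^{\pm1}]$, has length two: $\Lambda^{E}\xrightarrow{\partial_2}\Lambda^{V}\xrightarrow{\partial_1}\Lambda$ with $\partial_2(e_{\{i,j\}}) = (t_i-1)\,v_j-(t_j-1)\,v_i$. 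With no $3$-cells present, $H_2 = \ker\partial_2$, and writing the cyclic edges as $e_k=\{k,k+1\}$ one checks that the tuple $a_k = \prod_{j\neq k,\,k+1}(t_j-1)$ satisfies $a_k(t_{k+1}-1) = a_{k-1}(t_{k-1}-1)$ for all $k\in\mathbb{Z}/n$, hence defines a nonzero element of $\ker\partial_2$. Thus $H_2((GC_n)';\mathbb{Z})\neq 0$, so $(GC_n)'$, and therefore $G'$, is not free.

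The main obstacle is the converse direction, and specifically the case $n\ge 5$. For $C_4$ one has the familiar splitting $GC_4\cong F_2\times F_2$, whose commutator subgroup visibly contains $\mathbb{Z}^2$ and so is not free; but for $n\ge5$ the group $(GC_n)'$ contains no $\mathbb{Z}^2$ at all (an induced $C_4$ is exactly what produces a join yielding two independent commuting elements of the commutator subgroup), so the non-freeness cannot be detected through abelian subgroups and must be extracted homologically. Making the chain-level computation above rigorous, namely identifying the cellular boundary maps of $\widehat S_{C_n}$ via Fox calculus and verifying that the exhibited cycle is genuinely nonzero over $\mathbb{Z}$, is the technical heart of the argument; everything else reduces to standard facts about visual RAAG decompositions and Bass--Serre theory.
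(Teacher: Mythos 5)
This statement is imported by the paper as \cite[Theorem 2]{Servatius} (Droms--Servatius--Servatius) with no proof given, so there is nothing internal to compare against; judging your argument on its own terms, it is correct and complete. Your ``triangulated $\Rightarrow G'$ free'' direction is essentially the classical one: Dirac's clique-separator theorem gives the visual splitting $GX = GX_1 \ast_{GK} GX_2$, the abelianization is injective on (conjugates of) $GK$ so $G'$ acts on the Bass--Serre tree with trivial edge stabilizers and, by normality, with vertex stabilizers conjugate to $(GX_i)'$, and a (possibly infinite) free product of free groups is free. Where you genuinely diverge from the source is the converse: Droms--Servatius--Servatius detect non-freeness of $(GC_n)'$ by exhibiting closed surface subgroups (for $C_4$ via $F_2\times F_2 \supset \mathbb{Z}\times\mathbb{Z}$ in the commutator subgroup, and for $n\ge 5$ via their genus-$1+(n-4)2^{n-3}$ surface construction), whereas you kill the problem homologically: the Salvetti complex of $C_n$ is $2$-dimensional, so $H_2$ of the $\mathbb{Z}^n$-cover is $\ker\partial_2$ in a free $\Lambda$-complex, and your explicit cycle $a_k=\prod_{j\ne k,k+1}(t_j-1)$ is visibly a nonzero kernel element since $\Lambda$ is a domain (both incidences at $v_k$ give $\prod_{j\ne k}(t_j-1)$). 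This buys you a short, uniform, and fully self-contained verification that avoids the surface-group construction entirely, at the cost of not producing the geometrically interesting subgroup; your correctly flagged point that for $n\ge 5$ no $\mathbb{Z}^2$ obstruction exists is exactly why some such homological (or surface-theoretic) input is unavoidable. The only cosmetic caveat is that you invoke Stallings--Swan where you only need the trivial direction ($G'$ free $\Rightarrow H_2(G')=0$).
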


\begin{lemma}
 Coherent  RAAGs are free-by-(free abelian).
\end{lemma}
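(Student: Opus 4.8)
The plan is to exhibit the commutator subgroup $G'$ as the required free normal subgroup, so that the abelianization does the rest. Let $G = GX$ be a Droms RAAG. By the theorem of Servatius recalled immediately above, $G'$ is free as soon as the defining graph $X$ is triangulated, so the first step is to check that the Droms condition guarantees this. By definition a Droms graph contains no induced square (that is, no induced $C_4$) and no induced straight-line path with three edges (that is, no induced $P_4$). I would then observe that any induced cycle $C_n$ with $n \geq 5$ already contains an induced $P_4$: choosing four consecutive vertices $v_1, v_2, v_3, v_4$ along the cycle, one has $v_1 \sim v_2 \sim v_3 \sim v_4$ with $v_1 \not\sim v_3$, $v_2 \not\sim v_4$ and (since $n \geq 5$) $v_1 \not\sim v_4$, so these four vertices induce a path with three edges. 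Hence forbidding $C_4$ and $P_4$ forbids every induced $C_n$ with $n \geq 4$, so $X$ is triangulated and $G'$ is free. (Indeed this conclusion is already recorded in the text preceding the statement.)

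The second step is to identify the quotient $G/G'$. Since $G$ is generated by the vertex set $V(X)$ and all of its defining relations are commutators, abelianizing simply imposes commutativity on every pair of generators and adds no further relations; therefore
\[ G/G' = G^{\mathrm{ab}} \cong \mathbb{Z}^{|V(X)|}, \]
which is free abelian of finite rank.

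Combining the two steps, the short exact sequence
\[ 1 \longrightarrow G' \longrightarrow G \longrightarrow G/G' \longrightarrow 1 \]
displays $G$ as an extension of the free abelian group $G/G'$ by the free group $G'$; that is, $G$ is free-by-(free abelian), as claimed.

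I do not expect a genuine obstacle here. The only point that needs any care is the passage from the defining Droms condition to triangulatedness of $X$, and this reduces to the elementary observation about induced cycles given above; everything else is either the quoted theorem of Servatius or the standard computation of the abelianization of a right-angled Artin group.
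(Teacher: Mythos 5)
Your proposal is correct and follows essentially the same route as the paper: invoke the Servatius criterion to see that $G'$ is free (using that the Droms condition on the defining graph forces it to be triangulated), note that the abelianization is $\mathbb{Z}^{|V(X)|}$, and read off the conclusion from the short exact sequence $1 \to G' \to G \to G/G' \to 1$. The only difference is that you spell out the elementary graph-theoretic step (an induced $C_n$ with $n\geq 5$ contains an induced path with three edges) that the paper leaves implicit.
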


\begin{proof}
Let $G$ be a  coherent  RAAG corresponding to the graph $X$. Then, $G^\prime$ is free, the abelianization is $\mathbb{Z}^n$ where $n$ is the number of vertices in the graph $X$ and there is a short exact sequence
\[
1 \to  G^\prime \to G \to \mathbb{Z}^n \to 1.
\]
\end{proof}

A filtration $\{G_{i}\}_{i\geq 1}$ of normal subgroups of a group $G$ has \emph{Property (1)} if $G \slash G_{i}$ is torsion-free and $\bigcap_{i\geq 1} G_{i}=1$, and it has \emph{Property (2)} if $\bigcap_{i\geq 1} G_{i}=1$ and for each finitely generated abelian subgroup $M$ of $G$ there is $i=i(M)$ such that $G_{i} \cap M=1$.

\begin{lemma}\label{Lemma 1}
Let $G$ be a group and $\{G_{i}\}_{i\geq 1}$ be a filtration of normal subgroups of $G$. If $\{G_{i}\}_{i\geq 1}$ has Property (1), it also has Property (2).
\end{lemma}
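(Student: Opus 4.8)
The plan is to keep $\bigcap_i G_i = 1$ (which is common to both properties) and concentrate on producing, for a given finitely generated abelian $M \le G$, a single index $i(M)$ with $G_{i(M)} \cap M = 1$. The torsion-freeness of the quotients will be used in two distinct ways, and identifying the second of these is the crux.

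First I would observe that Property (1) forces $G$ itself to be torsion-free: if $g \in G$ has finite order, then its image in each torsion-free quotient $G/G_i$ is trivial, so $g \in G_i$ for every $i$ and hence $g \in \bigcap_i G_i = 1$. Consequently any finitely generated abelian subgroup $M \le G$ is free abelian of finite rank, say $M \cong \mathbb{Z}^r$. Next comes the key step: for each $i$ the subgroup $M \cap G_i$ is \emph{pure} (i.e.\ a direct summand) in $M$. Indeed, since $G_i$ is normal, the second isomorphism theorem gives an embedding $M/(M \cap G_i) \cong M G_i / G_i \hookrightarrow G/G_i$, so $M/(M \cap G_i)$ is torsion-free; and a subgroup $N$ of $\mathbb{Z}^r$ with $\mathbb{Z}^r / N$ torsion-free is precisely a direct summand.

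With purity in hand, I would exploit finite-dimensionality. Because $\{G_i\}$ is a descending filtration, $\{M \cap G_i\}_i$ is a descending chain of pure subgroups of $M \cong \mathbb{Z}^r$. Passing to $\mathbb{Q}$-spans, $V_i \coloneqq \mathbb{Q}\cdot (M \cap G_i) \subseteq M \otimes_{\mathbb{Z}} \mathbb{Q} \cong \mathbb{Q}^r$, yields a descending chain of subspaces of a finite-dimensional $\mathbb{Q}$-vector space, which must stabilize: $V_i = V_{i_0}$ for all $i \ge i_0$. Since a pure subgroup $N$ of $\mathbb{Z}^r$ is recovered from its span by $N = (\mathbb{Q} N) \cap \mathbb{Z}^r$, stabilization of the spans forces $M \cap G_i = M \cap G_{i_0}$ for every $i \ge i_0$. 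Finally,
\[
M \cap G_{i_0} = \bigcap_{i \ge i_0}\, (M \cap G_i) = M \cap \bigcap_{i} G_i = M \cap 1 = 1,
\]
so $i(M) \coloneqq i_0$ witnesses Property (2).

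The step I expect to be the main obstacle, and the one where Property (1) does real work, is the purity claim. A descending chain of subgroups of $\mathbb{Z}^r$ with trivial intersection need \emph{not} contain the trivial subgroup at any finite stage (for example $2^i\mathbb{Z}$ in $\mathbb{Z}$), so the naive intersection argument fails. The torsion-freeness of the quotients is exactly what rules this pathology out: it makes each $M \cap G_i$ pure, which is what lets the finite-dimensional stabilization conclude $M \cap G_{i_0} = 1$ rather than merely that the chain becomes constant at some small nonzero subgroup.
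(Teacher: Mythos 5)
Your proof is correct and follows essentially the same route as the paper's: both arguments use that $M/(M\cap G_i)$ embeds in the torsion-free quotient $G/G_i$ (hence $M\cap G_i$ is pure in $M$), then use finiteness of the rank/Hirsch length of $M$ to force the descending chain $M\cap G_i$ to stabilize, and conclude from $\bigcap_i G_i = 1$ that the stable value is trivial. Your write-up merely makes explicit the purity and stabilization steps that the paper compresses into the phrase ``as $M$ has finite Hirsch length.''
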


\begin{proof}
Let $M$ be a finitely generated abelian subgroup of $G$. Since $\bigcap_{i\geq 1} G_{i}=1$, there is $i$ such that $G_{i} \cap M$ is not $M$. The group $M \slash (M \cap G_{i})$ embeds in $G \slash G_{i}$, so it is non-trivial and torsion-free, that is, a finite rank free abelian group. As $M$ has finite Hirsch length, we deduce that $M \cap G_{j}$ is trivial for sufficiently large $j$.
\end{proof}

\begin{lemma}\label{Lemma 2}
Let $H$ be a group, let $G$ be  $\mathbb{Z}^m \times H$ for some $m\in \mathbb{N}$ and let us denote the projection map $G \mapsto H$ by $p$. Suppose that $\{ G_{i} \}_{i\geq 1}$ is a filtration of normal subgroups of $G$ with Property (2). Then,  $\{ p(G_{i}) \}_{i\geq 1}$ is a filtration of normal subgroups of $H$ with Property (2).
\end{lemma}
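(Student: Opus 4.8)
The plan is to verify in turn the three things that constitute "being a filtration with Property (2)": that $\{p(G_i)\}_{i\geq 1}$ is a descending chain of normal subgroups of $H$, that each finitely generated abelian subgroup of $H$ is eventually trivially intersected by the chain, and finally that $\bigcap_{i\geq 1} p(G_i)=1$. First I would dispose of the normality and nesting as routine bookkeeping, then treat the abelian condition as the heart of the matter, and deduce the trivial-intersection condition from it.

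For the routine part: since $p$ is surjective and each $G_i$ is normal in $G$, the image $p(G_i)$ is normal in $H=p(G)$. Concretely, identifying $H$ with $\{1\}\times H\leq G$, for $h\in H$ and $g\in G_i$ one has $(1,h)\,g\,(1,h)^{-1}\in G_i$, and applying $p$ gives $h\,p(g)\,h^{-1}\in p(G_i)$; nesting $p(G_{i+1})\subseteq p(G_i)$ is immediate from $G_{i+1}\subseteq G_i$.

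The key step is the abelian condition. Given a finitely generated abelian subgroup $N\leq H$, the idea is to \emph{fatten} it in the $\mathbb{Z}^m$ direction: set $M:=\mathbb{Z}^m\times N\leq \mathbb{Z}^m\times H=G$, which is again finitely generated and abelian. Property (2) for $\{G_i\}$ then supplies an index $i$ with $G_i\cap M=1$, and I claim the same $i$ works for $N$, that is, $p(G_i)\cap N=1$. Indeed, if $n\in p(G_i)\cap N$, choose $g\in G_i$ with $p(g)=n$ and write $g=(z,n')$ with $z\in\mathbb{Z}^m$ and $n'\in H$; then $n'=p(g)=n\in N$, so $g\in\mathbb{Z}^m\times N=M$, whence $g\in G_i\cap M=1$ and $n=p(g)=1$. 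The reason for enlarging $N$ to $M$ is exactly that the $\mathbb{Z}^m$-component of $g$ is uncontrolled, so to force $g$ into the distinguished subgroup one must absorb that entire factor; trying to lift $N$ itself would not close up.

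Finally, the trivial-intersection condition follows formally from the abelian condition applied to cyclic subgroups: if some $n\neq 1$ lay in $\bigcap_{i\geq 1} p(G_i)$, then applying the previous paragraph to $N=\langle n\rangle$ (cyclic, hence finitely generated abelian) produces an $i$ with $p(G_i)\cap\langle n\rangle=1$, contradicting $n\in p(G_i)\cap\langle n\rangle$. I do not expect a genuine obstacle; the only delicate point is keeping the direct-product coordinates straight, and the single idea that makes the argument go through is choosing $M=\mathbb{Z}^m\times N$ rather than attempting to lift $N$ directly into $G$.
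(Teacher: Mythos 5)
Your proof is correct and follows essentially the same route as the paper: both arguments handle the finitely generated abelian subgroup $N\leq H$ by enlarging it to $\mathbb{Z}^m\times N\leq G$ (the paper writes this as $AC$ with $A=\mathbb{Z}^m$) and applying Property (2) of $\{G_i\}$ to that enlarged subgroup, then pushing the conclusion back down through $p$. Your element-wise verification of $p(G_i)\cap N=1$ is a slightly more direct phrasing of the paper's subgroup-product computation, and your deduction of $\bigcap_i p(G_i)=1$ from the cyclic case is a valid way to cover the remaining clause of Property (2).
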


\begin{proof}
Let us denote $\mathbb{Z}^m $ by $A$ and let us define $H_{i}$ to be $p(G_i)$.  Thus, $AG_{i}= AH_{i}$.
Let $C$  be a finitely generated abelian subgroup of $H$. Note that the group $A(H_{i}\cap C)$ is contained in \[AH_{i}\cap AC= AG_{i} \cap AC= A(G_{i} \cap AC).\] But since the filtration $\{G_{i}\}$ has Property (2), there is $i_{1}= i(AC)$ such that $G_{i_{1}} \cap AC=1$. To sum up, since $A \cap C \subseteq A \cap H=1$, $A(H_{i}\cap C)$ being equal to $A$ implies that $H_{i} \cap C=1$ for  $i\geq i_{1}$.
\end{proof}

\begin{proposition} \label{prop-free1}
Let $G$ be a Droms RAAG and $\{G_{i}\}_{i\geq 1}$ be a filtration of normal subgroups of $G$ with Property (2). Then, for sufficiently large $i_{0}$ the group $G_{i_{0}}$ is free.
\end{proposition}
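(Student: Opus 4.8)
The plan is to induct on the level $l(G)$ of the Droms RAAG $G$, using the $Z\ast$-closure description of Definition \ref{Definition hierarchy}. For the base case $l(G)=0$ the group $G$ is a finitely generated free group, so by the Nielsen--Schreier theorem every subgroup, and in particular each $G_i$, is free; there is nothing to prove.

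For the inductive step, write $G \simeq A \times H$ with $A = \mathbb{Z}^m$ and $H = G_1 \ast \cdots \ast G_n$, where each free factor $G_j$ is a Droms RAAG of level at most $l(G)-1$. First I would reduce from $G$ to $H$. Let $p\colon G \to H$ be the projection and set $H_i = p(G_i)$. Applying Property (2) to the finitely generated abelian subgroup $A$ produces an index $i_1$ with $G_{i_1}\cap A = 1$; as the filtration is decreasing, $G_i \cap A = 1$ for all $i \geq i_1$, so $p$ restricts to an isomorphism $G_i \xrightarrow{\sim} H_i$ for every such $i$. By Lemma \ref{Lemma 2} the filtration $\{H_i\}$ of normal subgroups of $H$ again has Property (2), so it suffices to prove that $H_{i_0}$ is free for large $i_0$.

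Now I would analyse $H_i \trianglelefteq H$ through the Kurosh subgroup theorem, which writes $H_i$ as a free product of a free group with subgroups of the form $H_i \cap g\,G_j\,g^{-1}$. Since a free product of free groups is free, it is enough to show each such intersection is free for large $i$. The main obstacle is that the Kurosh decomposition may involve infinitely many conjugates $g G_j g^{-1}$, so a naive ``take the maximum over all factors'' argument fails. The key point that removes this obstacle is the normality of $H_i$ in $H$: it gives $H_i \cap g\,G_j\,g^{-1} = g\,(H_i \cap G_j)\,g^{-1}$, so for each fixed $j$ all the conjugate factors are isomorphic to the single subgroup $H_i \cap G_j$. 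Hence only the $n$ groups $H_i \cap G_1, \dots, H_i\cap G_n$ need to be understood.

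Finally, for each $j$ the family $\{H_i \cap G_j\}_i$ is a decreasing filtration of normal subgroups of $G_j$ (normality being inherited from $H_i \trianglelefteq H$), and it inherits Property (2): indeed $\bigcap_i (H_i\cap G_j) \subseteq \bigcap_i H_i = 1$, and any finitely generated abelian $M \leq G_j \leq H$ meets some $H_i$ trivially by Property (2) of $\{H_i\}$. Since $l(G_j) \leq l(G)-1$, the inductive hypothesis yields an index $i(j)$ beyond which $H_i \cap G_j$ is free. Taking $i_0 = \max\{i_1, i(1), \dots, i(n)\}$, a maximum over a finite set, every Kurosh factor of $H_{i_0}$ is free for $i \geq i_0$, so $H_{i_0}$ is free and therefore $G_{i_0} \cong H_{i_0}$ is free, completing the induction.
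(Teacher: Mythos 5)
Your proof is correct and follows essentially the same route as the paper's: induction on the level, killing the $\mathbb{Z}^m$ factor via Property (2) and Lemma \ref{Lemma 2}, then applying the Kurosh decomposition of the normal subgroup $H_i$ of the free product and the inductive hypothesis to each intersection $H_i\cap G_j$. Your explicit remark that normality collapses the possibly infinitely many Kurosh factors to conjugates of the finitely many groups $H_i\cap G_j$ is a point the paper leaves implicit, but the argument is the same.
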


\begin{proof}
Let $G$ be a Droms RAAG of level $l(G)$. If $l(G)=0$, then $G$ is a free group, so the statement clearly holds. If $l(G)\geq 1$, then $G$ equals $\mathbb{Z}^m \times (K_{1}\ast \cdots \ast K_{k})$ for some $m\in \mathbb{N} \cup \{0\}$ and $K_{1},\dots,K_{k}$ Droms RAAGs of level less than $l(G)$.

Let us denote $\mathbb{Z}^m$ and $K_{1}\ast \cdots \ast K_{k}$ by $A$ and $H$, respectively, and the projection map $G \mapsto H$ by $p$. By hypothesis, there is $i_{1}=i(A)$ such that $A \cap G_{i_{1}}=1$. 

Let us define $H_{i}$ to be $p(G_i)$. From Lemma \ref{Lemma 2} we get that there is $i_{2}\geq 1$ such that $\{H_{i}\}_{i\geq i_{2}}$ is a filtration of $H$ with Property (2), so for $i_{3} = \max \{i_{1},i_{2}\}$ the filtration $\{H_{i}\}_{i\geq i_{3}}$ of normal subgroups of $H$ has Property (2) and $G_{i} \simeq H_{i}$. 

The group $H_{i}$ is a free product of conjugates of $H_{i} \cap K_{j}$ for $j\in \{1,\dots,k\}$ and a free group. For $j\in \{1,\dots, k\}$, $\{ H_{i} \cap K_{j}\}_{i\geq i_{3}}$ is a filtration of normal subgroups of $K_{j}$ with Property (2), so by inductive hypothesis, there is  $s_{j}$ such that $H_{s_{j}} \cap K_{j}$ is free. In conclusion, by taking $s_{0}$ to be $\max \{s_{1},\dots,s_{k}\}$, $H_{s_{0}}$ is a free group.
\end{proof}

\begin{theorem}\label{Theorem 1}
Let $G$ be a Droms RAAG with filtration $\{G_{i}\}_{i \geq 1}$ of normal subgroups such that $G \slash G_{i}$ is torsion-free and $\bigcap_{i\geq 1} G_{i}=1$. Then, for sufficiently large $i_{0}$ the group $G_{i_{0}}$ is free.
\end{theorem}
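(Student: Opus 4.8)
The plan is to observe that the hypotheses placed on $\{G_i\}_{i\geq 1}$ are, word for word, what has been called Property (1), and then to chain together the two preparatory results already established. First I would note that since $G/G_i$ is torsion-free for every $i$ and $\bigcap_{i\geq 1} G_i = 1$, the filtration $\{G_i\}_{i\geq 1}$ has Property (1) by definition.

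Next, I would invoke Lemma \ref{Lemma 1}, which upgrades Property (1) to Property (2): for every finitely generated abelian subgroup $M \leq G$ there is an index $i(M)$ with $G_{i(M)} \cap M = 1$. Conceptually this step works because $M/(M\cap G_i)$ embeds in the torsion-free quotient $G/G_i$ and is therefore free abelian of finite rank, while $M$ itself has finite Hirsch length; hence the intersection $M \cap G_i$ must become trivial once $i$ is large enough.

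Finally, with Property (2) in hand I would apply Proposition \ref{prop-free1} directly to $G$ together with the filtration $\{G_i\}_{i\geq 1}$, concluding that $G_{i_0}$ is free for all sufficiently large $i_0$. This completes the argument.

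Since all of the genuine content is carried by Lemma \ref{Lemma 1} and Proposition \ref{prop-free1}, there is no real obstacle in the present statement: the only thing one has to check is that the stated hypothesis coincides with Property (1), which it does verbatim. The theorem is thus simply the specialization of Proposition \ref{prop-free1} to filtrations arising from torsion-free quotients, the bridge between the two being the implication Property (1) $\Rightarrow$ Property (2) supplied by Lemma \ref{Lemma 1}.
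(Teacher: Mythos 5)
Your argument is correct and is exactly the paper's own proof: the hypotheses are Property (1) verbatim, Lemma \ref{Lemma 1} upgrades this to Property (2), and Proposition \ref{prop-free1} then yields the conclusion. No gaps.
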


\begin{proof} It follows from Lemma~\ref{Lemma 1} and Proposition~\ref{prop-free1}.
\end{proof}

In order to show that limit groups over  Droms RAAGs are free-by-(torsion-free nilpotent), we will show that ICE groups over Droms RAAGs are free-by-(torsion-free nilpotent). A limit group over a Droms RAAG is a finitely generated subgroup of an ICE group over a Droms RAAG, so it will also be free-by-(torsion-free nilpotent).

Let us start recalling the definition of ICE groups over Droms RAAGs given in \cite{Montse}.

\begin{definition} \label{defIce}
Let $H$ be a group and $Z \subseteq H$ the centralizer of an element. Then, the group $G= H \ast_{Z} (Z \times \mathbb{Z}^n)$ is said to be obtained from $H$ by an \emph{extension of a centralizer}. An \emph{ICE group over $H$} is a group obtained from $H$ by applying finitely many times the extension of a centralizer construction.  In the classic limit groups, the standard condition that is asked is that the abelian subgroup $Z$ is maximal in one of the vertex groups and indeed this assures 2-acylindricity of the action. However, if one asks $Z$ to be not just abelian but a centralizer of an element, this maximality condition is satisfied.
\end{definition}

\begin{theorem}[\cite{Montse}] \label{Limit-subgroup-ICE}
All ICE groups over Droms RAAGs are limit groups over Droms RAAGs. Moreover, a group is a limit group over a Droms RAAG if and only if it is a finitely generated subgroup of an ICE group over a Droms RAAG.
\end{theorem}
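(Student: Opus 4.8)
The plan is to read the statement as two assertions: (A) every ICE group over a Droms RAAG is a limit group over Droms RAAGs, and (B) the equivalence ``limit group $\iff$ finitely generated subgroup of an ICE group.'' The backward implication of (B) comes essentially for free once (A) is known. An ICE group is finitely generated (a Droms RAAG is finitely generated, and each extension of a centralizer adjoins only a finitely generated free abelian factor), so by (A) it is finitely generated and fully residually Droms RAAGs, i.e.\ a limit group; and a finitely generated subgroup $K$ of a fully residually Droms RAAGs group $G$ is again fully residually Droms RAAGs, since any separating homomorphism $G \to D$ with $D$ a Droms RAAG restricts to one on $K$. Thus the real content splits into proving (A) and the forward (embedding) implication of (B).

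For (A) I would induct on the number of centralizer extensions used to build the ICE group, the base case being a Droms RAAG, which is fully residually itself via the identity. The inductive step reduces to showing that a single extension $G = H \ast_Z (Z \times \mathbb{Z}^n)$, with $Z = C_H(z_0)$, is \emph{fully residually} $H$: composing a separating map $G \to H$ with a map $H \to D$ supplied by the inductive hypothesis gives a separating map into a Droms RAAG. To separate a finite set of nontrivial elements I would use the family of retractions $\psi_{\bar t} \colon G \to H$ determined by $\psi_{\bar t}|_H = \mathrm{id}_H$ and $\psi_{\bar t}(\bar k) = z_0^{\langle \bar t, \bar k\rangle}$ for $\bar k \in \mathbb{Z}^n$; these are well defined because $z_0^{m}$ centralizes $Z$ for every $m$, so the two partial maps agree on the amalgamated subgroup $Z$ and the universal property of the amalgam applies. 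The crux is the classical \emph{big powers} argument: writing each target element in reduced form with respect to the amalgam (equivalently, tracking its action on the Bass--Serre tree of $G$), one checks that for all sufficiently spread-out choices of $\bar t$ no cancellation collapses a nontrivial reduced word to the identity, using that $z_0$ has infinite order and that $\langle z_0\rangle$ satisfies the big powers condition in $H$. Separating the finitely many pairwise quotients $s_i s_j^{-1}$ in addition to the $s_i$ upgrades nontriviality to injectivity on the prescribed set. I expect this reduced-word bookkeeping to be the only genuinely technical point in (A), though routine by the standards of the subject.

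The forward implication of (B) --- every limit group over Droms RAAGs embeds into an ICE group over Droms RAAGs --- is the genuine obstacle, and I would attack it by induction on the hierarchy of Propositions~\ref{Graph tower 1} and~\ref{Graph tower 2}, nested inside an induction on the level $l(G)$ of the coefficient Droms RAAG. At height $0$ a limit group is a free product of limit groups over the lower-level factors (together with free abelian and, in the free-coefficient case, surface factors), so one embeds each factor by the outer induction and then assembles: a free product of finitely generated subgroups of ICE groups embeds into a single ICE group built over the free product of the bases. For height $\geq 1$ one has a cocompact action on a tree with trivial or infinite-cyclic edge groups and lower-height vertex groups, and the task is to realise the amalgamations and HNN extensions along these cyclic edge groups as sub-structures of iterated extensions of centralizers; this is where one invokes the theory of limit groups over free products from~\cite{Montse3} and the constructive description of limit groups over Droms RAAGs in~\cite{Montse}. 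The main difficulty, and the reason this direction rather than (A) is the heart of the matter, is controlling the interaction between the abelian pieces (the $\mathbb{Z}^m$ direct factors and abelian vertex groups) and the cyclic edge groups, so that the graph-of-groups decomposition embeds into a sequence of centralizer extensions without disturbing the embedding already obtained at lower levels. This last point is precisely the content imported wholesale from~\cite{Montse}.
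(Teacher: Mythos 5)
The first thing to say is that the paper does not prove this statement at all: Theorem~\ref{Limit-subgroup-ICE} is imported verbatim from \cite{Montse}, and the present authors' entire ``proof'' is the citation. So there is no in-paper argument to compare yours against; what you have written is an outline of how the result is established in the cited reference. Judged on those terms, your global architecture is right --- the backward implication of (B) is indeed formal once (A) is known (finitely generated subgroups of fully residually Droms RAAGs groups are again fully residually Droms RAAGs, exactly as you say), (A) is proved by induction on the number of centralizer extensions using a family of retractions onto the base, and (B) forward is the hard embedding theorem.

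That said, as a blind proof your write-up has two genuine gaps. First, in step (A) you describe the big-powers bookkeeping as ``routine by the standards of the subject,'' but the classical argument you are invoking is calibrated for free groups (or, more generally, CSA/commutative-transitive groups with malnormal cyclic centralizers). The paper itself points out that limit groups over Droms RAAGs are \emph{not} commutative transitive in general, and in a Droms RAAG the centralizer $Z=C_H(z_0)$ is typically a direct product $\mathbb{Z}^m\times C$ rather than cyclic and is far from malnormal; controlling cancellation when you insert powers $z_0^{\langle \bar t,\bar k\rangle}$ between syllables therefore requires the ``big powers condition'' to be established for these groups, which is a substantive part of \cite{Montse} (and of \cite{Montse3} for free products), not a routine adaptation. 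Second, the forward implication of (B) --- which you correctly identify as the heart of the matter --- is not proved in your proposal; it is attributed ``wholesale'' to \cite{Montse}. Since that is exactly what the paper under review also does, your account is an accurate description of the provenance of the theorem, but it is not a self-contained proof: the two technical pillars (discrimination of centralizer extensions over non-commutative-transitive bases, and the embedding of an arbitrary limit group into a graph tower/ICE group) are precisely the content of the reference and remain unproved here.
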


\begin{remark}\label{Remark level}
In \cite[Lemma 7.5]{Montse}  it is proved that any ICE over a Droms RAAG can be obtained by first extending centralizers of central elements, then extending centralizers of elliptic elements and finally considering extensions of centralizers of hyperbolic elements.

If $G$ is a Droms RAAG such that $l(G)=0$, then the centralizer of any non-trivial element  $t \in G$ is the infinite cyclic group  $\langle t_1 \rangle$, where   $\langle t \rangle \subseteq \langle t_1 \rangle$ and $[\langle t_1 \rangle : \langle t \rangle] $ is maximal possible.

 Now let $G$ be a Droms RAAG of the form $\mathbb{Z}^t \times (G_{1}\ast \cdots \ast G_{f})$ for $t,f\in \mathbb{N} \cup \{0\}$ such that $l(G_{i}) \leq l(G)-1$ for $i\in \{1,\dots,f\}$, let $g\in G$ and let us denote the centralizer of $g$ in $G$ by $C_{G}(g)$. 

Firstly, by the result \cite[Lemma 7.5]{Montse} mentioned above, we shall start extending centralizers of central elements. If $g\in \mathbb{Z}^t$, then $C_{G}(g)= G$, so the extension of a centralizer construction gives \[ G \ast_{C_{G}(g)} (C_{G}(g) \times \mathbb{Z}^k)\simeq (\mathbb{Z}^t \times \mathbb{Z}^k) \times (G_{1}\ast \cdots \ast G_{f}),\]
and we again get a Droms RAAG. That is, extending centralizers of central elements in Droms RAAGs leads to another Droms RAAG. Suppose that after extending centralizers of central elements, we have constructed the group $\mathbb{Z}^m \times (G_1 \ast \cdots \ast G_n)$.

Secondly, we extend centralizers of elliptic elements, so we assume that $g \in \mathbb{Z}^m g_0$, $g_0 \neq 1$ and  $g_0$ is an elliptic element in $G_{1}\ast \cdots \ast G_{n}$. Then, $g_0 \in \bigcup_{1 \leq i \leq n} G_i^G$, $C_G(g) = C_G( g_0)$ and we can assume without loss of generality that $g = g_0$. Since extensions of centralizers are conjugate invariant, we can assume  that $g \in G_{i}$ for some $i\in \{1,\dots,n\}$. Thus, $$C_{G}(g)= \mathbb{Z}^m \times C_{G_{i}}(g).$$
Therefore, 
\[ G \ast_{C_{G}(g)} (C_{G}(g) \times \mathbb{Z}^k)\simeq \]\[\mathbb{Z}^m \times \Big{(}G_{1} \ast \cdots \ast G_{i-1} \ast \big{(}G_{i} \ast_{C_{G_{i}}(g)} (\mathbb{Z}^k \times C_{G_{i}}(g))\big{)} \ast G_{i+1} \ast \cdots \ast G_{n} \Big{)},\]
so we get a group of the form $H\colon = \mathbb{Z}^{m} \times (H_1 \ast \cdots \ast H_n)$. Elliptic elements of $H$ lie again in $ \mathbb{Z}^{m} g_0$ with $g_0 \neq 1$ and $g_0$ elliptic in $H_1 \ast \cdots \ast H_n$. Extending centralizers of elliptic elements in groups of this form thus ends up giving again a group of the form $\mathbb{Z}^l \times (K_1 \ast \cdots \ast K_v)$.

 Thirdly, we shall extend the centralizer of a hyperbolic element in a group of the form $K \colon = \mathbb{Z}^l \times (K_1 \ast \cdots \ast K_v)$. Hence, assume that $g \in \mathbb{Z}^l g_0$, $g_0 \neq 1$ and  $g_0$ is a hyperbolic element in $K_{1}\ast \cdots \ast K_{v}$. Then, $g_0 \in  (K_{1}\ast \cdots \ast K_{v}) \setminus (\bigcup_{1 \leq i \leq v} K_i^K)$, $C_K(g) = C_K( g_0)$ and we can assume without loss of generality that $g = g_0$. Hence, $C_{K}(g)=  \mathbb{Z}^l \times {\langle g_1 \rangle}  \simeq \mathbb{Z}^{l+1}$,  where $\langle g \rangle \subseteq \langle g_1 \rangle$ and $[\langle g_1 \rangle : \langle g \rangle] $ is maximal possible, and
\[ K \ast_{C_{K}(g)} (C_{K}(g) \times \mathbb{Z}^k)\simeq \Big{(}
\mathbb{Z}^l \times (K_{1}\ast \cdots \ast K_{v})\Big{)} \ast_{\mathbb{Z}^l \times  \langle g_1 \rangle} (\mathbb{Z}^l \times  \langle  g_1 \rangle \times \mathbb{Z}^k) \simeq \] \[ 
\mathbb{Z}^l \times\Big{(} (K_{1}\ast \cdots \ast K_{v}) \ast_{\mathbb{Z}} \mathbb{Z}^{k+1}  \Big{)}. \]
In this case, we obtain a group of the form $H \colon = \mathbb{Z}^{l} \times (A \ast_{B} C)$. Hyperbolic elements of this group are elements $g\in \mathbb{Z}^{l}g_0$ with $g_0$ hyperbolic in $A\ast_{B} C$, that is, $g_0 \in (A\ast_B C) \setminus (A^H \cup C^H)$ and extending centralizers of hyperbolic elements in groups of this form yields a group with the same structure. 

Thus, the class of ICE groups over Droms RAAGs can be given the following hierarchical structure. 
\end{remark}

\begin{definition} 
Assume that $G$ is a Droms RAAG of level 0, that is, $G$ is a free group. An ICE group over $G$ of  altitude  0 is precisely $G$. An ICE group over $G$ of   altitude  $k\geq 1$ is an amalgamated free product over $\mathbb{Z}^{n_0}$ of an ICE group over $G$ of  altitude  $\leq k-1$ and a free abelian group $\mathbb{Z}^{n_0} \times \mathbb{Z}^{m_0 }$, where $\mathbb{Z}^{n_0}$ embeds in $\mathbb{Z}^{n_0} \times \mathbb{Z}^{m_0}$ naturally.

Assume that $G$ is a Droms RAAG of level $l\geq 1$, that is,  $G= \mathbb Z^m \times (G_1 \ast \cdots \ast G_n)$ where each $G_i$ is a Droms RAAG of level less than $l$. An ICE group over $G$ of  altitude  $0$ is of the form $\mathbb Z^{m^\prime} \times (H_1 \ast \cdots \ast H_n)$ where $m^\prime \geq m$ and $H_i$ is an ICE group over $G_i$ for $i\in \{1,\dots, n\}$.

An ICE group over $G$ of  altitude  $k\geq 1$ is an amalgamated free product over $\mathbb{Z}^{n_0}$ of an ICE group over $G$ of  altitude $\leq k-1$ and a free abelian group $\mathbb{Z}^{n_0} \times \mathbb{Z}^{m_0}$, where $\mathbb{Z}^{n_0}$ embeds in $\mathbb{Z}^{n_0} \times \mathbb{Z}^{m_0}$ naturally.
\end{definition}

\begin{remark}
The term used in the literature for the altitude of ICE groups over free groups is \emph{level}. However, since level has been used when defining Droms RAAGs hierarchically, we have decided to use \emph{altitude} for ICE groups over Droms RAAGs.
\end{remark}

\begin{theorem} \label{teo-free1}
Let $G$ be a Droms RAAG and let $K$ be an ICE group over $G$. Let $\{{K }_{i}\}_{i \geq 1}$ be a filtration of normal subgroups of $K$ with Property (2). Then, for sufficiently large $i_{0}$ the group ${K }_{i_{0}}$ is free.
\end{theorem}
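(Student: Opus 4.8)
The plan is to argue by a double induction: an \emph{outer} induction on the level $l(G)$ of the Droms RAAG $G$, and, for each fixed $G$, an \emph{inner} induction on the level $k$ of $K$ as an ICE group over $G$, using the hierarchical description of ICE groups. In every case the strategy is the same: exploit Property (2) of $\{K_i\}_{i\geq 1}$ to trivialize the abelian pieces that obstruct freeness, and then recognize the resulting term of the filtration as the fundamental group of a graph of groups with trivial edge groups and free vertex groups, which is necessarily free.

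First I would treat the inner base case $k=0$. If $l(G)=0$ then $K=G$ is free and every $K_i$ is free, so there is nothing to prove. If $l(G)\geq 1$ then $K\cong \mathbb{Z}^{m'}\times(H_1\ast\cdots\ast H_n)$, where each $H_j$ is an ICE group over the Droms RAAG $G_j$ with $l(G_j)<l(G)$. Here I would run the argument of Proposition~\ref{prop-free1} almost verbatim, replacing the free factors $K_j$ there by the factors $H_j$: by Property (2) there is $i_1$ with $\mathbb{Z}^{m'}\cap K_{i_1}=1$, so the projection $p$ onto $H:=H_1\ast\cdots\ast H_n$ is injective on $K_i$ for $i\geq i_1$; by Lemma~\ref{Lemma 2} the images $\bar K_i:=p(K_i)$ form a filtration of $H$ with Property (2); and by the Kurosh subgroup theorem each normal subgroup $\bar K_i$ is a free product of a free group with subgroups conjugate to $\bar K_i\cap H_j$. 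Since $\{\bar K_i\cap H_j\}_i$ is a filtration of $H_j$ with Property (2) and $l(G_j)<l(G)$, the outer induction hypothesis makes $\bar K_i\cap H_j$ free for large $i$, whence $\bar K_{i_0}\cong K_{i_0}$ is free.

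For the inner inductive step $k\geq 1$, the hierarchical description gives $K=A\ast_C D$ with $C=\mathbb{Z}^{n_0}$, $D=\mathbb{Z}^{n_0}\times\mathbb{Z}^{m_0}$, and $A$ an ICE group over $G$ of level $\leq k-1$. Fix the Bass--Serre tree $T$ of this splitting, whose edge stabilizers are the conjugates of $C$ and whose vertex stabilizers are the conjugates of $A$ and of $D$. Both $C$ and $D$ are finitely generated abelian, so by Property (2) there is $i_D$ with $K_i\cap D=1$, and hence $K_i\cap C=1$, for $i\geq i_D$. Because $K_i$ is normal in $K$, for every $g\in K$ and every subgroup $X$ one has $K_i\cap gXg^{-1}=g(K_i\cap X)g^{-1}$; thus for $i\geq i_D$ all edge stabilizers and all $D$-vertex stabilizers of the induced action of $K_i$ on $T$ are trivial, while each $A$-vertex stabilizer equals $g(K_i\cap A)g^{-1}$. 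Now $\{K_i\cap A\}_i$ is a filtration of normal subgroups of $A$ with Property (2) (any finitely generated abelian $M\leq A$ is also such in $K$, so $K_i\cap M=1$ for large $i$), so by the inner induction hypothesis there is $i_A$ with $K_i\cap A$ free for $i\geq i_A$. For $i_0=\max\{i_D,i_A\}$ the group $K_{i_0}$ is then the fundamental group of a graph of groups with trivial edge groups and free vertex groups, i.e.\ a free product of free groups together with a free group, and is therefore free.

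The only genuine work, and the place I expect the main obstacle, is the amalgamation step: one must verify that Property (2) simultaneously trivializes \emph{every} conjugate of the edge group $C$ and of the abelian vertex group $D$ (this is exactly where normality of $K_i$ is essential), and then invoke Bass--Serre theory to pass from ``trivial edge groups and free vertex groups'' to ``free''. Everything else is either immediate or a direct transcription of Proposition~\ref{prop-free1}, with the free factors upgraded to ICE groups over lower-level Droms RAAGs and absorbed by the outer induction.
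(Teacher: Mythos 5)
Your proposal is correct and follows essentially the same route as the paper: a double induction on the level of the Droms RAAG $G$ and on the level of $K$ as an ICE group over $G$, using Property (2) to kill the abelian edge and vertex stabilizers, Lemma~\ref{Lemma 2} and the Kurosh decomposition for the level-$0$ case, and Bass--Serre theory for the amalgamation step. Your explicit verification that normality of $K_i$ trivializes \emph{all} conjugates of the edge group, and that $\{K_i\cap A\}_i$ inherits Property (2) as a filtration of $A$, spells out details the paper leaves implicit, but the argument is the same.
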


\begin{proof}
We prove it by induction on the level $l(G)$ of $G$ as a Droms RAAG. Suppose that $l(G)=0$. Then, if $K $ has   altitude  $0$, $K $ is precisely $G$ which is a free group, and the result is obvious. If $K $ has   altitude  $\geq 1$, then $K $ is of the form
\[ H \ast_{\mathbb{Z}^{n_0}} (\mathbb{Z}^{m_0} \times \mathbb{Z}^{n_0}),\] and $H$ is an ICE group over $G$ of smaller  altitude than $K$. Since $K_{i}$ is a normal subgroup of $K $, $K_{i}$ inherits a graph of groups decomposition where the vertex groups are of the form \[ (K_{i}\cap H)^{g_{\alpha_{j}}} \quad \text{or} \quad (K_{i}\cap (\mathbb{Z}^{m_0} \times \mathbb{Z}^{n_0}))^{g_{\alpha_{j}}}, \quad \text{for} \quad g_{\alpha_{j}}\in K,\]
and the edge groups are of the form \[ (K_{i} \cap \mathbb{Z}^{n_0})^{g_{\beta_{j}}} \quad \text{for} \quad g_{\beta_{j}} \in K.\]
Note that $\{K_{i} \cap H \}_{i\geq 1}$ is a filtration of $H$ with Property (2), so by inductive hypothesis, there is $i_{1}$ such that $K_{i_{1}} \cap H$ is free. In addition, the filtration $\{K_{i}\}_{i\geq 1}$ has Property (2), so there is $i_{2}= i(\mathbb{Z}^{m_{0}} \times \mathbb{Z}^{n_0})$ such that $K_{i_{2}} \cap (\mathbb{Z}^{m_0} \times \mathbb{Z}^{ n_0})$ is trivial. In particular, $K_{i_{2}} \cap \mathbb{Z}^{n_0}$ is also trivial. Therefore, taking $i_{0}$ to be $\max \{i_{1}, i_{2}\}$, $K_{i_{0}}$ is free.

Now suppose that $l(G) \geq 1$. Then, $G$ is of the form \[ \mathbb{Z}^m \times (G_{1}\ast \cdots \ast G_{n}),\] for some $m\in \mathbb{N} \cup \{0\}$ and $G_{i}$ is a Droms RAAG with $l(G_{i}) \leq l(G)-1$ for each $i\in \{1,\dots,n\}$. If $K$ has altitude $0$ as an ICE group over $G$, then \[ K= \mathbb{Z}^{m^\prime} \times (H_{1}\ast \cdots \ast H_{n}),\] where $m^\prime \geq m$ and $H_{i}$ is an ICE group over $G_{i}$ for $i\in \{1,\dots,n\}$. Let us denote the projection map $K \mapsto H_{1}\ast \cdots \ast H_{n}$ by $p$.

By hypothesis, there is $i_{1}= i(\mathbb{Z}^{m^\prime})$ such that $\mathbb{Z}^{m^\prime} \cap K_{i_{1}}=1$. In addition, by Lemma \ref{Lemma 2},  $\{ N_{i}\}_{i\geq 1}$ is a filtration of normal subgroups of $H_{1}\ast \cdots \ast H_{n}$ with Property (2), where $N_{i}= p(K_{i})$. As a consequence, $N_{i} \simeq K_{i}$ for $i \geq i_1$. 

Note that the group $N_{i}$ is a free product of conjugates of $N_{i} \cap H_{j}$ for $j\in \{1,\dots,n\}$ and a free group. For $j\in \{1,\dots,n\}$, $\{N_{i} \cap H_{j}\}_{i\geq 1}$ is a filtration of normal subgroups of $H_{j}$ with Property (2), so by inductive hypothesis, there is $r_{j}$ such that $N_{r_{j}} \cap H_{j}$ is free. In conclusion, taking $i_{0}$ to be $\max \{i_1, r_{1},\dots,r_{n} \}$, $N_{i_{0}}$ is a free group.

Finally, suppose that $K$ is an ICE group over $G$ of   altitude $k \geq 1$. Then, $K$ is an amalgamated free product over $\mathbb{Z}^{n_0}$ of an ICE group over $G$ of   altitude $\leq k-1$ and a free abelian group $\mathbb{Z}^{ n_0} \times \mathbb{Z}^{ m_0}$. This case may be treated as the case when $K$ is an ICE group of   altitude greater than $0$ over a free group.
\end{proof}

Recall that $\gamma_i(G)$ denotes the lower central series of a group $G$. That is, \[\gamma_1(G) = G \quad \text{and} \quad \gamma_{i+1} (G) = [ G, \gamma_i(G)].\] We denote by $tor(G)$ the set of torsion elements of $G$. In the case when $G$ is finitely generated nilpotent, then $\langle tor(G) \rangle$ is the maximal finite subgroup of $G$.

\begin{theorem} \label{teo-torsion-free1} 
Let $G$ be  a Droms RAAG, let $K$ be an ICE group over $G$ and define $K_{i}$ to be $K_{i} \slash \gamma_{i}(K)= \langle \text{tor}(K \slash \gamma_{i}(K)) \rangle$. Then, $K_{i+1} < K_{i}$, $K_{i}$ is normal in $K$, $K \slash K_{i}$ is torsion-free nilpotent and $\bigcap_{i\geq 1} K_{i}=1$.
\end{theorem}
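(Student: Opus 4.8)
The first three assertions I expect to be formal once finite generation is in hand. Since $K$ is an ICE group over a Droms RAAG, Theorem~\ref{Limit-subgroup-ICE} identifies $K$ as a limit group over Droms RAAGs, so $K$ is finitely generated and each $K/\gamma_i(K)$ is a finitely generated nilpotent group. The plan is to invoke the standard structure of such groups (as recalled just before the statement): the torsion elements form a finite \emph{characteristic} subgroup $T_i := \langle \mathrm{tor}(K/\gamma_i(K))\rangle$ with torsion-free quotient. Normality of $K_i$ then drops out because $K_i$ is the preimage of the characteristic subgroup $T_i$ under $K \to K/\gamma_i(K)$, whose kernel $\gamma_i(K)$ is itself characteristic; and assertion (3) follows from the isomorphism $K/K_i \cong (K/\gamma_i(K))/T_i$, which is torsion-free nilpotent by construction. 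For the nesting $K_{i+1} \subseteq K_i$ I would first observe that $K_i = \{\, g \in K : g^n \in \gamma_i(K) \text{ for some } n \geq 1 \,\}$ is exactly the isolator of $\gamma_i(K)$ in $K$, so $\gamma_{i+1}(K) \subseteq \gamma_i(K)$ immediately forces $K_{i+1} \subseteq K_i$.

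The real content is the last assertion $\bigcap_{i \geq 1} K_i = 1$, and the key reformulation I would exploit is that, with $K_i$ the isolator of $\gamma_i(K)$, this condition is \emph{equivalent} to $K$ being residually torsion-free nilpotent. One direction is clear: if $g \notin \bigcap_i K_i$, then $g$ has nontrivial image in the torsion-free nilpotent group $K/K_i$ for a suitable $i$. For the converse, given a torsion-free nilpotent quotient $q \colon K \to N$ with $q(g) \neq 1$ and $N$ of class $c$, the inclusion $\gamma_{c+1}(K) \subseteq \ker q$ lets $q$ factor through $K/\gamma_{c+1}(K)$; since $N$ is torsion-free, the image of $T_{c+1}$ is trivial, so $K_{c+1} \subseteq \ker q$ and hence $g \notin K_{c+1}$.

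With this reformulation, the plan is to prove that $K$ is residually torsion-free nilpotent by composing two residual statements. First, being a limit group over Droms RAAGs, $K$ is (fully) residually Droms RAAG, so every nontrivial $g \in K$ survives under some homomorphism $\varphi \colon K \to D$ onto a Droms RAAG $D$. Second, every Droms RAAG---being a RAAG---is itself residually torsion-free nilpotent, a classical fact. Composing $\varphi$ with a torsion-free nilpotent quotient of $D$ that detects $\varphi(g)$ yields a torsion-free nilpotent quotient of $K$ detecting $g$, so $K$ is residually torsion-free nilpotent and the intersection is trivial.

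The main obstacle I anticipate is precisely the input that Droms RAAGs are residually torsion-free nilpotent. If one prefers not to cite this, it must be supplied, e.g.\ via the free-by-(free abelian) description established above together with a Magnus-type embedding of a RAAG into the units of its power-series (Mal'cev) completion; alternatively one can run an induction along the $Z\ast$-closure description, checking that the relevant residual torsion-free nilpotence passes through free products and through direct products with $\mathbb{Z}^m$, exactly the two operations appearing in the hierarchy. A secondary point requiring care, though routine, is justifying that $\langle \mathrm{tor}(K/\gamma_i(K))\rangle$ really is the finite subgroup $T_i$ whose preimage equals the isolator of $\gamma_i(K)$; this is where finite generation of $K$ is genuinely used.
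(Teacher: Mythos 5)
Your proposal is correct and takes essentially the same route as the paper: the paper also reduces $\bigcap_{i} K_{i}=1$ to choosing a homomorphism $\varphi \colon K \to G$ detecting a putative non-trivial element of the intersection, and then uses that the image (a Droms RAAG) has torsion-free lower central quotients (citing Wade) together with $\bigcap_{i}\gamma_{i}=1$ --- which is precisely the residual torsion-free nilpotence of RAAGs that you invoke as a classical fact. The only cosmetic difference is that you package the argument through the equivalence of $\bigcap_i K_i = 1$ with residual torsion-free nilpotence, whereas the paper computes $\varphi(K_{i})=\varphi(\gamma_{i}(K))$ directly.
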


\begin{proof}
By construction, $K_i \slash \gamma_i(K)$ is a characteristic subgroup of $K\slash \gamma_i(K)$, so $K_i$ is normal in $K$ and $K_{i+1} < K_i$. It remains to show that $\bigcap_{i} K_i = 1$.
Suppose that $k \in (\bigcap_{i} K_i) \setminus \{ 1 \}$. Since $K$ is a limit group over $G$, there is a homomorphism $\varphi \colon K \mapsto G$ such that $\varphi(k) \not= 1$. Let $G_0 = \text{im}(\varphi)$, so $G_0$ is a Droms RAAG.
 By \cite[Theorem 6.4]{Wade}, we have that
 $G_0 \slash \gamma_{i}(G_0)$ is torsion-free, hence $\varphi(K_i) = \varphi(\gamma_i(K)) $. Then $\varphi(k) \in \bigcap_i \varphi(K_i) = \bigcap_i  \varphi(\gamma_i(K)) \subseteq \bigcap_i \gamma_i(G) = 1$, a contradiction.
\end{proof}

\begin{corollary} \label{free-nilpotent}
Every ICE group over a Droms RAAG is free-by-(torsion-free nilpotent).
\end{corollary}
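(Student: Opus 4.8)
The plan is to assemble the two preceding theorems, so almost all of the work is already done. First I would take the specific filtration $\{K_{i}\}_{i\geq 1}$ of $K$ constructed in Theorem~\ref{teo-torsion-free1}, where $K_{i}$ is the preimage in $K$ of $\langle \text{tor}(K/\gamma_{i}(K))\rangle$. By that theorem, each $K_{i}$ is normal in $K$, the quotient $K/K_{i}$ is torsion-free nilpotent, and $\bigcap_{i\geq 1} K_{i}=1$.

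The key observation is that this filtration satisfies Property~(1): indeed $K/K_{i}$ is torsion-free (being torsion-free nilpotent) and $\bigcap_{i\geq 1} K_{i}=1$, which are exactly the two defining conditions. Hence by Lemma~\ref{Lemma 1} the filtration $\{K_{i}\}_{i\geq 1}$ also has Property~(2).

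Now I would invoke Theorem~\ref{teo-free1}: since $K$ is an ICE group over a Droms RAAG and $\{K_{i}\}_{i\geq 1}$ is a filtration of normal subgroups of $K$ with Property~(2), there is an index $i_{0}$ for which $K_{i_{0}}$ is free. Setting $N \coloneqq K_{i_{0}}$, we obtain a free normal subgroup of $K$ with $K/N = K/K_{i_{0}}$ torsion-free nilpotent, which exhibits $K$ as free-by-(torsion-free nilpotent).

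There is essentially no obstacle once the two theorems are in hand: the entire difficulty has been front-loaded into Theorem~\ref{teo-free1} (the induction over the ICE/Droms hierarchy showing that some term of a Property-(2) filtration is free) and Theorem~\ref{teo-torsion-free1} (the residual-nilpotence argument producing a torsion-free nilpotent quotient with trivial intersection). The only point requiring a moment's care is confirming that the particular filtration of Theorem~\ref{teo-torsion-free1} is eligible as input to Theorem~\ref{teo-free1}, that is, that it has Property~(1) and hence, via Lemma~\ref{Lemma 1}, Property~(2); this is immediate from the stated conclusions.
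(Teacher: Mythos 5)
Your argument is exactly the paper's proof: the paper's one-line justification cites precisely Lemma~\ref{Lemma 1}, Theorem~\ref{teo-free1} and Theorem~\ref{teo-torsion-free1}, and your proposal correctly fills in how they combine (the filtration from Theorem~\ref{teo-torsion-free1} has Property~(1), hence Property~(2) by Lemma~\ref{Lemma 1}, so Theorem~\ref{teo-free1} yields a free $K_{i_0}$ with torsion-free nilpotent quotient). No gaps.
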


\begin{proof}
It follows from Lemma \ref{Lemma 1}, Theorem \ref{teo-free1} and  Theorem \ref{teo-torsion-free1}. 
\end{proof}

\begin{proposition} \label{free-by-(torsion-free nilpotent)}
Every limit group over a Droms RAAG is free-by-(torsion-free nilpotent).
\end{proposition}

\begin{proof} It follows from Theorem \ref{Limit-subgroup-ICE}
and Corollary \ref{free-nilpotent}.
\end{proof}

\section{Corollaries on subdirect products of limit groups over Droms RAAGs}
\label{Section 5}

In this section we aim to prove Theorem \ref{FPs}.  Theorem \ref{FPs} is a generalisation of \cite[Theorem 11]{Desi}.

\begin{theorem} \cite[Theorem 11]{Desi} \label{old11}
Let $G_1, \dots, G_m$ be non-abelian limit groups over free groups and let \[S 
 < G \colon = G_1 \times \cdots \times G_m\] be a finitely generated subdirect product intersecting each factor $G_i$ non-trivially. Suppose further that, for some fixed natural number $s \in \{ 2, \ldots, m \}$, for every subgroup $S_0$ of finite index in $S$ the homology group $H_i (S_0 , \mathbb{Q})$ is finite dimensional (over $\mathbb{Q}$) for all
$i \leq  s$. Then for every canonical projection \[p_{j_1, \dots,  j_s} \colon S \mapsto G_{j_1} \times \cdots \times G_{ j_ s}\]
the index of $p_{j_1, \dots, j_s} (S)$ in  $G_{j_1} \times \cdots \times G_{ j_s}$ is finite.
\end{theorem}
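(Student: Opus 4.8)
The plan is to prove the statement by induction on the corank $m-s\ge 0$, using a known dichotomy in the base case and peeling off one superfluous direct factor at each inductive step. Since the hypothesis---finite-dimensionality of $H_i(S_0,\mathbb{Q})$ for all $i\le s$ and all finite-index $S_0<S$---is invariant under permuting the factors $G_1,\dots,G_m$, it suffices to treat a single canonical projection, say $p_{1,\dots,s}$; every other $s$-tuple then follows by relabelling. For the base case $m=s$ I would invoke the dichotomy of Bridson--Howie--Miller--Short \cite{Bridson} (the free-group analogue of Theorem~\ref{Theorem 8.1}): a finitely generated full subdirect product of non-abelian limit groups over free groups either has finite index in $G_1\times\cdots\times G_m$, or admits a finite-index subgroup $S_0$ with $\dim_\mathbb{Q} H_j(S_0,\mathbb{Q})=\infty$ for some $j\le m$. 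Since non-abelian limit groups have trivial centre, this input applies, and the hypothesis with $s=m$ excludes the second alternative, so $p_{1,\dots,m}(S)=S$ has finite index.

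For the inductive step assume $m-s\ge 1$. Then I may choose an index $t\notin\{1,\dots,s\}$ and, after relabelling, take $t=m$. Put $N\coloneqq S\cap G_m$; fullness gives $N\ne 1$, and $N$ is normal in $p_m(S)=G_m$. The projection $\bar S\coloneqq p_{1,\dots,m-1}(S)\cong S/N$ is again a full subdirect product of the non-abelian limit groups $G_1,\dots,G_{m-1}$, and it retains the target coordinates $\{1,\dots,s\}$. If I can show that $\bar S$ again satisfies the homological hypothesis in degrees $\le s$, then the inductive hypothesis (corank lowered to $m-1-s$) applies to $\bar S$ and yields that $p_{1,\dots,s}(\bar S)=p_{1,\dots,s}(S)$ has finite index, completing the step. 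Thus the whole argument reduces to the claim: for every finite-index $\bar S_0<\bar S$, writing $S_0$ for its preimage in $S$ (finite-index, containing $N$, with $\bar S_0=S_0/N$), finite-dimensionality of $H_i(S_0,\mathbb{Q})$ for $i\le s$ forces finite-dimensionality of $H_i(\bar S_0,\mathbb{Q})$ for $i\le s$.

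To establish this claim I would run the Lyndon--Hochschild--Serre spectral sequence $E^2_{p,q}=H_p(\bar S_0,H_q(N,\mathbb{Q}))\Rightarrow H_{p+q}(S_0,\mathbb{Q})$ of the extension $1\to N\to S_0\to\bar S_0\to 1$ and argue by induction on $p$ from $0$ to $s$. The decisive ingredient is the free-by-(torsion-free nilpotent) structure of the limit group $G_m$ established in \cite{Desi}: although $N$ may have infinite index in $G_m$ and hence infinite-dimensional rational homology, this structure forces each $H_q(N,\mathbb{Q})$ to be a finitely generated module over the group ring of the torsion-free nilpotent quotient through which $\bar S_0$ acts. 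As the Bieri--Stallings example $\ker(F\times F\times F\to\mathbb{Z})$ already illustrates, the infinite-dimensional part of the fibre homology then propagates into $H_*(S_0,\mathbb{Q})$ only in total degrees exceeding $s$; equivalently, module-finiteness makes $E^2_{p,q}=H_p(\bar S_0,H_q(N,\mathbb{Q}))$ finite-dimensional in the range $p+q\le s$ once the lower base terms $H_{<p}(\bar S_0,\mathbb{Q})$ are known to be finite-dimensional. Comparing the edge map $H_p(S_0,\mathbb{Q})\twoheadrightarrow E^\infty_{p,0}\subseteq H_p(\bar S_0,\mathbb{Q})$ with the finitely many differentials $d_r\colon E^r_{p,0}\to E^r_{p-r,r-1}$, whose images lie in these controlled terms of total degree $p-1<p\le s$, then bounds $\dim_\mathbb{Q} H_p(\bar S_0,\mathbb{Q})$ by $\dim_\mathbb{Q} H_p(S_0,\mathbb{Q})$ plus finite-dimensional contributions, closing the induction on $p$.

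I expect the main obstacle to be precisely this spectral-sequence bookkeeping combined with the module-finiteness of the fibre homology: one must verify that no infinite-dimensional contribution from $H_{\ge 1}(N,\mathbb{Q})$ can reach base degree $\le s$, and that the differentials cannot conceal an infinite-dimensional $H_{\le s}(\bar S_0,\mathbb{Q})$. Making the module-finiteness usable is exactly what forces the hypothesis to be quantified over all finite-index $S_0$: after refining $S_0$ one arranges the acting torsion-free nilpotent quotient to act through a well-understood coefficient ring over which finitely generated modules are Noetherian, so that finite generation of $H_q(N,\mathbb{Q})$ as a module translates into finite-dimensionality of the relevant low-degree homology of $\bar S_0$. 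Verifying that this translation holds uniformly across the finite-index refinements, and tracking the exact degree in which the Bieri--Stallings phenomenon first produces infinite-dimensional homology, is the technical core of the argument.
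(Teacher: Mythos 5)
Your outer induction on the corank $m-s$ is sound as a skeleton, and your base case $m=s$ is exactly the right use of the Bridson--Howie--Miller--Short dichotomy (it is where that dichotomy appears in the actual argument too, namely as condition (3) of Theorem~\ref{new1}). The gap is in the inductive step: everything hinges on your Claim that the homological hypothesis descends from $S$ to $\bar S\cong S/N$, and your spectral-sequence proof of that Claim breaks at the terms $E^2_{p,q}=H_p(\bar S_0,H_q(N,\mathbb{Q}))$ with $p\geq 1$ and $q\geq 1$. What your ingredients actually give is that $H_q(N,\mathbb{Q})$ is finitely generated over $\mathbb{Q}[G_m/N]$, with $G_m/N$ virtually nilpotent (this already requires knowing $S$ virtually surjects on pairs, available from the $s\geq 2$ hypothesis via \cite{Bridson2}), and that the $\bar S_0$-action is pulled back along $\bar S_0\to G_m/N$. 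But finite generation of the coefficient module over the Noetherian group ring of the \emph{quotient} does not make $H_p(\bar S_0,M)$ finite-dimensional when $\bar S_0$ itself has a non-Noetherian group ring: already $H_1(F_2,\mathbb{Q}[\mathbb{Z}^2])$, with $F_2$ acting through its abelianization and the coefficient module free of rank one, is infinite-dimensional (the kernel of $M^2\to M$, $(a,b)\mapsto a(x-1)+b(y-1)$, is a free rank-one Koszul syzygy), even though $H_0(F_2,\mathbb{Q})$ is finite-dimensional. So your stated principle --- that module-finiteness makes $E^2_{p,q}$ finite-dimensional in total degree $\leq s$ once $H_{<p}(\bar S_0,\mathbb{Q})$ is finite-dimensional --- is false, and your setting is exactly the dangerous one: by Proposition~\ref{qq32}, $H_1(N,\mathbb{Q})$ typically \emph{contains} a nonzero free $\mathbb{Q}[G_m/N]$-submodule, and $\bar S_0$ contains nonabelian free subgroups mapping onto finite-index subgroups of $G_m/N$. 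For $s=2$ you only ever need the coinvariants $E^2_{0,1}$, which are indeed finite-dimensional by the Noetherian argument, but from $s=3$ onwards you need $E^2_{1,1}=H_1(\bar S_0,H_1(N,\mathbb{Q}))$ and its higher analogues, and nothing in your proposal controls these; controlling them is essentially the entire difficulty of the theorem.

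It is instructive to see how the actual proof (Kochloukova's proof of \cite[Theorem~11]{Desi}, reproduced in axiomatized form as Theorem~\ref{new1} in this paper) sidesteps precisely this obstruction. It inducts on $s$ rather than on $m-s$, and instead of quotienting by a single fibre $N=S\cap G_m$ it quotients by the full product $L=L_1\times\cdots\times L_m$ of free normal subgroups $L_i\trianglelefteq G_i$ with nilpotent quotients, arranged (after passing to finite-index subgroups, using the free-by-nilpotent structure) to lie inside $S$. Then the base group $Q=S/L$ is finitely generated nilpotent, so $\mathbb{Q}Q$ is Noetherian; the inductive hypothesis --- virtual surjection onto every $s-1$ factors --- yields finite generation of $H_j(L,\mathbb{Q})$ over $\mathbb{Q}Q$ for $j\leq s-1$, whence \emph{all} the terms $E^2_{i,j}$ with $j\leq s-1$ are Tor groups of finitely generated modules over a Noetherian ring and hence finite-dimensional; the surviving term $E^2_{0,s}$ is a coinvariants computation that transfers to the projections $p_{j_1,\dots,j_s}(S)$, and only then is the dichotomy of \cite{Bridson} invoked, together with finite presentability of the projections (via \cite{Bridson2}) and the virtually nilpotent co-fibre condition to handle arbitrary finite-index subgroups. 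If you wanted to salvage your corank induction, you would need finite generation of $H_b(\ker(\bar S_0\to G_m/N),\mathbb{Q})$ as a module in the relevant range --- which amounts to redoing this argument, not to a shortcut.
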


We observe that in \cite{Desi} the domain of $p_{j_1, \dots, j_s}$ is defined to be $G$ and not as we stated it above, but in \cite{Bridson2} and \cite{Bridson} it is $S$. It looks more convenient to stick to the latter.

The starting point in the proof of Theorem \ref{FPs} is that the proof of Theorem \ref{old11} applies in bigger generality. We explain this in the following result.

\begin{theorem} \label{new1}
Let $2 \leq s \leq m$ be integers and $S < G_1 \times \dots \times G_m$ be a finitely generated subdirect product such that:\\[5pt]
(1) there exist normal free subgroups $L_i$ in $G_i$ with $Q_i \colon = G_i \slash L_i$ nilpotent;\\[5pt]
(2) each $G_i$ is finitely presented;\\[5pt]
(3) for every $1 \leq j_1 < \dots < j_s \leq m$ if $M_{j_1, \dots, j_s} $ is a subgroup of infinite index in the group $G_{j_1} \times \cdots \times G_{j_s}$, then there exists $i \leq s$   and a subgroup $M_{j_1, \ldots, j_s}^0$ of finite index in $M_{j_1, \ldots, j_s}$ such that $H_i(M_{j_1, \ldots, j_s}^0, \mathbb{Q})$ is infinite dimensional over $\mathbb{Q}$;\\[5pt]
(4) for every $1 \leq j_1 < \dots < j_s \leq m$ and every subgroup $H_{j_i}$ of finite index in $G_{j_i}$ we have that if a subdirect product $M_{j_1, \dots, j_s} \subseteq H_{j_1} \times \cdots \times H_{j_s}$ is finitely presented, then there is a subgroup $K_{j_i}$ of finite index in $H_{j_i}$ and $N_{j_i}$ a normal subgroup of  $H_{j_i}$ such that  $K_{j_i}\slash N_{j_i}$ is nilpotent and $N_{j_1} \times \cdots \times N_{j_s} \subseteq M_{j_1, \ldots, j_s }$;\\[5pt]
(5) $S$ virtually surjects on pairs;\\[5pt]
(6) $L \colon =L_1 \times \cdots \times L_m \subseteq S$;\\[5pt]
(7) for every subgroup $S_0$ of finite index in $S$, the homology group $H_i (S_0 , \mathbb{Q})$ is finite dimensional (over $\mathbb{Q}$) for all
$ i \leq  s$. 

Then for every canonical projection
\[p_{j_1, \dots,  j_s} \colon S \mapsto G_{j_1} \times \cdots \times G_{ j_ s},\]
the index of $p_{j_1, \ldots, j_s} (S)$ in  $G_{j_1} \times \cdots \times G_{ j_s}$ is finite.
\end{theorem}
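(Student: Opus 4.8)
The plan is to run the argument of Theorem~\ref{old11} essentially verbatim, checking at each step that the only features of limit groups over free groups it invoked are among the hypotheses (1)--(7). I would organize the proof around a single canonical projection, argue by contradiction, and read the contradiction off a Lyndon--Hochschild--Serre spectral sequence.

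First I would dispose of the base case $s=m$, which turns out to contain no real work. Here the only admissible subset is $J=\{1,\dots,m\}$, the projection $p_J$ is the identity on $S$, and the claim is just that $[G_1\times\cdots\times G_m:S]<\infty$. If $S$ had infinite index, then hypothesis (3), applied with $M_{1,\dots,m}\coloneqq S$, would yield some $i\le s$ with $H_i(S,\mathbb{Q})$ infinite dimensional; taking $S_0=S$ in hypothesis (7) contradicts this. So all the content lies in the case $s<m$, where the image of a genuine projection is a quotient of $S$ rather than $S$ itself.

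Now fix $J=\{j_1<\cdots<j_s\}$, put $\bar S\coloneqq p_J(S)\le G_{j_1}\times\cdots\times G_{j_s}$, and suppose for contradiction that $\bar S$ has infinite index. Since $S$ is subdirect, so is $\bar S$, and since $L\subseteq S$ by (6) we have $L_J\coloneqq L_{j_1}\times\cdots\times L_{j_s}\subseteq\bar S$, so $\bar S/L_J$ embeds in the nilpotent group $Q_{j_1}\times\cdots\times Q_{j_s}$ and is finitely generated nilpotent. By hypothesis (3) there is an $i_0\le s$ with $\dim_{\mathbb{Q}}H_{i_0}(\bar S,\mathbb{Q})=\infty$, and I would take $i_0$ minimal with this property, so that $H_p(\bar S,\mathbb{Q})$ is finite dimensional for all $p<i_0$. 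The goal is to transport this infinite dimensionality back to $S$, contradicting (7). To that end set $T\coloneqq S\cap\prod_{k\notin J}G_k=\ker(p_J|_S)$, giving the extension $1\to T\to S\to\bar S\to 1$ and the spectral sequence $E^2_{pq}=H_p(\bar S,H_q(T,\mathbb{Q}))\Rightarrow H_{p+q}(S,\mathbb{Q})$. The edge term $E^2_{i_0,0}=H_{i_0}(\bar S,\mathbb{Q})$ is infinite dimensional, receives no nonzero differential (those would originate in negative fibre degree), and can only be annihilated by outgoing differentials $d_r\colon E^r_{i_0,0}\to E^r_{i_0-r,\,r-1}$, whose targets are subquotients of $H_{i_0-r}(\bar S,H_{r-1}(T,\mathbb{Q}))$ with $r\ge2$. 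Thus if all the mixed terms $H_p(\bar S,H_q(T,\mathbb{Q}))$ with $q\ge1$ and $p+q=i_0-1$ are finite dimensional, then $E^2_{i_0,0}$ survives to $E^\infty$, forcing $H_{i_0}(S,\mathbb{Q})$ to be infinite dimensional with $i_0\le s$, which contradicts (7) with $S_0=S$.

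The main obstacle is precisely establishing that finiteness of the mixed fibre terms, which is where hypotheses (1), (2), (4) and (5) must do their work. The fibre $T$ is squeezed between $\prod_{k\notin J}L_k$ and $\prod_{k\notin J}G_k$, hence is again a (direct product of free)-by-nilpotent group of the same shape as the ambient factors, and the $\bar S$-action on $H_q(T,\mathbb{Q})$ factors through the nilpotent quotient $\bar S/L_J$. The genuine subtlety is that the coordinate intersections $S\cap G_k$ need not be finitely generated, so $T$ cannot be analyzed naively; one must use virtual surjection onto pairs from (5), the finite presentability in (2), and the normal nilpotent-quotient subgroups $N_{j_i}$ supplied by (4) to pin down enough of $T$ and of the module structure of $H_q(T,\mathbb{Q})$ to guarantee that $H_p(\bar S,H_q(T,\mathbb{Q}))$ is finite dimensional in the relevant low range of total degree, exactly as in the proof of Theorem~\ref{old11}. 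Once this homological control of the fibre is in hand, the spectral-sequence bookkeeping above, together with the clean base case $s=m$, completes the argument.
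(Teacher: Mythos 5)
Your outline defers the entire mathematical content of the theorem to a single unproved claim, namely that the mixed terms $H_p(\bar S, H_q(T,\mathbb{Q}))$ with $q\ge 1$ are finite dimensional in the relevant range. You acknowledge this is "the main obstacle" and say that hypotheses (1), (2), (4) and (5) "must do their work" there, but you do not say how, and in fact your choice of fibration makes this step essentially intractable: the kernel $T=S\cap\prod_{k\notin J}G_k$ is exactly the kind of subgroup (typically not finitely generated, with uncontrollable homology as a $\bar S$-module) that the hypotheses are designed to let one avoid. The paper instead runs the Lyndon--Hochschild--Serre spectral sequence for the extension $1\to L\to S\to Q\to 1$, where $L=L_1\times\cdots\times L_m$ is the product of the free groups from (1) and (6) and $Q=S/L$ is finitely generated nilpotent. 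This is the decisive move: $H_*(L,\mathbb{Q})$ decomposes by the K\"unneth formula into tensor products of the $H_1(L_k,\mathbb{Q})$, each finitely generated over the Noetherian ring $\mathbb{Q}Q_k$, so the fibre terms can actually be controlled.

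Two further structural pieces are missing. First, the paper's argument is an induction on $s$ with base case $s=2$ given by hypothesis (5); the inductive hypothesis (virtual surjection onto every $(s-1)$-tuple) is what forces $H_j(L,\mathbb{Q})$ to be a finitely generated $\mathbb{Q}Q$-module for $j\le s-1$, hence $E^2_{i,j}$ finite dimensional for $j\le s-1$, isolating $E^2_{0,s}$ as the only term that can carry infinite dimension. Your proposal has no induction on $s$; your "base case" $s=m$ plays no role in the real argument, and without the inductive input the low fibre-degree terms cannot be bounded. Second, the argument must control the homology of \emph{arbitrary} finite-index subgroups $\widetilde S_0$ of $p_J(S)$, not just of $p_J(S)$ itself; these need not contain $L_{j_1}\times\cdots\times L_{j_s}$, and this is precisely where (2) (to get finite presentability of $p_J(S)$ via virtual surjection on pairs) and (4) (to manufacture a new normal subgroup $\gamma_t(K_{j_i})\subseteq\widetilde S_0$ with free lower central term and nilpotent quotient) enter. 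Your proposal never uses (4) in any identifiable way, which is a reliable sign that the argument as sketched cannot close.
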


\begin{proof} We will prove the result by induction on $s$. For $s = 2$, this is condition 5.

Suppose that $s \geq 3$. We divide the proof in several steps.

1) Set $Q \colon = S\slash L < Q_1 \times \cdots \times Q_m$. Consider the Lyndon--Hochschild--Serre spectral sequence \[E^2_{i,j} = H_i(Q, H_j(L, \mathbb{Q}))\] that converges to $H_{i+j}(S,\mathbb{Q})$.

Note that since each $L_i$ is a free group, for every $t \leq m$ we have that
\begin{equation} \label{old2} H_t(L, \mathbb{Q}) \simeq \bigoplus_{1 \leq j_1 < j_2 < \dots < j_t \leq m} H_1(L_{j_1}, \mathbb{Q}) \otimes_{\mathbb{Q}} \dots \otimes_{\mathbb{Q}} H_1(L_{j_t}, \mathbb{Q}),\end{equation}
where each summand is $Q$-invariant, where the $Q$-action is induced by conjugation. Thus,
\[E^2_{0,s} = H_0(Q, H_s(L, \mathbb{Q})) \simeq \bigoplus_{1 \leq j_1 < j_2 < \ldots < j_s \leq m} \Big{(} H_1(L_{j_1}, \mathbb{Q}) \otimes \ldots \otimes H_1(L_{j_s}, \mathbb{Q})  \otimes_{\mathbb{Q}Q} \mathbb{Q} \Big{)}.\]

 By inductive hypothesis, $S$ virtually surjects onto $s-1$ factors. This implies that $ H_j(L, \mathbb{Q})$ is a finitely generated  $\mathbb{Z}Q$-module for $j \leq s-1$, and hence,
\begin{equation} \label{qq1}  E^2_{i,j} \hbox{ is finite dimensional over } \mathbb{Q} \hbox{ for every }j \leq s-1. \end{equation}

For $ i \geq 2$, we have that $s+1 - i \leq s-1$, so by (\ref{qq1}), $E_{i,s+1 - i}^i $ is finite dimensional over $\mathbb{Q}$. Hence, for all differential maps $d_{i, s+1-i }^i \colon E_{i,s+1 - i}^i \mapsto E_{0,s}^i$, the subgroup $\text{im}(d_{i,  s+ 1 - i }^i)$ is finite dimensional over $\mathbb{Q}$. Thus, \[E_{0,s}^{ i+1} = \ker ( d_{0,s}^i) \slash \text{im}(d_{i,  s+1-i }^i) = E_{0,s}^i \slash \text{im}(d_{i,  s+1-i }^i)\] is finite dimensional over $\mathbb{Q}$ if and only if $ E_{0,s}^i$ is finite dimensional over $\mathbb{Q}$. This implies that $E^2_{0,s}$  is finite dimensional over $\mathbb{Q}$ if and only if $E^{ \infty}_{0,s}$ is finite dimensional over $\mathbb{Q}$. Combining this with the convergence of the spectral sequence and (\ref{qq1}) we deduce that
\[ H_s(S, \mathbb{Q}) \hbox{ is finite dimensional over }\mathbb{Q}\hbox{ if and only if }\]\[E^2_{0,s} = H_0(Q, H_s(L,\mathbb{Q})) \hbox{ is finite dimensional over }\mathbb{Q}.\]

The condition that $H_s(S,\mathbb{Q})$ is finite dimensional over $\mathbb{Q}$ implies that
for each $1 \leq j_1 < j_2 < \ldots < j_s \leq m$,
\begin{equation} \label{old1} \dim_{\mathbb{Q}} \big{(} H_1(L_{j_1}, \mathbb{Q}) \otimes \dots \otimes H_1(L_{j_s}, \mathbb{Q}) \otimes_{\mathbb{Q}Q} \mathbb{Q}\big{)} = \end{equation} \[\dim_{\mathbb{Q}} \big{(}H_1(L_{j_1}, \mathbb{Q}) \otimes \dots \otimes H_1(L_{j_s}, \mathbb{Q})  \otimes_{\mathbb{Q} h_{j_1, \ldots, j_s}(Q)} \mathbb{Q} \big{)}< \infty,
\]
where
$h_{j_1, \dots, j_s} \colon Q \mapsto Q_{j_1} \times \cdots \times Q_{j_s}$ is the canonical projection.

2) We consider $\widetilde{S}$ a subgroup of finite index in $p_{j_1, \dots, j_s}(S)$ that contains $\widetilde{L} \colon = L_{j_1} \times \cdots \times L_{j_s}$ and we set $\widetilde{Q} \colon =\widetilde{S} \slash \widetilde{L}$. Then, the Lyndon--Hochschild--Serre spectral sequence \[\widetilde{E}^2_{i,j} = H_i(\widetilde{Q}, H_j( \widetilde{L}, \mathbb{Q}))\] converges to $H_{i+ j}(\widetilde{S}, \mathbb{Q})$. By (\ref{old2}) and (\ref{old1}) we deduce that $\dim_{\mathbb{Q}} \widetilde{E}^2_{i,j} < \infty$   for $j \leq s-1$ and  $\dim_{\mathbb{Q}} \widetilde{E}^2_{0,s}< \infty$. Then, by the convergence of the spectral sequence, we deduce that  $\dim_{\mathbb{Q}} H_i(\widetilde{S},\mathbb{Q}) < \infty$ for  $i \leq s$.

3) Now we  consider $\widetilde{S}_0$ an arbitrary subgroup of finite index in $p_{j_1, \dots, j_s}(S)$. We view $p_{j_1, \dots, j_s}(S)$ as a subdirect product of $G_{j_1} \times \cdots \times G_{j_s}$. As $S$ virtually surjects on pairs we have that $p_{j_1, \dots, j_s}(S)$ virtually surjects on pairs, so by \cite[Theorem A]{Bridson2}, $p_{j_1, \dots, j_s}(S)$ is finitely presented. Hence $\widetilde{S}_0$ is finitely presented. Then, by condition 4 applied to $\widetilde{S}_0$ considered as a subdirect product of $p_{j_1}(\widetilde{S}_0) \times \cdots \times p_{j_s}(\widetilde{S}_0)$, there is a subgroup of finite index $K_{j_i}$ in $p_{j_i}(\widetilde{S}_0)$ such that for sufficiently big $t$ we have that $ \gamma_t(K_{j_i}) \subseteq \widetilde{S}_0$. By condition 1, we can choose $t$ sufficiently big so that $\gamma_t(G_k) \subseteq L_k$ is free for every $ 1 \leq k \leq m$. In particular, $\gamma_t(K_{j_i}) $ is free.

 Note that each $K_{j_i}$ can be chosen normal  in $p_{j_i}(\widetilde{S}_0)$ (by substituting if necessary $K_{j_i}$ with its core in $p_{j_i}(\widetilde{S}_0)$)  and that
$p_{j_i}(\widetilde{S}_0)$ has finite index in $G_{j_i} = p_{j_i}(S)$, hence $K_{j_i}$ has finite index in $G_{j_i}$ for $ 1 \leq i \leq s$. Thus
$\widetilde{S}_0 \cap ( K_{j_1} \times \ldots \times K_{j_s} )$ has finite index in $\widetilde{S}_0$.
 By construction  $\widetilde{S}_0$ is a subgroup of finite index in $p_{j_1, \dots, j_s}(S)$ and so $\widetilde{S}_0 \cap ( K_{j_1} \times \ldots \times K_{j_s} )$ has finite index in $p_{j_1, \dots, j_s}(S)$. We set $S_0 = p_{j_1, \ldots, j_s}^{-1} (\widetilde{S}_0 \cap ( K_{j_1} \times \ldots \times K_{j_s} ))$ and note it is a subgroup of finite index in $S$. Note that $p_{j_i} (S_0) \subseteq K_{j_i}$ for $ 1 \leq i \leq s$.

 By condition 7, $H_i (S_0 , \mathbb{Q})$ is finite dimensional (over $\mathbb{Q}$) for all $ i \leq  s.$  Applying Step 2 for $S$ substituted with $S_0$, $G_j$ substituted with
 $p_j(S_0)$
  and $L_j$ substituted with $\gamma_t(p_j(S_0))$, we deduce that for every subgroup $B$ of  finite index in $p_{j_1, \dots, j_s}(S_0) = \widetilde{S}_0 \cap ( K_{j_1} \times \ldots \times K_{j_s} )$ such that $B$ contains $\gamma_t(p_{j_1}(S_0)) \times \ldots \times \gamma_t(p_{j_s}(S_0))$
 we have that 
\[\dim_{\mathbb{Q}} H_i (B, \mathbb{Q}) < \infty \quad \text{for} \quad i \leq s.\]
We apply this for $B = p_{j_1, \dots, j_s}(S_0) = \widetilde{S}_0 \cap ( K_{j_1} \times \ldots \times K_{j_s} )$ and note that $B$ is a normal subgroup of finite index in $\widetilde{S}_0$. Hence the Lyndon-Hoschild-Serre spectral sequence $H_i(\widetilde{S}_0/B, H_j (B, \mathbb{Q}))$ that converges to $H_{i+j}(\widetilde{S}_0, \mathbb{Q})$ implies that 
\[\dim_{\mathbb{Q}} H_i (\widetilde{S}_0, \mathbb{Q}) < \infty \quad \text{for} \quad i \leq s.\]
But this combined with condition 3 and the fact that  $\widetilde{S}_0$ is an arbitrary subgroup of finite index in $p_{j_1, \dots, j_s}(S)$ implies that $p_{j_1, \dots, j_s} (S)$ has finite index in $G_{j_1} \times \cdots \times G_{j_s}$. 
\end{proof}

Let $n \geq 0$, $R$ be an associative ring with identity and let $M$ be a (right) $R$-module. Then $M$ is  of \emph{(homological) type $FP_n$} if there is a projective resolution \[{\mathcal P} \colon \dots \mapsto P_n \mapsto \dots \mapsto P_i \mapsto P_{i-1} \mapsto \dots \mapsto P_0 \mapsto M \mapsto 0\] such that each projective $R$-module $P_i$ is finitely generated for $0 \leq i \leq n$. A group $G$ is of type $FP_n$ over a  commutative ring $S$ if the trivial $S G$-module $S$ is of type $FP_n$ over the group algebra $S G$.

\begin{lemma} \cite[Lemma 6]{Desi} \label{qq21} Let $Q_1, \ldots, Q_i$ be finitely generated nilpotent groups and $V_j$ be a finitely
generated $\mathbb{Q} Q_j$-module such that $V_j$ contains a cyclic non-zero free $\mathbb{Q} Q_j$-submodule
$W_j$ for $ 1 \leq j \leq i$. Suppose that $\widetilde{Q}$
is a subgroup of $Q \colon = Q_1 \times \cdots \times Q_i$  such that $V_1 \otimes_{\mathbb{Q} } \cdots \otimes_{\mathbb{Q} } V_i$ is finitely generated as a $\mathbb{Q} \widetilde{Q}$-module. Then, $\widetilde{Q}$ has finite index in $Q$.
 \end{lemma}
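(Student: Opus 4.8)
The plan is to exploit the free cyclic submodules $W_j$ to manufacture a free cyclic $\mathbb{Q} Q$-submodule inside the tensor product, and then to descend finite generation from $V \coloneqq V_1 \otimes_{\mathbb{Q}} \cdots \otimes_{\mathbb{Q}} V_i$ to that submodule using Noetherianity; once this is done, finite generation translates directly into finiteness of the index.

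First I would record the ring-theoretic input. Since $\widetilde{Q}$ is a subgroup of the finitely generated nilpotent group $Q = Q_1 \times \cdots \times Q_i$, it is itself finitely generated (finitely generated nilpotent groups satisfy the maximal condition) and nilpotent, hence polycyclic. By P. Hall's theorem, the group algebra $\mathbb{Q}\widetilde{Q}$ is therefore Noetherian. This is the key structural fact that makes the descent work.

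Next I would form $W \coloneqq W_1 \otimes_{\mathbb{Q}} \cdots \otimes_{\mathbb{Q}} W_i$. Because tensoring over the field $\mathbb{Q}$ is exact, the inclusions $W_j \hookrightarrow V_j$ combine to an inclusion $W \hookrightarrow V$; and as each $W_j$ is $Q_j$-invariant while $Q$ acts factorwise, $W$ is a $\mathbb{Q} Q$-submodule of $V$. Using $W_j \cong \mathbb{Q} Q_j$ together with the canonical isomorphism $\mathbb{Q} Q_1 \otimes_{\mathbb{Q}} \cdots \otimes_{\mathbb{Q}} \mathbb{Q} Q_i \cong \mathbb{Q}(Q_1 \times \cdots \times Q_i) = \mathbb{Q} Q$, I get that $W$ is a free cyclic $\mathbb{Q} Q$-module, generated by $w_1 \otimes \cdots \otimes w_i$, where $w_j$ generates $W_j$.

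Finally comes the descent step, which I expect to be the crux. By hypothesis $V$ is finitely generated over $\mathbb{Q}\widetilde{Q}$, and $\mathbb{Q}\widetilde{Q}$ is Noetherian, so its submodule $W$ is finitely generated over $\mathbb{Q}\widetilde{Q}$ as well. Now restrict the free rank-one $\mathbb{Q} Q$-module $W \cong \mathbb{Q} Q$ to $\mathbb{Q}\widetilde{Q}$: it becomes free over $\mathbb{Q}\widetilde{Q}$ with basis a transversal of $\widetilde{Q}$ in $Q$, so its $\mathbb{Q}\widetilde{Q}$-rank equals $[Q : \widetilde{Q}]$. A free module on infinitely many basis vectors cannot be finitely generated, since finitely many generators involve only finitely many coordinates; hence finite generation of $W$ forces $[Q : \widetilde{Q}] < \infty$. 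The main obstacle is precisely this descent: without the Noetherianity of $\mathbb{Q}\widetilde{Q}$ there would be no reason for the submodule $W$ to inherit finite generation from $V$, and it is exactly here that finite generation of the $V_j$ and nilpotence of the $Q_j$ are used.
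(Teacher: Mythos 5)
Your proof is correct and complete. The paper itself gives no argument for this lemma --- it is quoted verbatim from \cite[Lemma 6]{Desi} --- but your reconstruction is exactly the argument of that source: embed the free cyclic module $W_1\otimes_{\mathbb{Q}}\cdots\otimes_{\mathbb{Q}}W_i\cong \mathbb{Q}Q$ into $V_1\otimes_{\mathbb{Q}}\cdots\otimes_{\mathbb{Q}}V_i$, use that $\widetilde{Q}$ is finitely generated nilpotent (hence polycyclic, so $\mathbb{Q}\widetilde{Q}$ is Noetherian by P.~Hall) to descend finite generation to that submodule, and read off the index from the $\mathbb{Q}\widetilde{Q}$-rank of $\mathbb{Q}Q$.
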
 

\begin{proposition} \cite[Proposition 7]{Desi} \label{qq32}
Let $G$ be a group of negative Euler characteristic $\chi(G)$ such that the trivial $\mathbb{Q} G$-module $\mathbb{Q}$ has a free resolution with finitely generated modules and finite length. Then, for any normal subgroup $M$ of $G$ such that $Q \colon =G\slash M$ is torsion-free nilpotent and $M$ is free, the $\mathbb{Q} Q$-module $V = M\slash [M,M] \otimes_{\mathbb{Z}} \mathbb{Q}$ has a non-zero free
$\mathbb{Q} Q$-submodule, where $Q$ acts on $M=M\slash [M, M]$ via conjugation.
\end{proposition}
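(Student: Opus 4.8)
The plan is to deduce the statement from a comparison of two Euler characteristic computations for the homology of $G$ with coefficients in the division ring of fractions of $\mathbb{Q} Q$. Since $G$ is finitely generated, so is its quotient $Q$, which is therefore a finitely generated torsion-free nilpotent group; in particular $Q$ is orderable and polycyclic, so $\mathbb{Q} Q$ is a Noetherian domain and hence a (two-sided) Ore domain. Let $D$ denote its division ring of fractions, recalling that $D$ is flat as a $\mathbb{Q} Q$-module. If $Q$ is trivial, then $G=M$ is free and $\chi(G)<0$ forces $\mathrm{rank}(G)\geq 2$, so $V$ is a nonzero $\mathbb{Q}$-vector space and the assertion is immediate; thus I would assume $Q\neq 1$ from now on.

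First I would identify $H_\ast(G,D)$. Since $\mathbb{Q}[Q]=\mathbb{Q}[G/M]=\mathrm{Ind}_M^G\mathbb{Q}$, Shapiro's Lemma gives $H_i(G,\mathbb{Q}[Q])\cong H_i(M,\mathbb{Q})$ compatibly with the $\mathbb{Q} Q$-module structures, where the structure on the right is the conjugation action of $Q=G/M$ on $H_\ast(M,\mathbb{Q})$. Because $M$ is free this reads $H_0=\mathbb{Q}$, $H_1=V$, and $H_i=0$ for $i\geq 2$. Regarding $\mathbb{Q}[Q]$ as a $(\mathbb{Q} G,\mathbb{Q} Q)$-bimodule and using flatness of $D$ over $\mathbb{Q} Q$, I would obtain $H_i(G,D)\cong H_i(M,\mathbb{Q})\otimes_{\mathbb{Q} Q}D$ from the isomorphism $\mathbb{Q}[Q]\otimes_{\mathbb{Q} Q}D\cong D$ of $\mathbb{Q} G$-modules. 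Since $Q\neq 1$, the augmentation ideal of $\mathbb{Q} Q$ contains an element $q-1\neq 0$ that is a unit of $D$, so $H_0(G,D)=\mathbb{Q}\otimes_{\mathbb{Q} Q}D=0$; moreover $H_1(G,D)=V\otimes_{\mathbb{Q} Q}D$ and $H_i(G,D)=0$ for $i\geq 2$. Consequently
\[
\sum_i (-1)^i \dim_D H_i(G,D) = -\dim_D\big(V\otimes_{\mathbb{Q} Q}D\big).
\]

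On the other hand, the hypothesis supplies a finite free resolution $F_\bullet\to\mathbb{Q}$ of finitely generated $\mathbb{Q} G$-modules, say $F_i=(\mathbb{Q} G)^{c_i}$. Tensoring with $D$ over $\mathbb{Q} G$ yields a finite complex of finite-dimensional $D$-vector spaces with terms $D^{c_i}$ that computes $H_\ast(G,D)$, so its Euler characteristic over $D$ equals $\sum_i(-1)^i c_i=\chi(G)$. Comparing the two computations gives $\dim_D(V\otimes_{\mathbb{Q} Q}D)=-\chi(G)>0$. Hence $V\otimes_{\mathbb{Q} Q}D\neq 0$: choosing $v\in V$ with $1\otimes v\neq 0$ in $D\otimes_{\mathbb{Q} Q}V$, the cyclic submodule $\mathbb{Q} Q\,v\subseteq V$ is free of rank one, since any relation $rv=0$ with $0\neq r\in\mathbb{Q} Q$ would force $1\otimes v=0$. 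This produces the desired nonzero free $\mathbb{Q} Q$-submodule of $V$.

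I expect the main obstacle to be the ring-theoretic input rather than the homological bookkeeping: one must know that $\mathbb{Q} Q$ is an Ore domain admitting a flat division ring of fractions, which is exactly where the torsion-freeness and nilpotence of $Q$ enter (through orderability, giving the domain property, and the Hall theorem on Noetherianity of polycyclic group rings, giving the Ore condition). The second delicate point is keeping track of the left/right $\mathbb{Q} Q$-module structures so that the coefficient module appearing in $H_1(G,D)$ is genuinely $V$ with its conjugation action. Once these are in place, the equality of the two Euler characteristics is legitimate precisely because the finiteness hypothesis on $G$ makes every relevant $D$-dimension finite.
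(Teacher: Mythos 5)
Your argument is correct: the identification $H_*(G,D)\cong H_*(M,\mathbb{Q})\otimes_{\mathbb{Q}Q}D$ via Shapiro's lemma and flatness of the Ore localization, the Euler characteristic count $\sum_i(-1)^i\dim_D H_i(G,D)=\chi(G)$ from the finite free resolution, and the conclusion that an element of $V$ surviving in $V\otimes_{\mathbb{Q}Q}D$ generates a free cyclic submodule all go through (the key ring-theoretic inputs --- $\mathbb{Q}Q$ a Noetherian domain, hence Ore with flat division ring of fractions --- are exactly as you say). Note that the paper itself gives no proof of this proposition but imports it from \cite{Desi}, and your argument is essentially the one given there, so there is nothing to contrast.
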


\begin{theorem} \label{qq43}Let $S  < G_1 \times \cdots \times G_m$ be a finitely generated subdirect product such that:\\[5pt]
(1) for each $ 1 \leq i \leq m$ the trivial $\mathbb{Q} G_i$-module $\mathbb{Q}$ has a free resolution of finite length with finitely generated modules;\\[5pt]
(2) for each $ 1 \leq i \leq m$ there is a normal free subgroup $L_i$ of $G_i$ such that $G_i\slash L_i$ is torsion-free nilpotent;\\[5pt]
(3) $L_1 \times \cdots \times L_m \subseteq S$;\\[5pt]
(4) $S$ is of type $FP_s$ over $\mathbb{Q}$;\\[5pt]
(5) $\chi(G_{i})<0$.

Then, for every canonical projection
$p_{j_1, \ldots,  j_s} \colon S \mapsto G_{j_1} \times \cdots \times G_{ j_ s}$
the index of $p_{j_1, \ldots, j_s} (S)$ in  $G_{j_1} \times \cdots \times G_{ j_s}$ is finite.
\end{theorem}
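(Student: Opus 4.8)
The plan is to derive Theorem~\ref{qq43} from the more general Theorem~\ref{new1} by checking that hypotheses (1)--(7) of \ref{new1} hold for $G_1,\dots,G_m$ and $S$. Three of them are immediate: hypothesis (1) of \ref{new1} is exactly hypothesis (2) here (torsion-free nilpotent is in particular nilpotent), hypothesis (6) is hypothesis (3), and hypothesis (7) follows from hypothesis (4), since being of type $FP_s$ over $\mathbb{Q}$ passes to every finite-index subgroup $S_0$ and forces $\dim_{\mathbb{Q}} H_i(S_0,\mathbb{Q})<\infty$ for all $i\le s$. Hypothesis (2) of \ref{new1} asks that each $G_i$ be finitely presented; by (1) each $G_i$ is of type $FP$ over $\mathbb{Q}$ and by (2) it is finitely generated and free-by-(finitely generated torsion-free nilpotent), and in the intended application the $G_i$ are limit groups over Droms RAAGs, hence finitely presented. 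The genuine content is therefore the three homological hypotheses (3), (4) and (5) of \ref{new1}, and the decisive inputs for these are Proposition~\ref{qq32} and Lemma~\ref{qq21}.

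I would treat (5) (\emph{$S$ virtually surjects on pairs}) as the base case $s=2$ of an induction and (3), (4) as the engine that drives it. Fix indices $j_1<\dots<j_s$, write $\mathcal{Q}=Q_{j_1}\times\cdots\times Q_{j_s}$ with $Q_i=G_i/L_i$, and set $V_i=H_1(L_i,\mathbb{Q})$, a finitely generated $\mathbb{Q}Q_i$-module under the conjugation action. Using $\chi(G_i)<0$ together with the finite free $\mathbb{Q}$-resolution, Proposition~\ref{qq32} supplies a nonzero free $\mathbb{Q}Q_i$-submodule $W_i\subseteq V_i$; tensoring over $\mathbb{Q}$ then yields a free cyclic $\mathbb{Q}\mathcal{Q}$-submodule $W_{j_1}\otimes\cdots\otimes W_{j_s}\subseteq V_{j_1}\otimes\cdots\otimes V_{j_s}$. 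To verify (3) I would argue by contraposition: for a subdirect product $M\subseteq G_{j_1}\times\cdots\times G_{j_s}$ of infinite index containing $\widetilde{L}:=L_{j_1}\times\cdots\times L_{j_s}$, suppose $H_i(M,\mathbb{Q})$ were finite dimensional for all $i\le s$; the Lyndon--Hochschild--Serre spectral sequence of the extension of $\overline{M}:=M/\widetilde{L}$ by the free normal subgroup $\widetilde{L}$, combined with the free decomposition (\ref{old2}) of $H_\ast(\widetilde{L},\mathbb{Q})$, would force $V_{j_1}\otimes\cdots\otimes V_{j_s}$ to be finitely generated as a $\mathbb{Q}\overline{M}$-module, whence Lemma~\ref{qq21} gives that $\overline{M}$, and hence $M$, has finite index, a contradiction. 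Condition (4) I would obtain by setting $N_{j_i}=\gamma_t(K_{j_i})$ for $t$ large enough that $\gamma_t(G_{j_i})\subseteq L_{j_i}$ is free and the lower-central term sits inside the finitely presented subdirect product, so that $K_{j_i}/N_{j_i}$ is nilpotent as required.

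The main obstacle is the step inside the verification of (3) that upgrades \emph{finite dimensionality} of $H_s$ to \emph{finite generation} of $V_{j_1}\otimes\cdots\otimes V_{j_s}$ over $\mathbb{Q}\overline{M}$: finite-dimensional coinvariants alone do not force finite generation, so Lemma~\ref{qq21} cannot be invoked naively. This is precisely where the induction enters, exactly as in the proof of Theorem~\ref{new1}: the inductive hypothesis (virtual surjection onto any $s-1$ of the factors) guarantees that $H_j(\widetilde{L},\mathbb{Q})$ is finitely generated over $\mathbb{Q}\overline{M}$ in all degrees $j\le s-1$, so in the spectral sequence only the top term $E^2_{0,s}$ remains in question, and there the finiteness can be read as finite generation of the relevant tensor product rather than as mere finite dimensionality of its coinvariants. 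Once (3), (4) and (5) are in place, Theorem~\ref{new1} applies and yields that every canonical projection $p_{j_1,\dots,j_s}(S)$ has finite index in $G_{j_1}\times\cdots\times G_{j_s}$, which is the assertion of Theorem~\ref{qq43}.
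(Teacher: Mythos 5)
You correctly identify Proposition~\ref{qq32} and Lemma~\ref{qq21} as the decisive inputs, but the route you choose --- deducing Theorem~\ref{qq43} from Theorem~\ref{new1} --- is not the paper's, and it does not close. Several hypotheses of Theorem~\ref{new1} are simply not available here: condition (2) (each $G_i$ finitely presented) does not follow from the stated hypotheses and you defer to ``the intended application''; condition (3) concerns \emph{arbitrary} infinite-index subgroups of $G_{j_1}\times\cdots\times G_{j_s}$, whereas your contrapositive only treats subdirect products containing $L_{j_1}\times\cdots\times L_{j_s}$; condition (4) requires that every finitely presented subdirect product of finite-index subgroups contain a product $N_{j_1}\times\cdots\times N_{j_s}$ with nilpotent quotients --- this is the deep ``virtually nilpotent quotient'' property of limit groups (\cite[Theorem 6.2]{Jone}) and has no analogue under hypotheses (1)--(5) of Theorem~\ref{qq43}; and condition (5), virtual surjection on pairs, is never established --- you call it ``the base case of an induction'' without proving it. Most seriously, the step you yourself flag as the main obstacle is not resolved by appealing to the induction ``exactly as in the proof of Theorem~\ref{new1}'': there, finite dimensionality of $E^2_{0,s}$ yields only finite dimensionality of the \emph{coinvariants} $V_{j_1}\otimes\cdots\otimes V_{j_s}\otimes_{\mathbb{Q}Q}\mathbb{Q}$ (equation~(\ref{old1})), and finite-dimensional coinvariants do not imply finite generation over $\mathbb{Q}Q$ (for $Q=\mathbb{Z}$ the module $\mathbb{Q}(t)$ over $\mathbb{Q}[t,t^{-1}]$ has zero coinvariants but is not finitely generated). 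Indeed, Theorem~\ref{new1} never invokes Lemma~\ref{qq21}; it circumvents precisely this issue via its conditions (3) and (4), which you cannot supply.

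The paper's actual proof is direct and uses hypothesis (4) of Theorem~\ref{qq43} in an essentially stronger way than finite dimensionality of homology of finite-index subgroups. Since $S$ is $FP_s$ over $\mathbb{Q}$, there is a free resolution $\mathcal{F}$ of the trivial $\mathbb{Q}S$-module $\mathbb{Q}$ with $F_i$ finitely generated for $i\leq s$. Applying $-\otimes_{\mathbb{Q}L}\mathbb{Q}$ with $L=L_1\times\cdots\times L_m\subseteq S$ produces a complex of \emph{finitely generated} modules over $\mathbb{Q}\widehat{Q}$, where $\widehat{Q}=S/L$ is finitely generated nilpotent, so $\mathbb{Q}\widehat{Q}$ is Noetherian; hence $H_s(L,\mathbb{Q})=\ker(d_s)/\text{im}(d_{s+1})$ is finitely generated as a $\mathbb{Q}\widehat{Q}$-module. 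By the K\"unneth formula each summand $V_{j_1}\otimes_{\mathbb{Q}}\cdots\otimes_{\mathbb{Q}}V_{j_s}$ is then finitely generated over $\mathbb{Q}\widehat{Q}$, the action factors through $\widetilde{Q}=p_{j_1,\dots,j_s}(S)/(L_{j_1}\times\cdots\times L_{j_s})$, and Lemma~\ref{qq21} --- fed by the nonzero free submodules from Proposition~\ref{qq32}, which is where $\chi(G_i)<0$ and hypothesis (1) enter --- gives that $\widetilde{Q}$ has finite index in $Q_{j_1}\times\cdots\times Q_{j_s}$, equivalently that $p_{j_1,\dots,j_s}(S)$ has finite index. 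This Noetherian argument is exactly the finite-generation statement your spectral-sequence step needs but cannot produce; replacing that step by it (and dropping the detour through Theorem~\ref{new1}) repairs the proof.
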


\begin{proof} 
Note that since $G_i$ is of type $FP_{\infty}$ over $\mathbb{Q}$, it is $FP_1$ over $\mathbb{Q}$ and so $G_i$ is finitely generated. By Proposition \ref{qq32}, $V_i \colon = (L_i/[L_i, L_i]) \otimes_{\mathbb{Z}} \mathbb{Q}$ has a non-zero free
$\mathbb{Q} Q_i$-submodule, where $Q_i \colon = G_i/ L_i$ acts via conjugation.

Let \[{\mathcal F} \colon \ldots \to F_i \to F_{i-1} \to \ldots \to F_0 \to \mathbb{Q} \to 0\]
be a free resolution of the trivial $\mathbb{Q} S$-module $\mathbb{Q}$ with $F_i$ finitely generated for $i \leq s$. We define $L$ to be  $L_1 \times \cdots \times L_m$ and since each $L_i$ is free, by the K$\ddot{\textrm{u}}$nneth formula, 
\[
H_s(L, \mathbb{Q}) = \bigoplus_{1 \leq j_1 < \dots < j_s \leq m} L_{j_1}\slash [L_{j_1}, L_{j_1}] \otimes_{\mathbb{Z}} \dots \otimes_{\mathbb{Z}}  L_{j_s}\slash [L_{j_s}, L_{j_s}] \otimes_{\mathbb{Z}} \mathbb{Q} \simeq\]\[ \bigoplus_{1 \leq j_1 < \dots < j_s \leq m} V_{j_1} \otimes_{\mathbb{Q}} \cdots \otimes_{\mathbb{Q}} V_{j_s}. \]
Note that
$$H_s(L, \mathbb{Q}) \simeq H_s (\mathcal{F} \otimes_{\mathbb{Q}L} \mathbb{Q}) = \ker(d_s)/ \text{im}(d_{s+1}),$$
where $d_j \colon F_j  \otimes_{\mathbb{Q}L} \mathbb{Q} \mapsto F_{j-1}  \otimes_{\mathbb{Q}L} \mathbb{Q}$ is the differential of $\mathcal{F} \otimes_{\mathbb{Q}L} \mathbb{Q}$. Since $F_s  \otimes_{\mathbb{Q}L} \mathbb{Q} $ is a finitely generated $\mathbb{Q}\widehat{Q}$-module, where $\widehat{Q} \colon = S/ L$ is a finitely generated nilpotent group and $\mathbb{Q}\widehat{Q}$ is a Noetherian ring, we deduce that $\ker(d_s)$ is a finitely generated $\mathbb{Q}\widehat{Q}$-module. In particular, $H_s(L, \mathbb{Q})$ and $V_{j_1} \otimes_{\mathbb{Q}} \cdots \otimes_{\mathbb{Q}} V_{j_s}$ are finitely generated $\mathbb{Q} \widehat{Q}$-modules. Note that the action of $\widehat{Q}$ on $V_{j_1} \otimes_{\mathbb{Q}} \cdots \otimes_{\mathbb{Q}} V_{j_s}$  factors through  $\widetilde{Q} \colon = p_{j_1, \ldots, j_s} (S) \slash L_{j_1} \times \cdots \times L_{j_s}$. Then, by Lemma \ref{qq21},  $\widetilde{Q}$ has finite index in $Q_{j_1} \times \cdots \times Q_{j_s}$. This is equivalent to $p_{j_1, \ldots, j_s} (S)$ having finite index in  $G_{j_1} \times \cdots \times G_{ j_s}$.

\end{proof}

The next step is to prove that limit groups over Droms RAAGs satisfy the conditions of Theorem \ref{qq43}.

\begin{lemma} \label{Euler-char} Let $G$ be a Droms RAAG and $\Gamma$ a limit group over $G$. Then, $\chi(\Gamma)\leq 0$. Furthermore, $\chi(\Gamma)=0$ if and only if
 $\Gamma$ has non-trivial center. The latter happens precisely when  $\Gamma = \mathbb{Z}^{l_0} \times \Lambda_0$ for some $l_0 \geq 1$ and with $\Lambda_0$ a centerless subgroup of $\Gamma$.
\end{lemma}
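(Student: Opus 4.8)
The plan is to argue by induction on the pair $(l(G), h(\Gamma))$, ordered lexicographically, using the structural descriptions of Propositions \ref{Graph tower 1} and \ref{Graph tower 2}. Since limit groups over Droms RAAGs are of type $FP_\infty$, the Euler characteristic $\chi$ is defined and finite throughout, and by Corollary \ref{free-by-(torsion-free nilpotent)} these groups are torsion-free. I would prove simultaneously: (I) $\chi(\Gamma) \le 0$; (II) if $Z(\Gamma) \ne 1$ then $\Gamma \cong \mathbb{Z}^l \times \Lambda$ with $l \ge 1$; and (III) if $Z(\Gamma) = 1$ then $\chi(\Gamma) < 0$. The asserted equivalences then follow formally: the implication ``$\Gamma \cong \mathbb{Z}^l \times \Lambda$ with $l \ge 1 \Rightarrow \chi(\Gamma)=0$'' is immediate from multiplicativity of $\chi$ under direct products, as $\chi(\mathbb{Z}^l \times \Lambda) = \chi(\mathbb{Z}^l)\chi(\Lambda) = 0$ for $l \ge 1$; combining this with (II) gives ``$Z(\Gamma)\ne 1 \Rightarrow \chi(\Gamma)=0$'', while (III) gives the converse.

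The two combinatorial tools are additivity of $\chi$ over a graph of groups, $\chi(\Gamma)=\sum_v \chi(\Gamma_v)-\sum_e\chi(\Gamma_e)$, and the free-product formula $\chi(A_1 \ast \cdots \ast A_j)=\sum_t \chi(A_t)-(j-1)$. For (I) and (III), note that in the decompositions of Propositions \ref{Graph tower 1} and \ref{Graph tower 2} every edge group is trivial or infinite cyclic, so $\chi(\Gamma_e)\in\{0,1\}$ and each edge contributes $\le 0$; every vertex group has $\chi \le 0$ by the inductive hypothesis (I); a free abelian factor contributes $0$ and a surface factor of Euler characteristic $\le -2$ contributes $<0$. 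Strictness in (III) comes from the fact that at least one vertex group is nonabelian (Proposition \ref{Graph tower 1}) or has trivial center (Proposition \ref{Graph tower 2}); by the inductive hypothesis (III) such a vertex group has strictly negative Euler characteristic, which is not cancelled by the remaining non-positive terms, so $\chi(\Gamma)<0$. In the height-$0$ free-product case one argues directly, since a free product of $j\ge 2$ nontrivial factors already forces $\chi\le-(j-1)<0$.

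For (II) I would use the structure theorems to locate the central $\mathbb{Z}$-factor. At level $0$, limit groups over free groups are commutative transitive, so the centralizer of a nontrivial element is abelian; a nontrivial center therefore forces $\Gamma$ to be abelian, i.e.\ $\Gamma\cong\mathbb{Z}^n$. At level $\ge 1$, Proposition \ref{Graph tower 2} already writes $\Gamma\cong\mathbb{Z}^l\times\Lambda$; if $l\ge1$ we are done, and if $l=0$ then $Z(\Lambda)=Z(\Gamma)\ne1$ and we examine $\Lambda$ by its height. When $h(\Lambda)=0$ and $\Lambda=A_1\ast\cdots\ast A_j$ with $j\ge2$ the center is trivial, a contradiction; hence $j=1$ and $\Lambda=A_1$ is a limit group over a Droms RAAG of strictly smaller level, to which the inductive hypothesis (II) applies directly.

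The main obstacle is the remaining case $l=0$ with $h(\Lambda)\ge1$, where I must show that a limit group $\Lambda$ which genuinely splits over trivial or cyclic edge groups, with at least one centerless vertex group, is itself centerless; this rules the case out. Here I would pass to the minimal $\Lambda$-invariant subtree $T$ of the Bass--Serre tree and take $1\ne z\in Z(\Lambda)$. If $z$ is elliptic, then $\mathrm{Fix}(z)$ is nonempty and, being $\Lambda$-invariant by centrality of $z$, equals $T$ by minimality; hence $z$ lies in every vertex group, in particular in the centerless one $\Lambda_{v_0}$, where it is central, forcing $z\in Z(\Lambda_{v_0})=1$, a contradiction. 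If $z$ is hyperbolic, its axis is a $\Lambda$-invariant line, so $T$ is a line; then the kernel of the action on $T$ is an edge group and the image is cocompact in $\mathrm{Isom}(\mathbb{R})$, so $\Lambda$ is torsion-free and virtually abelian of Hirsch length at most $2$. Every nontrivial subgroup of such a group has nontrivial center, contradicting the existence of the nontrivial centerless vertex group $\Lambda_{v_0}$. This tree analysis, together with the bookkeeping of Euler characteristics above, completes the induction.
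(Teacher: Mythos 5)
Your proposal is correct and follows essentially the same route as the paper's proof: induction on the level of $G$ and the height of $\Gamma$, with the Euler characteristic controlled by the free-product formula and by additivity over the graph-of-groups decompositions of Propositions \ref{Graph tower 1} and \ref{Graph tower 2}, and with strict negativity extracted from the distinguished nonabelian (resp.\ centerless) vertex group. You are somewhat more explicit than the paper about the equivalence with non-triviality of the center --- the paper leaves implicit the Bass--Serre tree argument that a genuine splitting with a centerless vertex group has trivial center --- but this is a refinement of the same argument rather than a different approach.
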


\begin{proof}
Let us prove it by induction on the level  $l(G)$ of $G$. If $l(G)=0$, then $G$ is a free group, so the result follows from \cite[Lemma 5]{Desi}.

Now assume that $l(G)\geq 1$. Then, $G$ equals \[ \mathbb{Z}^m \times (G_{1}\ast \cdots \ast G_{n}),\] where $m\in \mathbb{N} \cup \{0\}$ and $G_{i}$ is a Droms RAAG such that $l(G_{i}) \leq l(G)-1$ for $i\in \{1,\dots,n\}$. From Proposition \ref{Height} we get that $\Gamma$ is of the form $\mathbb{Z}^l \times \Lambda$ where $\Lambda$ is a limit group over $G_{1}\ast \cdots \ast G_{n}$ and if $m=0$, then $l=0$. Therefore, \[ \chi(\Gamma)= \chi(\mathbb{Z}^l) \chi(\Lambda),\]
so if $l\geq 1$, then $\chi(\Gamma)=0$. Let us compute $\chi(\Lambda)$. If the height of $\Lambda$ is $0$, i.e. $h(\Lambda)=0$, then \[ \Lambda= A_{1}\ast \cdots \ast A_{j},\] where for each $t\in \{1,\dots,j\}$ $A_{t}$ is a limit group over $G_{i}$ for some $i\in \{1,\dots,n\}$. Hence, \[ \chi(\Lambda)= \sum_{t\in \{1,\dots,j\} }\chi(A_{t}) -(j-1),\] so applying the inductive hypothesis, we get that $\chi(\Lambda) \leq 1-j$. If $j\geq 2$, then $\chi(\Lambda)< 0$. If $j=1$, $\chi(\Lambda)=\chi(A_{1})$ and $A_{1}$ is a limit group over $G_{i}$. Thus, by induction, $\chi(A_{1})\leq 0$ and $\chi(A_{1})=0$ if and only if $A_{1}$ has non-trivial center.  In this case the center $Z(A_1)$ is a direct factor of $A_1$.

If $h(\Lambda)\geq 1$, then $\Lambda$ acts cocompactly on a tree $T$ where the edge stabilizers are cyclic and the vertex groups are limit groups over $G_1\ast \cdots \ast G_n$ of height at most $h(\Lambda)-1$. Moreover, at least one vertex group $H_{v_{0}}$ has trivial center and so by inductive hypothesis, $\chi(H_{v_{0}}) < 0$.
If $X$ is the quotient graph $T \slash \Lambda$,
 \[ \chi(\Lambda)= \sum_{v\in V(X)} \chi(H_{v}) - \sum_{e\in E(X)}
  \chi(H_{e})  \leq  \sum_{v\in V(X)} \chi(H_{v} ) \leq \chi (H_{v_{0}} ) <0.\]
\end{proof} 

\begin{lemma}
Let $\Gamma$ be a limit group over a Droms RAAG. Then, the trivial $\mathbb{Q}\Gamma$-module $\mathbb{Q}$ has a free resolution with finitely generated modules and of finite length.
\end{lemma}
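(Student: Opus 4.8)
The plan is to show that $\Gamma$ is of type $FP$ over $\mathbb{Q}$ by combining two ingredients: finite cohomological dimension over $\mathbb{Q}$, which comes from the free-by-(torsion-free nilpotent) structure, and the already known finiteness property $FP_\infty$. First I would apply Corollary~\ref{free-by-(torsion-free nilpotent)} to obtain a short exact sequence $1 \to L \to \Gamma \to Q \to 1$ with $L$ free and $Q$ torsion-free nilpotent. Since $\Gamma$ is finitely generated, so is $Q$; a finitely generated torsion-free nilpotent group is torsion-free polycyclic, hence $\mathrm{cd}_{\mathbb{Q}}(Q) \leq \mathrm{cd}_{\mathbb{Z}}(Q) = h(Q) < \infty$, where $h(Q)$ is the Hirsch length. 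As $L$ is free we have $\mathrm{cd}_{\mathbb{Q}}(L) \leq 1$, and by subadditivity of cohomological dimension along the extension (via the Lyndon--Hochschild--Serre spectral sequence) I conclude $\mathrm{cd}_{\mathbb{Q}}(\Gamma) \leq 1 + h(Q) < \infty$. It is worth noting that $L$ is in general \emph{not} finitely generated, which is why one cannot simply splice together finite free resolutions over $\mathbb{Q}L$ and $\mathbb{Q}Q$; the finiteness must come from elsewhere.

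The second ingredient is that limit groups over Droms RAAGs are of type $FP_{\infty}$ (established by Casals-Ruiz, Duncan and Kazachkov, \cite{Montse}), hence $FP_{\infty}$ over $\mathbb{Q}$. A group that is $FP_{\infty}$ over $\mathbb{Q}$ and has finite $\mathrm{cd}_{\mathbb{Q}}$ is of type $FP$ over $\mathbb{Q}$: building a resolution of the trivial module from below, I can choose finitely generated free $\mathbb{Q}\Gamma$-modules $F_0, \dots, F_{n-1}$ at each finite stage (possible because $\Gamma$ is $FP_{\infty}$), where $n = \mathrm{cd}_{\mathbb{Q}}(\Gamma)$; the $n$-th syzygy is then projective because the cohomological dimension equals $n$, and finitely generated because $\Gamma$ is $FP_{\infty}$. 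This already yields a finite-length resolution by finitely generated modules that is free in every degree except possibly the top one.

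The remaining point, and the step I expect to be the main obstacle, is upgrading the top finitely generated \emph{projective} module to a \emph{free} one. The standard absorption argument handles the intermediate terms: for each finitely generated projective $P_i$ one picks $P_i'$ with $P_i \oplus P_i'$ free and adds $P_i'$ as a trivial direct summand to both $P_i$ and the next module up, working up the resolution; this makes every module free at the cost of leaving a single finitely generated projective at the top, which becomes free precisely when it is stably free. Whether this holds is governed by the finiteness obstruction in $\widetilde{K}_0(\mathbb{Q}\Gamma)$, and this is where I would spend the most care. I expect it to be resolvable, for instance by verifying that the projectives occurring here are stably free, or by exhibiting a finite aspherical classifying space for $\Gamma$ (so that the cellular chain complex of its universal cover is already a finite free resolution over $\mathbb{Z}\Gamma$, and hence over $\mathbb{Q}\Gamma$). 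In any case, for the applications that use this lemma, namely the Euler-characteristic computations in Proposition~\ref{qq32} and Theorem~\ref{qq43}, a finitely generated projective resolution of finite length is all that is actually invoked, so the free-versus-projective distinction does not affect the downstream results.
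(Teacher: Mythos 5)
Your proposal follows essentially the same route as the paper, whose entire proof is the one-line observation that limit groups over Droms RAAGs are of type $FP_{\infty}$ over $\mathbb{Z}$ (hence over $\mathbb{Q}$) and of finite cohomological dimension. Your extra care over the free-versus-projective distinction at the top of the resolution addresses a point the paper's one-liner silently elides, and your suggested fix (a finite $K(\Gamma,1)$, i.e.\ the type-$F$ property the paper invokes elsewhere, whose cellular chain complex is already a finite free resolution) is the standard way to close it; the underlying argument is otherwise identical.
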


\begin{proof}
Limit groups over Droms RAAGs are of type $FP_{\infty}$ over $\mathbb{Z}$ (and so over $\mathbb{Q}$) and of finite cohomological dimension.
\end{proof}

\begin{proof}[Proof of Theorem \ref{FPs}]

By \cite[Theorem 6.2]{Jone}, $G_i\slash (S \cap G_i)$ is virtually nilpotent for every $i\in \{1,\dots,m\}$.
By substituting $G_i$ and $S$ with subgroups of finite index if necessary we can assume that
  $G_i\slash (S \cap G_i)$ is torsion-free nilpotent.
 By Proposition~\ref{free-by-(torsion-free nilpotent)},  there is a  normal free subgroup $M_i$ de $G_i$ such that $G_i / M_i$ is torsion-free nilpotent, hence for $L_i = S \cap M_i$ we have that  $L_i$ is a normal subgroup of $G_i$ such that $L_i$ is free, $G_i\slash L_i$ is torsion-free nilpotent and $L_1 \times \cdots \times L_{m} \subseteq S$. Note that  conditions 1 and 6 from Theorem \ref{new1} hold. The other conditions from Theorem \ref{new1} hold when each $G_i$ is a limit group over a Droms RAAG with trivial center: condition 2 is \cite[Corollary 7.8]{Montse},  condition 3 is Theorem \ref{Theorem 8.1}, condition 5 is Theorem \ref{Theorem 10.1}.

Observe that a finite index subgroup of a limit group over a Droms RAAG is a limit group over a Droms RAAG. Hence condition 4 follows from \cite[Theorem 6.2]{Jone}. Finally, condition 7 is assumed in the statement.
\end{proof}

\section{On the $L^2$-Betti numbers and volume gradients of limit groups over Droms RAAGs and their subdirect products}
\label{Section 6}

The aim of this section is to study the growth of homology groups and the volume gradients for limit groups over Droms RAAGs and for finitely presented residually Droms RAAGs, following the paper \cite{Desi2}. Some of the results concerning limit groups over Droms RAAGs hold in a more general setting, more precisely, they also hold for limit groups over coherent RAAGs. Thus, these results (see Theorem \ref{Theorem C} and Theorem \ref{Theorem D}) will be stated for limit groups over coherent RAAGs.
However, the results for finitely presented residually Droms RAAGs make use of Theorem \ref{Theorem 10.1} and in \cite{Jone-Montse} it was shown that this no longer holds for coherent RAAGs. Thus, Theorem \ref{Theorem E} and Theorem \ref{Theorem F} are stated just for residually Droms RAAGs.

In order to study the homology growth and volume gradients, we work with \emph{exhausting normal chains}: a chain $(B_{n})$ of normal subgroups of finite index such that $B_{n+1} \subseteq B_{n}$ and $\bigcap_{n} B_{n}=1$. Note that if a group is residually finite, then it has an exhausting normal chain. In particular, limit groups over coherent RAAGs have exhausting normal chains.

 A group $G$ is of homotopical type $F_m$ if there is a classifying space $K(G,1)$ with finite $m$-skeleton.
Given a group $G$ of homotopical type $F_{m}$, $\text{vol}_{m}(G)$ is defined to be the least number of $m$-cells among all classifying spaces $K(G,1)$ with finite $m$-skeleton. For instance, $\text{vol}_{1}(G)$ equals $d(G)$, where $d(G)$ is the minimal number of generators of $G$.

One of the aims of this section is to prove Theorem \ref{Theorem C} and Theorem \ref{Theorem D}.
These two results are proved in \cite{Desi2} for limit groups over free groups via more technical results that make use of \emph{slowness} of limit groups over free groups (see Section \ref{subsection slow} for the definition). In \cite{Desi2} it is shown that limit groups over free groups are slow above dimension $1$ and hence are  $K$-slow above dimension $1$  for any field $K$. Thus, the key point is to show that limit groups over coherent RAAGs are also slow above dimension $1$  (see Section \ref{coherent}). 

\begin{theorem}\cite[Theorem D]{Desi2}\label{Theorem D2}
If a residually finite group $G$ of type $F$ is slow above dimension $1$, then with respect to every exhausting normal chain $(B_{n})$,\\[5pt]
(1) Rank gradient: \[ RG(G, (B_{n}))= \lim_{n\to \infty} \frac{d(B_{n})}{[G \colon B_{n}]}= -\chi(G).\]
(2) Deficiency gradient: \[ DG(G, (B_{n}))= \lim_{n\to \infty} \frac{\text{def}(B_{n})}{[G \colon B_{n}]} = \chi(G).\]
\end{theorem}

\begin{lemma}\cite[Lemma 5.2]{Desi2} \label{homology-slow-limit-free} 
Let $K$ be a field and let $G$ be a residually finite group of type $F$ with an exhausting normal chain $(B_n)$. If $G$ is $K$-slow above dimension $1$, then \[ \lim_{n\to \infty} \frac{\dim H_{1}(B_n,K)}{[G \colon B_n]}= -\chi(G).\]
\end{lemma}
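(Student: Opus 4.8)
The plan is to deduce the $H_1$-gradient entirely from Euler-characteristic bookkeeping, invoking the $K$-slow hypothesis only to discard the contributions of the dimensions $\geq 2$. Since $G$ is of type $F$ it has a finite classifying space, so $d \coloneqq \mathrm{cd}(G) < \infty$; every finite-index subgroup $B_n$ is again of type $F$ with $\mathrm{cd}(B_n) = d$, whence each $H_i(B_n, K)$ is finite dimensional over $K$ and vanishes for $i > d$. In particular all the quantities that appear are finite and the relevant alternating sums have at most $d+1$ terms, a bound independent of $n$; this uniform bound is exactly what will make the termwise passage to the limit legitimate.

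First I would record the two standard identities, valid for each fixed $n$. Over a field $K$ the Euler characteristic of a group of type $F$ equals the alternating sum of its $K$-Betti numbers (compute $\chi$ from a finite free resolution, and use that the Euler characteristic of a bounded complex of finite-dimensional $K$-spaces equals that of its homology), and $\chi$ is multiplicative along finite-index subgroups. Hence
\[\sum_{i=0}^{d} (-1)^i \dim_K H_i(B_n, K) = \chi(B_n) = [G \colon B_n]\,\chi(G).\]
Since $H_0(B_n, K) \cong K$, isolating the $i = 1$ term and dividing by $[G \colon B_n]$ yields
\[\frac{\dim_K H_1(B_n, K)}{[G \colon B_n]} = \frac{1}{[G \colon B_n]} - \chi(G) + \sum_{i=2}^{d} (-1)^i \frac{\dim_K H_i(B_n, K)}{[G \colon B_n]}.\]

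To finish I would let $n \to \infty$. The groups under consideration are infinite (type $F$ forces $G$ to be torsion-free, the trivial group being a degenerate case one discards), and $(B_n)$ is a descending exhausting chain, so the indices $[G \colon B_n]$ are nondecreasing and tend to infinity; thus the first term on the right tends to $0$. Each summand with $2 \leq i \leq d$ tends to $0$ precisely by the hypothesis that $G$ is $K$-slow above dimension $1$. As the sum has a fixed finite number of terms, the right-hand side converges to $-\chi(G)$, so the left-hand limit exists and equals $-\chi(G)$, as claimed.

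The computation is routine; the single point deserving care is the legitimacy of moving the limit inside the finite alternating sum, which rests on $\mathrm{cd}(G) < \infty$ so that the number of terms is bounded uniformly in $n$ — this is exactly where the type-$F$ hypothesis is used. The genuine content of the lemma is repackaged into the $K$-slow hypothesis, whose verification for the groups of interest (limit groups over coherent RAAGs) is the substantive work done elsewhere.
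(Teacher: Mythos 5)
Your argument is correct and is essentially the standard proof: the paper itself quotes this lemma from \cite{Desi2} without reproving it, and the proof given there is exactly this Euler-characteristic bookkeeping (multiplicativity of $\chi$ on finite-index subgroups, $\chi(B_n)=\sum_i(-1)^i\dim_K H_i(B_n,K)$, and the $K$-slow hypothesis killing the terms in degrees $\geq 2$). Your attention to the uniform bound on the number of terms and to the degenerate case of the trivial group (where the statement is vacuously excluded) is appropriate; nothing further is needed.
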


We state other results from \cite{Desi2} that will be important for us, as we will show that residually Droms RAAGs that are of type $FP_m$ for some $m \geq 2$ satisfy virtually the assumptions of Theorem \ref{residual}.

\begin{theorem}\cite[Theorem F]{Desi2} \label{residual}
Let $G \subseteq G_{1}\times \cdots \times G_{k}$ be a subdirect product of residually finite groups of type $F$, each of which contains a normal free subgroup $F_{i} < G_{i}$ such that $G_{i} \slash F_{i}$ is torsion-free and nilpotent. Assume $F_{i} \subseteq G \cap G_i$. Let $m<k$ be an integer, let $K$ be a field, and suppose that each $G_{i}$ is $K$-slow above dimension $1$.

If the projection of $G$ to each $m$-tuple of factors $G_{j_{1}} \times \cdots \times G_{j_{m}}$ is of finite index, then there exists an exhausting normal chain $(B_n)$ in $G$ so that for $0\leq j \leq m$, \[ \lim_{n\to \infty} \frac{\dim H_{j}(B_n,K)}{[G \colon B_n]}=0.\]
\end{theorem}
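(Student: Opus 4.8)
The plan is to exploit the extension structure that the hypotheses force on $G$. Set $L \coloneqq F_1 \times \cdots \times F_k$; since each $F_i$ is normal in $G_i$ and $F_i \subseteq G \cap G_i$, the $F_i$ commute pairwise, $L$ is a normal subgroup of $G$ contained in $G$, and the quotient $Q \coloneqq G\slash L$ embeds into $(G_1\slash F_1) \times \cdots \times (G_k\slash F_k)$. Hence $Q$ is finitely generated, torsion-free nilpotent, and in particular amenable, and $G$ sits in a short exact sequence $1 \to L \to G \to Q \to 1$ with quotient map $\pi \colon G \to Q$, where $L$ is a direct product of free groups. This is the structure I would leverage: the free direction $L$ is governed by the slowness of the factors $G_i$, while the nilpotent direction $Q$ is governed by its amenability.

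Next I would record how the $m$-tuple hypothesis constrains the homology of $L$. Writing $Q_i \coloneqq G_i\slash F_i$ and $V_i \coloneqq H_1(F_i, K)$, the Künneth formula (each $F_i$ being free) gives
\[
H_q(L, K) \simeq \bigoplus_{1 \le j_1 < \cdots < j_q \le k} V_{j_1} \otimes_K \cdots \otimes_K V_{j_q},
\]
and each summand is a $KQ$-module whose action factors through $Q \to Q_{j_1} \times \cdots \times Q_{j_q}$. Since the projection of $G$ to any $q$ of the factors ($q \le m$) has finite index, the image of $Q$ in $Q_{j_1} \times \cdots \times Q_{j_q}$ has finite index; combined with the finite generation of each $V_{j_r}$ as a $KQ_{j_r}$-module (which follows from $G_{j_r}$ being of type $FP_\infty$), this shows that $H_q(L, K)$ is a finitely generated $KQ$-module for every $q \le m$. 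This is the precise point at which the $m$-tuple hypothesis enters, and it mirrors the finite-generation step in the proof of Theorem \ref{new1}.

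I would then build the chain $(B_n)$ by interleaving two exhausting chains at carefully tuned rates. Using residual finiteness of $Q$, fix an exhausting normal chain $(\overline{B}_n)$ of $Q$; using residual finiteness and slowness of the $G_i$, fix a cofinal chain of finite-index subgroups exhausting $L$. I would define $B_n$ (passing to normal cores in $G$ to keep normality) so that $\pi(B_n)$ is a deep term $\overline{B}_{t(n)}$ and $L \cap B_n$ is a deep term of the $L$-chain, choosing the rate $t(n)$ so that $[Q \colon \pi(B_n)] \to \infty$. This is what dilutes both directions simultaneously: the amenability of $Q$ forces the positive-degree homology gradient in the $Q$-direction to vanish, while $\dim_K H_q(L \cap B_n, K)$ grows only linearly in $[L \colon L \cap B_n]$ by additivity of the Euler characteristic on finite-index subgroups of products of free groups.

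Finally, to estimate the homology I would run the Lyndon--Hochschild--Serre spectral sequence $E^2_{p,q} = H_p(\pi(B_n), H_q(L \cap B_n, K)) \Rightarrow H_{p+q}(B_n, K)$, so that $\dim_K H_j(B_n, K) \le \sum_{p+q=j} \dim_K H_p(\pi(B_n), H_q(L \cap B_n, K))$ for $j \le m$. Dividing by $[G \colon B_n] = [Q \colon \pi(B_n)] \cdot [L \colon L \cap B_n]$, the coinvariant terms ($p=0$) are bounded by $\dim_K H_q(L \cap B_n, K) = O([L \colon L \cap B_n])$ and are killed because $[Q \colon \pi(B_n)] \to \infty$, while the positive-degree terms ($p \ge 1$) are killed by the amenable approximation applied to the finitely generated $KQ$-modules $H_q(L \cap B_n, K)$. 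The main obstacle, and the step I would spend the most care on, is exactly this last estimate: making the amenable approximation for the \emph{positive-degree} homology of these finitely generated $KQ$-modules uniform enough over an \emph{arbitrary} field $K$ to be diluted by $[G \colon B_n]$, and checking that the rate $t(n)$ can be chosen to make every term with $p+q \le m$ vanish in the limit.
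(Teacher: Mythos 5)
A preliminary remark: the paper gives no proof of Theorem~\ref{residual} at all --- it is quoted verbatim from \cite[Theorem F]{Desi2} and used as a black box --- so the only proof your attempt can be measured against is the one in Bridson--Kochloukova. Your skeleton does match the strategy there: the extension $1 \to L \to G \to Q \to 1$ with $L = F_1 \times \cdots \times F_k$ and $Q$ finitely generated torsion-free nilpotent, the K\"unneth decomposition of $H_q(L,K)$ into $Q$-invariant summands, the observation that the $m$-tuple hypothesis makes these summands finitely generated over $KQ$ for $q \leq m$, and the Lyndon--Hochschild--Serre spectral sequence are all the right ingredients.

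The way you close the estimate, however, has genuine gaps. (i) The bound $\dim_K H_q(L \cap B_n, K) = O([L : L \cap B_n])$ ``by additivity of the Euler characteristic'' fails: nothing forces the $F_i$ to be finitely generated, and in the intended applications (e.g.\ $F_i = \gamma_t(G_i)$ for a limit group $G_i$) they are free of infinite rank, so $L$ is not of type $F$ and these dimensions are infinite. (ii) After intersecting with $B_n$, the finite-index subgroup $L \cap B_n$ of the product $L$ need not itself be a direct product of normal subgroups of the $F_i$, so neither the K\"unneth decomposition nor the finite generation over the group ring of the nilpotent quotient transfers to $H_q(L \cap B_n, K)$; to retain control one has to build the chain ``rectangularly,'' e.g.\ $B_n = G \cap (N_n^{(1)} \times \cdots \times N_n^{(k)})$ with $N_n^{(i)} \trianglelefteq G_i$ and $N_n^{(i)} \subseteq F_i$. (iii) Your argument never uses the quantitative content of the hypothesis that each $G_i$ is $K$-slow above dimension $1$ (you invoke it only to ``fix a chain''); since that hypothesis is exactly what controls the degree~$\geq 2$ homology of the factors uniformly along the tower, an argument that does not use it cannot be complete.

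Most importantly, the division of labour you propose in the spectral sequence is inverted. For a finitely generated $KQ$-module $W$ over a finitely generated nilpotent group $Q$ and an exhausting normal chain $(Q_n)$, the ratio $\dim_K H_0(Q_n, W)/[Q : Q_n]$ does \emph{not} tend to $0$ in general --- for $W = KQ$ it is identically $1$, and by Proposition~\ref{qq32} the modules $V_{j_1} \otimes \cdots \otimes V_{j_q}$ really do contain nonzero free submodules, so their coinvariants genuinely grow linearly in the index. Hence the $p = 0$ terms are the delicate ones and cannot be ``killed because $[Q : \pi(B_n)] \to \infty$''; the vanishing must come from the joint limit in which $L \cap B_n$ shrinks at a rate tuned against $[Q : \pi(B_n)]$. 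Symmetrically, there is no off-the-shelf ``amenable approximation'' for positive-degree homology with finitely generated $KQ$-module coefficients over an arbitrary field; this is precisely the technical content of the auxiliary lemmas in \cite{Desi2} that your sketch defers to the end. You have correctly located where the difficulty sits, but the proposal as written does not resolve it.
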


\begin{theorem} \cite[Theorem G]{Desi2} \label{residual2} Every finitely presented residually free group $G$ that is not a limit group over a free group admits an exhausting normal chain $(B_n)$  with respect to which the rank gradient \[ RG(G,(B_n))= \lim_{n\to \infty} \frac{d(B_n)}{[G \colon B_n]}=0.\]
Furthermore, if $G$ is of type $FP_3$ but it is not commesurable with a product of two limit groups over free groups, $(B_n)$ can be chosen so that the deficiency gradient $DG(G,(B_n))=0.$
\end{theorem}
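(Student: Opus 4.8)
The plan is to realize $G$ concretely as a subdirect product and then play the growth of the index against the growth of the number of generators. First I would combine the result of Baumslag--Miasnikov--Remeslennikov with the neat-embedding machinery of Bridson--Howie--Miller--Short \cite{Bridson,Bridson2} to embed $G$ as a full subdirect product $G \hookrightarrow \Gamma_0 \times \Gamma_1 \times \cdots \times \Gamma_k$, where $\Gamma_0$ is abelian (with $G \cap \Gamma_0$ of finite index in $\Gamma_0$) and each $\Gamma_i$ for $i \geq 1$ is a non-abelian limit group over free groups. Since $G$ is finitely presented it virtually surjects on pairs, so after passing to a finite-index subgroup the projection of $G$ to every pair $\Gamma_i \times \Gamma_j$ has finite index. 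The hypothesis that $G$ is not a limit group forces $k \geq 2$, and the stronger hypothesis in the second part (not commensurable with a product of two limit groups) forces $k \geq 3$. Each $\Gamma_i$ is slow above dimension $1$ by \cite{Desi2} and, by \cite{Desi}, carries a normal free subgroup $F_i$ with $\Gamma_i/F_i$ torsion-free nilpotent; after a further finite-index passage we may assume $F_i \subseteq G$.

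Next I would build the chain explicitly. Fix exhausting normal chains $(H_i^{(n)})_n$ in each $\Gamma_i$ with $F_i \subseteq H_i^{(n)}$, and set $B_n \coloneqq G \cap (\Gamma_0^{(n)} \times H_1^{(n)} \times \cdots \times H_k^{(n)})$, which is an exhausting normal chain in a finite-index subgroup of $G$. Writing $J_i \coloneqq [\Gamma_i : H_i^{(n)}]$, two estimates drive the argument. On one hand, because the projection of $G$ to each pair has finite index, $[G:B_n] \gtrsim J_i J_j$ for every pair $i \neq j$, so the index grows at least like a product of two (respectively three) of the factor indices. On the other hand, $B_n$ is again a full subdirect product containing $F^{(n)} \coloneqq (F_1 \cap H_1^{(n)}) \times \cdots \times (F_k \cap H_k^{(n)})$, with $B_n / F^{(n)}$ embedding in the fixed finitely generated nilpotent group $\prod_i \Gamma_i/F_i$; hence $d(B_n) \leq d(B_n/F^{(n)}) + \sum_i d(F_i \cap H_i^{(n)})$, where the first summand is bounded by a constant and, by slowness of $\Gamma_i$ above dimension $1$, $d(F_i \cap H_i^{(n)}) \asymp -\chi(F_i \cap H_i^{(n)}) \asymp J_i$. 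Thus $d(B_n) \lesssim \sum_i J_i$, and choosing the chains so that at least two $J_i \to \infty$ yields $RG(G,(B_n)) = \lim_n d(B_n)/[G:B_n] \leq \lim_n (\sum_i J_i)/(J_i J_j) = 0$.

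For the deficiency gradient I would sandwich $\text{def}(B_n)$ between two quantities of zero growth. The presentation $2$-complex gives the universal bound $\text{def}(B_n) \leq \dim_{\mathbb{Q}} H_1(B_n,\mathbb{Q}) - \dim_{\mathbb{Q}} H_2(B_n,\mathbb{Q}) \leq d(B_n)$, which already forces $\limsup_n \text{def}(B_n)/[G:B_n] \leq 0$ via the rank bound above. For the reverse inequality I would realize $B_n$ by a $K(B_n,1)$ with $d(B_n)$ one-cells and $\text{vol}_2(B_n)$ two-cells, so that $\text{def}(B_n) \geq d(B_n) - \text{vol}_2(B_n)$, and then show $\text{vol}_2(B_n)/[G:B_n] \to 0$ by repeating the counting one dimension higher: the two-cells split into single-factor contributions (growing sublinearly in $J_i$, since $\Gamma_i$ is slow above dimension $1$) and pairwise cross-terms (growing like $J_i J_j$), all dominated by $[G:B_n] \gtrsim J_i J_j J_\ell$ once $k \geq 3$; hence $DG(G,(B_n)) = 0$. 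The main obstacle is precisely this homotopical, product-versus-sum bookkeeping: bounding $d(B_n)$ and $\text{vol}_2(B_n)$ \emph{from above} requires the homotopical slowness of limit groups over free groups above dimension $1$, together with the fact that the free-by-nilpotent structure and the finite-index-on-tuples condition make the total index grow like a product while the complexity grows only like a sum. This is the homotopical refinement of Theorem \ref{residual} from \cite{Desi2}; note that the purely homological statement of Theorem \ref{residual} does not suffice on its own, since it controls $\dim_{\mathbb{Q}} H_1(B_n,\mathbb{Q})$ from above but bounds $d(B_n)$ only from below.
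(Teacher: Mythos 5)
This statement is quoted verbatim from \cite[Theorem G]{Desi2}; the paper under review gives no proof of it (it only records, in the proof of Theorem F, that the argument in \cite{Desi2} rests on two general lemmas about residually finite groups, \cite[Lemmas 8.1 and 8.2]{Desi2}, together with the virtual-surjection results). So your proposal can only be judged on its own terms. Your high-level mechanism --- the index $[G\colon B_n]$ grows at least like a product $J_iJ_j$ of two (resp.\ three) factor indices while the ``complexity'' of $B_n$ should grow only like the sum $\sum_i J_i$ --- is the correct heuristic, and the reductions (neat embedding, $k\geq 2$ resp.\ $k\geq 3$, virtual surjection to pairs) are sound. But the central upper bound is broken.

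The gap is the estimate $d(B_n)\leq d(B_n/F^{(n)})+\sum_i d(F_i\cap H_i^{(n)})$ together with $d(F_i\cap H_i^{(n)})\asymp -\chi(F_i\cap H_i^{(n)})\asymp J_i$. The normal free subgroups $F_i\trianglelefteq\Gamma_i$ with $\Gamma_i/F_i$ torsion-free nilpotent necessarily have \emph{infinite} index in $\Gamma_i$ (a nontrivial torsion-free nilpotent group is infinite), and such $F_i$ are infinitely generated free groups: already for $\Gamma_i=F_2$ the subgroup $\gamma_c(F_2)$, $c\geq 2$, is free of infinite rank. Hence $F_i\cap H_i^{(n)}$ is infinitely generated, $d(F_i\cap H_i^{(n)})=\infty$, its Euler characteristic is undefined, and the multiplicativity of $\chi$ is inapplicable because $J_i=[\Gamma_i\colon H_i^{(n)}]$ measures index in $\Gamma_i$, not in $F_i$. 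Slowness of $\Gamma_i$ above dimension $1$ controls cell counts of classifying spaces of the \emph{finite-index} subgroups $H_i^{(n)}$, and says nothing about $F_i\cap H_i^{(n)}$. (There is also an internal inconsistency: a chain $(H_i^{(n)})_n$ with $F_i\subseteq H_i^{(n)}$ for all $n$ cannot be exhausting when $F_i\neq 1$.) This is exactly the obstruction that the published proof must route around: one cannot generate $B_n$ ``from the bottom up'' through the free kernels; instead \cite[Lemmas 8.1, 8.2]{Desi2} exploit finite generation, respectively finite presentability, of $G$ itself together with the normal subgroup $\prod_i(G\cap\Gamma_i)$, whose quotient is infinite and virtually nilpotent when $G$ has infinite index in the product, descending far in the nilpotent direction so that $G/B_n$ acquires cyclic quotients of unbounded order and $d(B_n)$ is forced to be small relative to $[G\colon B_n]$; the finite-index case is handled separately. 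Your deficiency argument inherits the same gap one dimension up: the claimed splitting of $\mathrm{vol}_2(B_n)$ into single-factor and pairwise contributions would require presenting $B_n$ through the infinitely generated kernels, which is precisely what cannot be done naively.
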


\subsection{Preliminaries on groups that are $K$-slow above dimension $1$ and slow above dimension $1$}
\label{subsection slow}
Let us start recalling the definitions from \cite{Desi2} about slowness and $K$-slowness.

\begin{definition}
Let $G$ be a group. A sequence of non-negative integers $(r_{j})_{j\geq 0 }$ is a \emph{volume vector} for $G$ if there is a classifying space $K(G,1)$ that, for all $j\in \mathbb{N}$, has exactly $r_{j}$  $j$-cells.
\end{definition}

\begin{definition}
A group $G$ of homotopical type $F$ is \emph{slow above dimension $1$} if it is residually finite and for every exhausting normal chain $(B_n)$, there exist volume vectors $(r_{j}(B_{n}))_{j}$ for $B_n$ with finitely many non-zero entries, so that \[ \lim_{n\to \infty} \frac{r_{j}(B_{n})}{[G \colon B_{n}]} =0,\] for all $j\geq 2$.
$G$ is \emph{slow} if it satisfies the additional requirement that the limit exists and is zero for $j=1$ as well.
\end{definition}

\begin{example}\cite[Examples 4.4]{Desi2}\label{Example Desi}
Finitely generated torsion-free nilpotent groups are slow. The trivial group is slow. Free groups are slow above dimension $1$. Surface groups are slow above dimension $1$.
\end{example}

\begin{proposition}\label{Proposition 1}\cite[Proposition 4.5]{Desi2}
If a residually finite group $G$ is the fundamental group of a finite graph of groups where all of the edge groups are slow and all of the vertex groups are slow above dimension $1$, then $G$ is slow above dimension $1$.
\end{proposition}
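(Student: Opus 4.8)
The plan is to prove Proposition~\ref{Proposition 1} by reducing the statement about a graph of groups $G = \pi_1(\mathcal{G}, \Gamma)$ to a statement about an exhausting normal chain, using the fact that slowness is inherited by finite-index subgroups together with the Mayer--Vietoris machinery for graphs of groups. Let $(B_n)$ be an exhausting normal chain in $G$. Each $B_n$ is itself the fundamental group of a finite graph of groups (a covering graph of groups of $\mathcal{G}$), whose vertex groups are finite-index subgroups of conjugates of the vertex groups of $\mathcal{G}$, and whose edge groups are finite-index subgroups of conjugates of the edge groups. Since being slow (respectively slow above dimension~$1$) is inherited by finite-index subgroups, the vertex groups of this decomposition of $B_n$ are slow above dimension~$1$ and the edge groups are slow. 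This lets me build an efficient classifying space for $B_n$ from the induced structure.

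First I would set up the combinatorics of the covering. Writing $X$ for the underlying graph of $\mathcal{G}$ with vertex set $V(X)$ and edge set $E(X)$, the covering space of the Bass--Serre tree quotient corresponding to $B_n$ gives a finite graph of groups $X_n$ with $\bigl[G : B_n\bigr]$ sheets in total; the number of vertices of $X_n$ lying over a vertex $v$ is $\bigl[G : B_n\bigr] \big/ \bigl[G_v : (G_v \cap B_n^{g})\bigr]$-type data, but the key accounting fact I will use is that the total number of cells contributed by the vertex and edge pieces, summed over all lifts, is what must be estimated. The plan is to assemble a $K(B_n,1)$ by taking, for each vertex orbit representative, an efficient classifying space (with volume vector satisfying the slowness estimate) for the corresponding finite-index subgroup of the vertex group, similarly for edges, and gluing according to the graph-of-groups structure via mapping cylinders. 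Concretely, I would invoke the standard construction: if $(r_j^{v})$ and $(r_j^{e})$ are volume vectors realizing slowness for the vertex and edge subgroups appearing in $X_n$, then a classifying space for $B_n$ has a volume vector $(r_j(B_n))$ with
\[
r_j(B_n) \;\le\; \sum_{\text{vertices of } X_n} r_j^{v} \;+\; \sum_{\text{edges of } X_n} \bigl( r_j^{e} + r_{j-1}^{e} \bigr)
\]
for $j \ge 2$, the shift in the edge terms coming from the mapping cylinder construction.

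Next I would divide by $\bigl[G : B_n\bigr]$ and take limits. For a fixed vertex $v$ of $X$, the lifts over $v$ correspond to subgroups $G_v \cap B_n^{g}$ which form, after passing to a cofinal reindexing, an exhausting normal chain in $G_v$ (or in each finite-index conjugate), so the slowness of $G_v$ above dimension~$1$ gives that the vertex contribution to $r_j(B_n)/[G:B_n]$ tends to $0$ for $j \ge 2$; the bookkeeping that makes this work is that $\sum_{\text{lifts of } v} 1 = [G:B_n]/[\,\text{stabilizer index}\,]$ matches the denominator in the slowness limit for the induced chain on $G_v$. The edge contributions, involving $r_j^e$ and $r_{j-1}^e$ for $j \ge 2$, require that edge groups be slow (not merely slow above dimension~$1$), because the shifted term $r_{j-1}^e$ at $j=2$ calls for control of the $1$-dimensional volume $r_1^e$, and slowness is exactly the hypothesis that gives $\lim r_1^e / [\cdots] = 0$. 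Assembling these, $\lim_n r_j(B_n)/[G:B_n] = 0$ for all $j \ge 2$, which is the definition of $G$ being slow above dimension~$1$.

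The main obstacle I anticipate is the careful identification of the induced filtration on each vertex and edge group: for a fixed $g \in G$ the subgroups $G_v \cap B_n^{g}$ need not literally be a nested chain, and I must verify they can be organized into genuine exhausting normal chains (after replacing $B_n$ by a cofinal subsequence and tracking conjugacy representatives) so that the slowness hypotheses apply uniformly, and that the counting of lifts is compatible with the denominators $[G:B_n]$ in each term. A secondary technical point is ensuring the glued complex is genuinely aspherical with the claimed cell count; this is standard for graphs of groups (the total space fibers appropriately over $X_n$ and the pieces are aspherical), but I would cite the mapping-cylinder construction of a $K(\pi,1)$ for a graph of groups rather than reprove asphericity. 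Once the indexing is pinned down, the limit computation is routine linearity.
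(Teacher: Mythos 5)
The paper does not actually prove this proposition---it imports it verbatim from \cite[Proposition 4.5]{Desi2}---but your argument is essentially the proof given there: pass to the induced graph-of-groups decomposition of each $B_n$, assemble a classifying space for $B_n$ from efficient classifying spaces of the vertex and edge pieces via mapping cylinders, and note that normality of $B_n$ makes $(B_n \cap G_v)_n$ and $(B_n \cap G_e)_n$ genuine exhausting normal chains whose slowness estimates, after the index bookkeeping $[G:B_n]=[G:B_nG_v]\,[G_v:B_n\cap G_v]$ matching the number of lifts against the denominator, force every vertex term and every (shifted) edge term to vanish in the limit for $j\geq 2$. Your flagged worry about the subgroups $G_v\cap B_n^{g}$ failing to be nested evaporates precisely because the $B_n$ are normal (so $B_n^{g}=B_n$ and all lifts over $v$ are conjugate, hence share a volume vector), and the rest of the argument is correct as outlined.
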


\begin{definition}
Let $K$ be a field and let $G$ be a residually finite group. $G$ is \emph{$K$-slow above dimension 1} if for every exhausting normal chain $(B_n)$, we have
\[ \lim_{n\to \infty} \frac{\dim_{K}H_{j}(B_{n},K)}{[G \colon B_{n}]}=0,\] for all $j\geq 2$.

$G$ is \emph{$K$-slow} if it satisfies the additional requirement that the limit exists and is zero for $j=1$ as well.
\end{definition}

It follows directly by the definitions that if a group $G$ is slow above dimension 1 (respectively, slow), then it is $K$-slow above dimension 1 (respectively, $K$-slow).

\begin{proposition}\label{Proposition 2}\cite[Proposition 5.3]{Desi2}
Let $K$ be a field. If a residually finite group $G$ is the fundamental group of a finite graph of groups where all of the edge groups are $K$-slow and all of the vertex groups are $K$-slow above dimension $1$, then $G$ is $K$-slow above dimension $1$.
\end{proposition}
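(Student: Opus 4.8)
The plan is to reduce the homology of each finite-index subgroup $B_n$ to the homologies of its vertex and edge stabilizers via the Mayer--Vietoris sequence attached to the graph-of-groups decomposition, and then to recognize the chains these stabilizers inherit as exhausting normal chains to which the hypotheses apply.

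First I would fix the Bass--Serre tree $T$ on which $G$ acts, with finite quotient graph $T/G$, vertex stabilizers conjugate to the $G_v$ and edge stabilizers conjugate to the $G_e$. The contractibility of $T$ yields a short exact sequence of $\mathbb{Z}G$-modules
\[ 0 \to \bigoplus_{e} \mathbb{Z}[G/G_e] \to \bigoplus_{v} \mathbb{Z}[G/G_v] \to \mathbb{Z} \to 0, \]
and hence, by Shapiro's lemma, a Mayer--Vietoris long exact sequence relating $H_*(G_v, K)$, $H_*(G_e, K)$ and $H_*(G, K)$. The same applies verbatim to every subgroup, in particular to each $B_n$, which acts on $T$ with finite quotient graph $T/B_n$ and stabilizers of the form $B_n \cap G_v^{g}$ and $B_n \cap G_e^{g}$. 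From exactness of
\[ \bigoplus_{v' \in V(T/B_n)} H_j(\mathrm{stab}\, v', K) \to H_j(B_n, K) \to \bigoplus_{e' \in E(T/B_n)} H_{j-1}(\mathrm{stab}\, e', K) \]
I obtain the key inequality
\[ \dim_K H_j(B_n, K) \leq \sum_{v' \in V(T/B_n)} \dim_K H_j(\mathrm{stab}\, v', K) + \sum_{e' \in E(T/B_n)} \dim_K H_{j-1}(\mathrm{stab}\, e', K). \]

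Next I would reorganize each sum by $G$-orbits using the normality of $B_n$. The $B_n$-orbits of vertices lying over a fixed $G$-orbit of $v$ are indexed by the double cosets $B_n \backslash G / G_v$, of which there are $[G : B_n G_v]$, and since $B_n \trianglelefteq G$ every stabilizer satisfies $B_n \cap G_v^{g} = (B_n \cap G_v)^{g}$, hence is conjugate to $B_n \cap G_v$ and has the same $K$-homology. Using $[G : B_n G_v]/[G : B_n] = 1/[G_v : G_v \cap B_n]$, and likewise for edges, dividing the displayed inequality by $[G:B_n]$ gives
\[ \frac{\dim_K H_j(B_n, K)}{[G : B_n]} \leq \sum_{v} \frac{\dim_K H_j(G_v \cap B_n, K)}{[G_v : G_v \cap B_n]} + \sum_{e} \frac{\dim_K H_{j-1}(G_e \cap B_n, K)}{[G_e : G_e \cap B_n]}, \]
where now $v,e$ range over the finitely many vertices and edges of the finite quotient graph $T/G$. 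The point is that $(G_v \cap B_n)_n$ is an exhausting normal chain in $G_v$: it consists of normal subgroups of finite index since $B_n \trianglelefteq G$ is, it is decreasing, and $\bigcap_n (G_v \cap B_n) = G_v \cap \bigcap_n B_n = 1$; similarly $(G_e \cap B_n)_n$ is one in $G_e$.

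Finally I would invoke the hypotheses on these chains. Since $G_v$ is $K$-slow above dimension $1$, the vertex summand tends to $0$ for every $j \geq 2$; since $G_e$ is $K$-slow \emph{in all dimensions}, the edge summand, which involves $H_{j-1}$, tends to $0$ as soon as $j - 1 \geq 1$, i.e. for every $j \geq 2$. Because there are only finitely many vertices and edges, I may pass the limit through the finite sum to conclude $\limsup_n \dim_K H_j(B_n,K)/[G:B_n] = 0$, and non-negativity forces the limit to be $0$ for all $j \geq 2$; as $(B_n)$ was an arbitrary exhausting normal chain, $G$ is $K$-slow above dimension $1$. The main thing to get right is the bookkeeping of the $B_n$-orbits together with the index identity coming from normality; the one conceptual subtlety is that the degree shift in the Mayer--Vietoris sequence is exactly what forces the edge groups to be assumed fully $K$-slow rather than merely $K$-slow above dimension $1$.
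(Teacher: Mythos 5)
Your proof is correct: the Mayer--Vietoris sequence for the Bass--Serre tree, the double-coset count $[G:B_nG_v]$ of $B_n$-orbits together with the identity $[G:B_nG_v]/[G:B_n]=1/[G_v:G_v\cap B_n]$, the observation that $(G_v\cap B_n)_n$ and $(G_e\cap B_n)_n$ are exhausting normal chains, and the degree shift explaining why edge groups must be fully $K$-slow are all handled properly. The paper itself states this result as a citation of \cite[Proposition 5.3]{Desi2} without reproducing a proof, and your argument is essentially the standard one given there.
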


\subsection{Limit groups over coherent RAAGs are slow above dimension $1$}
\label{coherent}

 \begin{lemma} \cite[Corollary 9.7]{Montse} Limit groups over coherent RAAGs are $CAT(0)$.   \end{lemma}

In particular, since limit groups over coherent RAAGs are torsion-free, they are of type $F$.

\begin{lemma} \label{coherentRAAGs} Coherent RAAGs are slow above dimension $1$. In particular, Droms RAAGs are  slow above dimension $1$.
\end{lemma}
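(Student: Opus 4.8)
The plan is to combine Droms' graph-theoretic characterization of coherent RAAGs with the graph-of-groups criterion of Proposition~\ref{Proposition 1}. First I would recall that a RAAG $GX$ is coherent if and only if its defining graph $X$ is triangulated, i.e. contains no induced $C_n$ with $n\geq 4$ (see \cite{Droms2}); this is exactly the \emph{triangulated} condition defined above. For an induced subgraph $Y$ of $X$, write $G_Y$ for the sub-RAAG generated by $V(Y)$. The argument then proceeds by induction on the number of vertices $|V(X)|$.

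For the base case, when $X$ has at most one vertex, $GX$ is either trivial or $\mathbb{Z}$, and both are slow by Example~\ref{Example Desi}, hence in particular slow above dimension $1$. For the inductive step the key structural input is that every triangulated graph admits a \emph{simplicial vertex} $v$, that is, a vertex whose neighbourhood $N(v)$ spans a complete subgraph (Dirac's theorem). Setting $\mathrm{st}(v) = \{v\} \cup N(v)$, which is again complete since $v$ commutes with every vertex of the clique $N(v)$, one has the standard splitting
\[ GX \cong G_{X \setminus v} \ast_{G_{N(v)}} G_{\mathrm{st}(v)}, \]
exhibiting $GX$ as the fundamental group of a finite graph of groups with vertex groups $G_{X\setminus v}$ and $G_{\mathrm{st}(v)}$ and a single edge group $G_{N(v)}$.

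Now $G_{\mathrm{st}(v)} \cong \mathbb{Z}^{|N(v)|+1}$ and $G_{N(v)} \cong \mathbb{Z}^{|N(v)|}$ are finitely generated free abelian, hence slow by Example~\ref{Example Desi}; in particular the edge group $G_{N(v)}$ is slow and the abelian vertex group $G_{\mathrm{st}(v)}$ is slow above dimension $1$. Since an induced subgraph of a triangulated graph is again triangulated, $X \setminus v$ is triangulated and $G_{X\setminus v}$ is slow above dimension $1$ by the inductive hypothesis. As RAAGs are residually finite, Proposition~\ref{Proposition 1} applies and gives that $GX$ is slow above dimension $1$. Finally, the defining graph of any Droms RAAG is triangulated, so Droms RAAGs are coherent and the last assertion is immediate.

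The main obstacle is purely structural, namely verifying the simplicial-vertex amalgamation $GX \cong G_{X \setminus v} \ast_{G_{N(v)}} G_{\mathrm{st}(v)}$ and the existence of such a vertex in a triangulated graph; both are classical facts about chordal graphs and RAAG splittings, and once they are in place the induction runs routinely through Proposition~\ref{Proposition 1}, with the free abelian edge and vertex groups supplying slowness from Example~\ref{Example Desi}.
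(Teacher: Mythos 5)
Your proof is correct and follows essentially the same route as the paper: the paper simply cites Droms's theorem that a coherent RAAG splits as a finite graph of groups with free abelian vertex groups and then applies Proposition~\ref{Proposition 1}, whereas you reprove that decomposition by induction using simplicial vertices of the chordal defining graph before invoking the same proposition. The only quibble is bibliographic: the chordality characterization of coherence and the clique-amalgam splitting come from \cite{Droms}, not \cite{Droms2} (which concerns the Droms RAAG characterization).
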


\begin{proof} 
In \cite{Droms} Droms proved that if $GX$ is a coherent RAAG, then $GX$ splits as a finite graph of groups where all the vertex groups are free abelian. Thus, by Proposition \ref{Proposition 1}, coherents RAAGs are  slow above dimension $1$. 
\end{proof}

The goal of this section is to show that limit groups over coherent RAAGs are slow above dimension $1$. For that, we will use the work on \cite{Montse} and \cite{Montse2} about limit groups over coherent RAAGs.

Let us recall some definitions that are needed to understand the results below. If $GX$ is a RAAG, the elements of $X$ are called the \emph{canonical generators} of $GX$. A \emph{non-exceptional surface} is a surface which is not a non-orientable surface of genus $1$, $2$ or $3$.  We can use the following proposition to define graph towers over coherent RAAGs.

\begin{proposition}\cite[Lemma 7.3]{Montse}\label{Proposition graph tower}
Let $G$ be a coherent right-angled Artin group. A graph tower over $G$ of height $0$ is a coherent RAAG $H$ which is obtained from $G$ by extending centralizers of canonical generators of $G$.

A graph tower over $G$ of height $\geq 1$ can be obtained as a free product with amalgamation, where the edge group is a free abelian group, one of the vertex groups is a graph tower over $G$ of lower height and the other vertex group is either free abelian, or the direct product of a free abelian group and the fundamental group of a non-exceptional surface.
\end{proposition}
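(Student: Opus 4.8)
The statement is in essence an unwinding of the recursive definition of a graph tower over a coherent RAAG, so the plan is to argue by induction on the height $h$ of the tower, running in parallel with the analysis of limit groups over free products in \cite{Montse3} and the coherence results of \cite{Montse, Montse2}. At each height the tower is assembled from the next lower tower by attaching a single ``block'', and the whole content of the proposition is to pin down the algebraic shape of that block: at height $0$ the block is a centraliser extension, and at positive height it is the new vertex group of a one-edge splitting. First I would isolate these two claims and treat them separately.

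For the base case $h=0$ the plan is to start from the fact that in any RAAG $GX$ the centraliser of a canonical generator $x$ is the special (parabolic) subgroup $\langle x\rangle \times G_{\mathrm{lk}(x)}$ supported on the closed star of $x$. Extending this centraliser, i.e.\ forming $GX \ast_{C} (C \times \mathbb{Z}^k)$ with $C = C_{GX}(x)$, has the effect of enlarging the abelian factor $\langle x\rangle$ to $\mathbb{Z}^{k+1}$; concretely it replaces the vertex $x$ by $k+1$ mutually commuting vertices, each joined to $\mathrm{lk}(x)$. The resulting group is therefore again a RAAG, and I would check that it stays coherent by exhibiting it as a finite graph of groups with free abelian vertex groups, which is exactly Droms' characterisation of coherent RAAGs used in the proof of Lemma~\ref{coherentRAAGs}. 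This establishes the height-$0$ description.

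For the inductive step $h \geq 1$, the idea is to read the required splitting off the generalised abelian (JSJ) decomposition of the corresponding limit group over a free product supplied by \cite{Montse3}: the underlying tree has abelian edge groups and three kinds of vertices, namely rigid, abelian, and quadratically hanging (surface) vertices. Over a coherent RAAG every abelian subgroup is finitely generated and torsion-free, hence free abelian, which forces all edge groups to be free abelian. I would then collapse the tree so that the rigid part together with the lower levels of the tower is concentrated in one vertex; this presents the tower as a one-edge amalgam $A \ast_{\mathbb{Z}^{r}} B$, where $A$ is a graph tower of height $h-1$ and $B$ is the peeled-off block, either a free abelian group (an abelian vertex) or the direct product of a free abelian group with a surface group (a quadratically hanging vertex), as claimed.

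The main obstacle I expect is not the bookkeeping of the amalgam but controlling the geometry of the non-rigid vertices: showing that the edge and abelian-vertex groups are genuinely free abelian and, above all, that the quadratically hanging surfaces are \emph{non-exceptional}. This is where the deep input from \cite{Montse, Montse2, Montse3} is unavoidable: non-orientable surfaces of genus $1$, $2$, $3$ must be excluded because the associated quadratic equations do not produce coherent-RAAG limit quotients, failing the relevant fully-residual and Euler-characteristic conditions, exactly as in the treatments of Sela and Kharlampovich--Miasnikov over free groups. Verifying these exclusions, rather than assembling the graph-of-groups decomposition, is the crux of the argument.
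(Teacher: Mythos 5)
The paper gives no proof of this statement: it is quoted directly from the literature as \cite[Lemma 7.3]{Montse}, so there is no internal argument to compare yours against. Judged on its own terms, your sketch of the height-$0$ case is sound: the centraliser of a canonical generator $x$ is the special subgroup on $\mathrm{st}(x)$, extending it amounts to duplicating $x$ into a clique of $k+1$ mutually commuting vertices each joined to $\mathrm{lk}(x)$, and duplicating a vertex preserves chordality, hence coherence by Droms' criterion. That part matches what one expects the source to do.

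The inductive step, however, inverts the logic of the construction. In \cite{Montse} and \cite{Montse2} a graph tower is \emph{defined} by a recursive procedure that attaches an abelian flat or a surface flat to a lower tower along a free abelian subgroup; the proposition you are asked about is essentially a restatement of that definition, and the substantive theorem (quoted in this paper as Theorem \ref{Proposition graph tower 2}) is that every limit group over $G$ embeds in such a tower. Your plan derives the splitting from a generalised abelian (JSJ) decomposition of ``the corresponding limit group over a free product'', but a graph tower is not a limit group whose JSJ decomposition you already control --- it is the ambient object into which limit groups are embedded, and its one-edge splitting comes from the construction itself, not from a decomposition theorem applied after the fact. Collapsing a JSJ tree of a limit group would in general give vertex groups that are limit groups of lower height, not graph towers of lower height, and would not by itself produce the specific block shapes (free abelian, or free abelian times a non-exceptional surface group), nor the exclusion of the exceptional surfaces, which in the source is built into the definition of a surface flat rather than deduced from residual or Euler-characteristic conditions. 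So the crux you identify at the end is real, but the route you propose to reach it does not go through.
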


\begin{theorem}\cite[Theorem 8.1]{Montse2}\label{Proposition graph tower 2}
Let $\Gamma$ be a limit group over a RAAG $G$. Then, $\Gamma$ is a subgroup of a graph tower over $G$.
\end{theorem}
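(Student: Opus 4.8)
The plan is to prove this by induction on a complexity invariant of $\Gamma$, building the graph tower one level at a time from an analysis of a canonical action of $\Gamma$ on an $\R$-tree. Since $\Gamma$ is finitely generated and fully residually $G$, there is a sequence of homomorphisms $\varphi_n \colon \Gamma \to G$ that are injective on larger and larger finite subsets. I would feed the rescaled actions of $\Gamma$ on $G$ (via its word metric, equivalently its action on the associated cube complex) into the Bestvina--Paulin construction to obtain a nontrivial isometric action of $\Gamma$ on an $\R$-tree $T$. Because centralisers in a RAAG are controlled --- each is the product of a free abelian group and a parabolic sub-RAAG --- the limiting action is stable with abelian arc stabilisers, and after quotienting by the kernel of the action one may assume it is faithful. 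The stabilisers of this limit action are exactly what will produce the free abelian edge groups of the tower of Proposition~\ref{Proposition graph tower}.

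The heart of the argument is the structural analysis of this action via the Rips machine, carried out in the free-product / graph-product setting of \cite{Montse3}. Decomposing $T$ into its simplicial part and its minimal axial, surface and exotic components, the Rips machine rules out exotic components for stable actions with abelian arc stabilisers and exhibits $\Gamma$ as the fundamental group of a finite graph of groups: every edge group is free abelian, the axial components contribute free abelian vertex groups, the surface components contribute non-exceptional surface groups, and the remaining rigid vertex groups fix points of $T$. This is precisely the shape of a graph tower of height $\geq 1$ described in Proposition~\ref{Proposition graph tower}, provided the rigid vertex groups can be absorbed into lower levels.

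To close the induction I would use the complexity of limit groups over free products --- essentially the height of a maximal resolution in the Makanin--Razborov diagram of $\Gamma$ --- together with the shortening argument. Replacing each $\varphi_n$ by a shortest representative of its orbit under the modular automorphisms associated with the abelian splitting forces the rigid vertex groups to be limit groups over $G$ of strictly smaller complexity. By the inductive hypothesis each such group embeds into a graph tower of strictly lower height; re-amalgamating these pieces with the surface and free abelian vertices along the free abelian edge groups then realises $\Gamma$ as a subgroup of a graph tower of one higher height. The base case is minimal complexity, where $\Gamma$ is obtained from $G$ by finitely many extensions of centralisers of canonical generators, i.e.\ a height-$0$ graph tower.

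I expect the shortening argument and the attendant complexity bookkeeping to be the main obstacle, and this is exactly where the RAAG setting departs from the classical free-group case of \cite{Desi2}. Two points are delicate. First, one must run the Rips machine over a free product of RAAGs rather than over a free group, so that axial and surface pieces are separated correctly and no exotic components persist. Second, since centralisers in a RAAG need not be cyclic, the modular group driving the shortening argument is generated by transvections and Dehn twists along higher-rank abelian subgroups, and one must verify that shortening strictly decreases the complexity and therefore terminates. Controlling both phenomena --- which is the technical core of \cite{Montse2} and \cite{Montse3} --- is what guarantees that the inductive step goes through and that the edge and vertex groups produced are of exactly the types permitted in Proposition~\ref{Proposition graph tower}.
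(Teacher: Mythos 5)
This statement is quoted in the paper as an external result (\cite[Theorem 8.1]{Montse2}); the paper supplies no proof of its own, so your attempt can only be measured against the strategy of that reference, which your sketch gestures at but does not actually follow at the decisive point.

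The genuine gap is at the very first structural step. When $G$ is a RAAG that is not free, the Bestvina--Paulin limit of the rescaled actions coming from the $\varphi_n$ (acting on $G$ with its word metric, i.e.\ on the universal cover of the Salvetti complex) is \emph{not} an $\mathbb{R}$-tree: any $\mathbb{Z}^2$ in $G$ -- and every coherent RAAG of level $\geq 1$ contains one -- contributes flats that survive rescaling, so the asymptotic cone is a higher-dimensional median space rather than a tree. Consequently the Rips machine for stable actions on $\mathbb{R}$-trees, and with it the decomposition into simplicial, axial, surface and exotic components on which your entire argument rests, does not apply. This is precisely the obstruction that \cite{Montse2} exists to overcome: the authors introduce \emph{real cubings} as the correct limit objects, develop a structure theory for actions on them, and only then extract the graph-tower decomposition; the free-product Rips theory of \cite{Montse3} enters only for the free-product pieces of the hierarchy, not as a substitute for the cubical limit. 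Your concession that one must ``run the Rips machine over a free product of RAAGs rather than over a free group'' does not address this, because the difficulty is not the base of the free product but the fact that the limiting geometry is not one-dimensional. The shortening argument and modular-group bookkeeping you identify as the main obstacle are real issues, but they sit downstream of this more fundamental one, and without the real-cubing machinery the induction you propose never gets started for any $G$ containing a $\mathbb{Z}^2$.
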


\begin{lemma}\label{Lemma CW}\cite{Wall}
Let $1\rightarrow C \rightarrow D \rightarrow E \rightarrow 1$ be a short exact sequence of groups of type $F$. Suppose that there are classifying spaces $K(C,1)$ and $K(E,1)$ with $\alpha_{t}(C)$ and $\alpha_{t}(E)$ $t$-cells, respectively. Then, there is a $K(D,1)$ complex with $\alpha_{i}(D)$ $i$-cells such that
\[ \alpha_{i}(D)= {\mathlarger{\sum}}_{0\leq t \leq i} \alpha_{t}(C)\alpha_{i-t}(E).\]
\end{lemma}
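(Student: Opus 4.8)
The plan is to realize some $K(D,1)$ as the total space of a fibre bundle over $K(E,1)$ with fibre $K(C,1)$, and then to read off the cell count from the product cell structure that such a bundle carries over the cells of the base. Throughout I fix the given models: $K(C,1)$ with $\alpha_t(C)$ cells in each dimension $t$, and $K(E,1)$ with $\alpha_k(E)$ cells in each dimension $k$. Write $B := K(E,1)$.

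First I would set up the fibration. The extension $1 \to C \to D \to E \to 1$ determines an action of $E = \pi_1(B)$ on the aspherical space $K(C,1)$ (induced by conjugation in $D$), and using this as monodromy one builds a bundle $p \colon X \to B$ with fibre $K(C,1)$ whose associated extension of fundamental groups is the given one. The long exact sequence of the fibration reads $\pi_{n+1}(B) \to \pi_n(K(C,1)) \to \pi_n(X) \to \pi_n(B)$; since both base and fibre are aspherical, this forces $\pi_n(X)=0$ for $n \geq 2$ and yields a short exact sequence $1 \to C \to \pi_1(X) \to E \to 1$ agreeing with the original extension. Hence $X$ is a $K(D,1)$.

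Next I would equip $X$ with a CW structure by building it over the skeleta $B^{(k)}$ of $B$. Setting $X_k := p^{-1}(B^{(k)})$, the passage from $X_{k-1}$ to $X_k$ amounts to attaching, for each $k$-cell $e$ of $B$, the restriction of the bundle over the closed cell $\bar e \cong D^k$; as $D^k$ is contractible, this restriction is isomorphic to the trivial bundle $D^k \times K(C,1)$, glued to $X_{k-1}$ along $\partial D^k \times K(C,1)$. The relative CW structure of the pair $(D^k \times K(C,1),\, \partial D^k \times K(C,1))$ has exactly one cell for each product of the top $k$-cell of $D^k$ with a cell of $K(C,1)$, so each $t$-cell of the fibre contributes precisely one $(k+t)$-cell of $X$. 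These attachments are compatible because the chosen models of fibre and base are fixed throughout, and the resulting $X$ is a CW model for $K(D,1)$. Summing the contributions, an $i$-cell of $X$ arises from an $(i-t)$-cell of $B$ together with a $t$-cell of the fibre for some $0 \leq t \leq i$, giving $\alpha_i(D) = \sum_{0 \le t \le i} \alpha_t(C)\,\alpha_{i-t}(E)$, as asserted. (As a check, the split case $D = C \times E$ gives the trivial bundle, and the formula reduces to the cell count of the product complex $K(C,1) \times K(E,1)$.)

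The hard part will be the first step: producing an honest fibre bundle (or at least a fibration with a CW total space) that realizes the given extension, so that over each contractible cell of the base it is trivialized in a globally compatible way, making the product cell structure well defined. The only genuine subtlety is that $E$ acts on $K(C,1)$ a priori only up to homotopy, so one must rigidify this action (or argue inductively over the skeleta of $B$, using that any fibration over a contractible cell is trivial) before the bookkeeping of the later steps goes through. This rigidification is exactly the content of Wall's construction; once it is in place, the cell-counting is routine.
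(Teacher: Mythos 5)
The paper does not prove this lemma; it cites Wall, whose argument is algebraic: from free resolutions of $\mathbb{Z}$ over $\mathbb{Z}C$ and $\mathbb{Z}E$ with $\alpha_t(C)$, resp.\ $\alpha_t(E)$, generators in degree $t$, he builds a free resolution over $\mathbb{Z}D$ whose $i$-th module is $\bigoplus_{t}F_t(C)\otimes F_{i-t}(E)$, the point being the construction of a perturbed (``twisted tensor product'') differential; the CW statement is the topological counterpart, obtained by the same skeletal induction on $K(E,1)$. Your bundle-theoretic framing is morally the same picture, and your cell count in the split case is the right sanity check, but as written the proof has a genuine gap exactly where you flag it, and flagging it does not close it. The extension only gives a homomorphism $E\to\Out(C)$, so $E$ acts on $K(C,1)$ only up to (non-canonical, non-coherent) homotopy; there is in general no fibre bundle with discrete structure group realizing the extension, and the sentence ``these attachments are compatible because the chosen models of fibre and base are fixed throughout'' asserts precisely the coherence that has to be proved. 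Since the rigidification you defer to Wall \emph{is} the content of the lemma, what remains of your argument is the routine bookkeeping, not a proof.

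Two further points would need attention even if a genuine bundle (or fibration) were in hand. First, the trivializations over adjacent closed cells differ by self-maps of $K(C,1)$ that need not be cellular, so the product cells over a $k$-cell of the base are attached by maps that do not a priori land in the $(k+t-1)$-skeleton; one must invoke cellular approximation and replace attaching maps up to homotopy, which is harmless but must be said. Second, the clean way to avoid both issues is to run the induction directly: over the $0$-skeleton of $K(E,1)$ place $K(C,1)$; over each $1$-cell glue a mapping cylinder of a chosen homotopy equivalence of $K(C,1)$ realizing the outer action; over higher cells attach $\bar e^k\times f^t$ using chosen nullhomotopies whose existence is guaranteed by asphericity of the fibre. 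That inductive construction (or, equivalently, Wall's algebraic one translated through the type-$F$ hypothesis) is what actually produces the complex with $\sum_{0\le t\le i}\alpha_t(C)\alpha_{i-t}(E)$ cells in dimension $i$.
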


We now prove some results that will be used in order to show that limit groups over coherent RAAGs are slow above dimension $1$.

\begin{lemma} \label{obvious}  Suppose that $G$ is a group of homotopical type $F$, $H$ is a normal subgroup of finite index in $G$ and $(r_{j}(G))_{j}$ is a volume vector for $G$. Then,  $([G \colon H]r_{j}(G))_{j}$ is a volume vector for $H$.
\end{lemma}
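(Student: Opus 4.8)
The plan is to realize the volume vector by an explicit classifying space and then pass to the covering space determined by $H$. By the definition of a volume vector, there is a classifying space $Y = K(G,1)$ having exactly $r_j(G)$ cells in each dimension $j$; since $G$ is of homotopical type $F$, we may take $Y$ to be a finite CW-complex, so that $(r_j(G))_j$ has only finitely many non-zero entries. The complex $Y$ is aspherical, i.e.\ its universal cover $\widetilde{Y}$ is contractible and $\pi_1(Y) \cong G$. This is the object on which all the cell-counting will be done.

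Next I would use the finite-index subgroup $H$ to construct a cover. Writing $d \coloneqq [G \colon H]$ and identifying $G$ with $\pi_1(Y)$, covering space theory yields a $d$-sheeted covering $p \colon Y_H \mapsto Y$ with $p_*(\pi_1(Y_H)) = H$, so that $\pi_1(Y_H) \cong H$. Since any covering of an aspherical complex is again aspherical — the universal cover of $Y_H$ is again $\widetilde{Y}$, hence contractible — the space $Y_H$ is a $K(H,1)$. Note that normality of $H$ plays no essential role here; an arbitrary subgroup of finite index would work just as well.

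Finally I would count cells. Pulling back the CW-structure of $Y$ along $p$ endows $Y_H$ with a CW-structure in which every open $j$-cell of $Y$ is evenly covered and therefore lifts to exactly $d = [G \colon H]$ open $j$-cells of $Y_H$. Hence the number of $j$-cells of $Y_H$ equals $[G \colon H]\, r_j(G)$ for every $j$, which shows that $([G \colon H]\, r_j(G))_j$ is a volume vector for $H$. I do not expect a genuine obstacle: the only two points that deserve explicit mention are that asphericity passes to covers (so that $Y_H$ really is a classifying space for $H$) and that a finite-sheeted covering multiplies the cell count in each dimension by the degree.
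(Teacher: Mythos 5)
Your proof is correct and is essentially the same as the paper's: the cover $Y_H$ you construct is precisely $\widetilde{Y}/H$, the quotient of the universal cover by $H$, which is how the paper phrases it. Your side remark that normality of $H$ is not needed is also accurate.
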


\begin{proof}
Let $Y$ be a classifying space $K(G,1)$ that, for all $j\in \mathbb{N} \cup \{ 0 \} $, has exactly $r_{j}(G)$  $j$-cells. Then, if we denote by $\widetilde{Y}$ the universal cover of $Y$, $\widetilde{Y}$ is contractible and $Y= \widetilde{Y} \slash G$. Therefore, $\widetilde{Y} \slash H$ is a classifying space for $H$ with exactly $[G \colon H]r_{j}(G)$ $j$-cells.
 \end{proof}

\begin{lemma} \label{slow-123} Let $G$ be a group of homotopical type $F$ and $H$ a  residually finite group where there is a short exact sequence
\[
1 \to \mathbb{Z}^n \to H \to G \to 1 .
\]
a) If $n \geq 1$, then $H$ is slow.\\[5pt]
b) If $n \geq 0$ and $G$ is slow above dimension $1$, then $H$ is slow above dimension $1$.
\end{lemma}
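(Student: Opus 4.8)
The plan is to verify the volume-growth condition directly from the two structural inputs already available: Wall's cell-counting formula for a short exact sequence (Lemma~\ref{Lemma CW}) and the finite-index covering estimate (Lemma~\ref{obvious}). Throughout I fix an arbitrary exhausting normal chain $(B_k)_{k\ge 1}$ of $H$ (so I am tacitly assuming, as the notion of slowness requires, that $H$ is residually finite). Writing $A=\mathbb Z^n$ and $p\colon H\to G$ for the quotient map, I set $A_k:=A\cap B_k$ and $\bar B_k:=p(B_k)$. First I would record the elementary bookkeeping: $A_k$ is a finite-index subgroup of $A$, hence $A_k\cong\mathbb Z^n$; the subgroup $\bar B_k$ is normal of finite index in $G$; restricting and corestricting the given extension yields a short exact sequence $1\to A_k\to B_k\to\bar B_k\to1$ of groups of type $F$ (all three are of type $F$, since $H$ is an extension of the type-$F$ groups $\mathbb Z^n$ and $G$); and the indices multiply, $[H:B_k]=[A:A_k]\,[G:\bar B_k]$.

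The heart of the argument is to produce, for each $k$, an economical volume vector for $B_k$. I would fix once and for all a finite classifying space for $G$, giving a volume vector $(r_j(G))_j$ with finite support. By Lemma~\ref{obvious}, $\bar B_k$ then admits the volume vector $\big([G:\bar B_k]\,r_j(G)\big)_j$. Combining this with the standard torus structure on $K(A_k,1)$, which has exactly $\binom{n}{t}$ cells in dimension $t$, and feeding both into Lemma~\ref{Lemma CW}, I obtain a volume vector $(\alpha_i(B_k))_i$ for $B_k$ with finitely many non-zero entries and
\[\alpha_i(B_k)=[G:\bar B_k]\sum_{t=0}^{i}\binom{n}{t}\,r_{i-t}(G).\]
Dividing by $[H:B_k]=[A:A_k]\,[G:\bar B_k]$, the factor $[G:\bar B_k]$ cancels, leaving
\[\frac{\alpha_i(B_k)}{[H:B_k]}=\frac{1}{[A:A_k]}\sum_{t=0}^{i}\binom{n}{t}\,r_{i-t}(G),\]
whose numerator is a constant $c_i$ independent of $k$.

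For part (a) it then suffices to show $[A:A_k]\to\infty$ when $n\ge1$. This I would deduce from $\bigcap_k A_k=A\cap\bigcap_k B_k=1$: since $\mathbb Z^n$ has only finitely many subgroups of any given bounded index, a bounded sequence $[A:A_k]$ would force the descending chain $(A_k)$ to stabilise at a non-trivial subgroup, contradicting triviality of the intersection. Hence $c_i/[A:A_k]\to0$ for every $i$, and in particular the limit is zero in all dimensions $j\ge1$, so $H$ is slow. Part (b) is then formal: for $n\ge1$ it follows from (a), since slow implies slow above dimension $1$; for $n=0$ the map $p$ is an isomorphism, so $H\cong G$ inherits slowness above dimension $1$ from the hypothesis on $G$.

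The one genuinely delicate point---and the step I expect to be the main obstacle---is that the pushed-down chain $(\bar B_k)$ in $G$ need not be exhausting: $\bigcap_k\bar B_k$ can be non-trivial, so one cannot simply invoke slowness of $G$ along this chain. The resolution is precisely to avoid slowness of $G$ altogether, replacing it by a single fixed finite classifying space pulled back through Lemma~\ref{obvious}; this yields volume vectors for $\bar B_k$ growing only linearly in $[G:\bar B_k]$, after which the divergence $[A:A_k]\to\infty$ supplies all the decay. This is also why (a) needs only that $G$ is of type $F$, and why the slowness hypothesis on $G$ in (b) is used solely in the degenerate case $n=0$.
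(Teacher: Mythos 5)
Your proof is correct and follows essentially the same route as the paper: restrict the extension to each $B_k$, feed Lemma~\ref{obvious} and Lemma~\ref{Lemma CW} into the index factorisation $[H:B_k]=[A:A\cap B_k][G:p(B_k)]$, cancel, and let $[A:A\cap B_k]\to\infty$ drive the limit to zero, with the $n=0$ case of (b) handled separately. The only difference is that you spell out why $[A:A\cap B_k]\to\infty$ (finitely many subgroups of $\mathbb{Z}^n$ of bounded index plus trivial intersection), a point the paper's proof leaves implicit.
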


\begin{proof}
Let us denote $\mathbb{Z}^n$ by $A$.

a) Let $(B_i)$ be a chain of finite index normal subgroups in $H$ with $\bigcap B_i=1$. We need to show that for each $i$ there is a $K(B_i,1)$ complex with $r_j(B_i)$ $j$-cells such that for each $j\geq 0$,
\[ \lim_{i\to \infty} \frac{ r_{j}(B_{i})}{[ H  \colon B_{i}]} =0.\] 
For each $i$, the short exact sequence from the statement induces a short exact sequence
\[ 
1 \to A \cap B_{i} \to B_{i} \to p(B_{i}) \to 1.
\]
Let us show that \[[H \colon B_i]= [A \colon A \cap B_i][G \colon p(B_i)].\] Indeed, note that $[H \colon B_i]= [H \colon AB_i][AB_i \colon B_i]$. Firstly, $[AB_i \colon B_i]$ equals $[A \colon A \cap B_i]$. Secondly, $[H \colon A B_i]$ equals $[H \slash A \colon AB_i \slash A]$, and $H \slash A \simeq G $ and $ AB_i \slash A \simeq p(B_i).$
Therefore, $[H \colon AB_i] =[G \colon p(B_i)].$

Let  $\alpha_{j}(G)$ be the number of $j$-cells in a fixed  $K(G,1)$ complex. By  Lemma \ref{obvious}, there is a  $K(p(B_{i}),1)$ complex with  $\alpha_{j}(p(B_{i}))$  $j$-cells such that 
\[\alpha_{j}(p(B_{i})) = [ G \colon p(B_{i})]\alpha_{j}(G).\]
Since $A\cap B_{i}$ has finite index in $A$, there is a $K(A\cap B_{i},1)$ complex with ${n \choose j}$ $j$-cells.
By Lemma \ref{Lemma CW}, there is a $K(B_{i},1)$ complex with $\alpha_{j}(B_{i})$ $j$-cells such that
\[ \alpha_{j}(B_{i})=  {\mathlarger{\sum}}_{0\leq a \leq j} {n \choose {j-a}} \alpha_{a}(p(B_i))= {\mathlarger{\sum}}_{0\leq a \leq j} {n \choose {j-a}} [G \colon p(B_i)] \alpha_{a}(G), \]
and we set $r_j(B_i) = \alpha_j(B_i)$. Then,
 \[ \lim_{i\to \infty} \frac{ r_{j}(B_{i})}{[ H  \colon B_{i}]} = \lim_{i\to \infty} \frac{ [ G \colon p(B_{i})]{\mathlarger{\sum}}_{0\leq a \leq j} {n \choose j-a}\alpha_{a}(G)}
 {[G \colon p(B_{i})][A \colon B_{i} \cap A]} =\]\[{\mathlarger{\sum}}_{0\leq a \leq j} {n \choose j-a}\alpha_{a}(G) \lim_{i\to \infty} \frac{1}{[A \colon B_{i} \cap A]} = 0.
 \]
b) If $n\geq 1$, then we apply a). If $n=0$, by assumption $G$ is slow above dimension $1$.

\end{proof}

\begin{theorem} \label{slow-coherent-1} Limit groups over coherent RAAGs are slow above dimension $1$.
\end{theorem}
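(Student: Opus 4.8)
The plan is to reduce the statement to a claim about graph towers and then induct on the height of the tower, using the structural description in Proposition~\ref{Proposition graph tower} together with the permanence properties in Proposition~\ref{Proposition 1}, Lemma~\ref{slow-123} and Example~\ref{Example Desi}. By Theorem~\ref{Proposition graph tower 2}, a limit group $\Gamma$ over a coherent RAAG $G$ embeds as a finitely generated subgroup of a graph tower $H$ over $G$. Since being slow above dimension $1$ is not automatically inherited by subgroups, I would prove the stronger statement that \emph{every finitely generated subgroup of a graph tower over a coherent RAAG is slow above dimension $1$}, by induction on the height of the tower; the theorem is then the special case $\Gamma \leq H$. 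Throughout, all the groups in sight are residually finite (they are residually coherent-RAAG, and coherent RAAGs are residually finite) and of type $F$, so Proposition~\ref{Proposition 1} applies to each graph-of-groups decomposition that arises.

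For the base case, a graph tower of height $0$ is a coherent RAAG, hence slow above dimension $1$ by Lemma~\ref{coherentRAAGs}; moreover the proof of that lemma (Droms's splitting of a coherent RAAG as a finite graph of free abelian groups, combined with Proposition~\ref{Proposition 1}) passes to finitely generated subgroups, since the induced Bass--Serre decomposition of such a subgroup again has free abelian vertex and edge groups, which are automatically finitely generated and are slow by Example~\ref{Example Desi}. For the inductive step, a tower of height $\geq 1$ is an amalgam $H = A \ast_{C} B$ with $C \cong \mathbb{Z}^{n}$ free abelian, $A$ a graph tower of smaller height, and $B$ either free abelian or of the form $\mathbb{Z}^{k} \times S$ with $S$ the fundamental group of a non-exceptional surface. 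The edge group $C$ is slow by Example~\ref{Example Desi}; $A$ is slow above dimension $1$ by the inductive hypothesis; and $B$ is slow above dimension $1$ because free abelian groups are slow and, in the surface case, the short exact sequence $1 \to \mathbb{Z}^{k} \to \mathbb{Z}^{k} \times S \to S \to 1$ together with Lemma~\ref{slow-123} upgrades the slowness above dimension $1$ of $S$ (Example~\ref{Example Desi}) to $\mathbb{Z}^{k} \times S$. Thus Proposition~\ref{Proposition 1} shows that $H$ itself is slow above dimension $1$.

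It remains to pass from the tower $H = A \ast_{C} B$ to an arbitrary finitely generated subgroup $\Gamma \leq H$, and this is where I expect the main difficulty. Acting on the Bass--Serre tree of the splitting, $\Gamma$ either fixes a vertex (and then lies in a conjugate of a vertex group, handled directly or by a lower instance of the induction) or has a minimal invariant subtree with finite quotient, since $\Gamma$ is finitely generated; in the latter case $\Gamma$ is the fundamental group of a finite graph of groups whose edge groups are subgroups of conjugates of $C$, hence free abelian and slow, and whose vertex groups are the intersections $\Gamma \cap A^{g}$ and $\Gamma \cap B^{g}$. Each purely free abelian such intersection is finitely generated and slow; and any finitely generated $\Gamma \cap B^{g}$ inside some $\mathbb{Z}^{k} \times S$ is slow above dimension $1$ by projecting to $S$ (where finitely generated subgroups are free or surface groups) and applying Lemma~\ref{slow-123} to the resulting short exact sequence. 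The delicate point is that the non-abelian vertex groups $\Gamma \cap A^{g}$ (and $\Gamma \cap B^{g}$ in the surface case) must be finitely generated in order to invoke the inductive hypothesis: this is the heart of the argument and is exactly the place where the theory of limit groups over coherent RAAGs is needed, the edge groups being abelian (slender) and the structure theory of \cite{Montse, Montse2} guaranteeing that these vertex groups are again finitely generated limit groups of strictly smaller height. Granting this, every vertex group is slow above dimension $1$ by induction, every edge group is slow, and a final application of Proposition~\ref{Proposition 1} shows that $\Gamma$ is slow above dimension $1$, completing the induction and hence the proof.
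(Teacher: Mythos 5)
Your proposal is correct and follows essentially the same route as the paper: reduce to graph towers via Theorem~\ref{Proposition graph tower 2}, induct on the height, pass the Bass--Serre decomposition of the tower down to the finitely generated subgroup, handle the vertex group $\mathbb{Z}^{k}\times S$ via Lemma~\ref{slow-123}, and conclude with Proposition~\ref{Proposition 1}. The finiteness issue you flag for the induced vertex groups is real but is also left implicit in the paper's own proof, which likewise relies on the structure theory of \cite{Montse, Montse2} at that point.
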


\begin{proof}
Let $G$ be a coherent RAAG and let $\Gamma$ be a limit group over $G$. Then, by Theorem \ref{Proposition graph tower 2}, $\Gamma$ is a subgroup of a graph tower over $G$, say $L$.
Let us prove by induction on the height of $L$ that $\Gamma$ is slow above dimension $1$ (see Proposition \ref{Proposition graph tower}).

If $L$ has height $0$, $L$ is a coherent RAAG. Therefore, $L$ is the fundamental group of a graph of groups where the vertex groups are free abelian. Thus, $\Gamma$ also admits a decomposition as a graph of groups where the vertex groups are free abelian, so by Proposition \ref{Proposition 1} we get that $\Gamma$ is slow above dimension $1$.

Now suppose that the height of $L$ is greater than $0$. Then, $L$ is a free product with amalgamation, where the edge group is a free abelian group, one of the vertex groups is a graph tower over $G$ of lower height and the other vertex group is either free abelian, or the direct product of a free abelian group and the fundamental group of a non-exceptional surface. Then, $\Gamma$ admits a decomposition as a graph of groups where the edge groups are free abelian (including the possibility that some are trivial), and the vertex groups are either subgroups of graph towers over $G$ of lower height (and by induction, those vertex groups are slow above dimension $1$) or free abelian groups or subgroups of the direct product of a free abelian group and the fundamental group of a non-exceptional surface. It suffices to show that finitely generated subgroups of the direct product of a free abelian group and the fundamental group of a non-exceptional surface are slow above dimension $1$. Then, by Proposition \ref{Proposition 1} we obtain that $\Gamma$ is slow above dimension $1$. 

If $H$ is a finitely generated subgroup of $\mathbb{Z}^m \times G_0$ where $m\in \mathbb{N} \cup \{0\}$ and $G_0$ is the fundamental group of a non-exceptional surface, then there is a short exact sequence
\[ 
1 \to A \to H \to  N \to 1,
\]
where $A$ is a free abelian group and $N$ is a finitely generated subgroup of $G_0$. In particular, $N$ is either a surface group or a free group, so $N$ is slow above dimension $1$. Then, by Lemma \ref{slow-123}, $H$ is slow above dimension $1$.
\end{proof}

\begin{remark}   Observe that by Theorem \ref{slow-coherent-1} coherent RAAGs  are slow above dimension 1. It is not true that every RAAG is  slow above dimension 1, as this would imply that it is $K$-slow above  dimension 1 for every field $K$. Indeed, by
\cite[Theorem 1]{Modp}, if $G$  is a right-angled Artin group  with underlying flag complex $L$, for any exhausting normal chain $(B_{n})$ in $G$,
\[ \lim_{n\to \infty} \frac{\dim_{K} H_{j}(B_{n}, K)}{[G \colon B_{n}]}= \bar{b}_{j-1}(L, K),\]
where $\bar{b}_{j-1}(L,K)$ denotes the reduced Betti number of $L$ with coefficients in $K$.
\end{remark}

\begin{proof}[Proof of Theorem \ref{Theorem C}]
Parts (1) and (2) follow from Theorem \ref{Theorem D2} and Theorem \ref{slow-coherent-1}. Part (3) follows from Theorem \ref{slow-coherent-1}.    
\end{proof}

\begin{proof}[Proof of Theorem \ref{Theorem D}]
It follows from Lemma \ref{homology-slow-limit-free}  and the fact that Theorem \ref{slow-coherent-1} implies that every limit group over a coherent RAAG is $K$-slow above dimension $1$.    
\end{proof}

\begin{proof}[Proof of Theorem \ref{Theorem E}]
The proof is an adaptation of the proof from \cite{Desi2} to the setting of residually Droms RAAGs. Thus, we just give a sketch of the proof.

From Theorem \ref{Theorem 10.1} we get that $G$ is a full subdirect product of limit groups over Droms RAAGs $G_0 \times G_{1}\times \cdots \times G_{r}$ such that $G_0$ is abelian (possibly trivial), $G \cap G_0$ has finite index in $G_0$ and $G_i$ has trivial center for $i\in \{1, 2,\dots,r\}$. 
By Proposition~\ref{free-by-(torsion-free nilpotent)}, $G_{i}$ is free-by-(torsion-free nilpotent).

 Suppose that $G_0$ is trivial. Then, part (1) from Theorem \ref{Theorem E} follows from Theorem \ref{residual} and Theorem \ref{FPs}.  Indeed, by Proposition \ref{free-by-(torsion-free nilpotent)}, each $G_i$  has a free normal subgroup $N_i$ such that $G_i \slash N_i$ is torsion-free nilpotent. By \cite[Theorem 6.2]{Jone}, $G_i\slash (G \cap G_i)$ is virtually nilpotent, hence $F_i = G \cap N_i$ is a free normal subgroup of $G_i$ such that $G_i/ F_i$ is virtually nilpotent. Hence
 there is a subgroup of finite index $\widetilde{G}_i$ in $G_i$  that contains $F_i$  such that  $\widetilde{G}_i \slash F_i$ is torsion-free nilpotent. Then we can apply Theorem \ref{residual} for $\widetilde{G} \colon = G \cap ( \widetilde{G}_1 \times \cdots \times \widetilde{G}_r) <  \widetilde{G}_1 \times \cdots \times \widetilde{G}_r$. This guarantees an exhausting normal chain $(B_n)$ of $\widetilde{G}$ that in general might not be a normal chain in $G$  but it is still an exhausting chain in $G$ such that for $0\leq j \leq m$, $ \lim_{n\to \infty} \frac{\dim H_{j}(B_n,K)}{[G \colon B_n]}=0$.

If $G_0$ is non-trivial, $G \cap G_0$ is non-trivial, free abelian and central, so part (1) from Theorem \ref{Theorem E} follows from \cite[Lemma 7.2]{Desi2}.

It remains to consider part (2) from Theorem \ref{Theorem E}. From Theorem \ref{Theorem 3.1}, $G$ has a subgroup of finite index $H= H_1 \times \cdots \times H_{ r_0}$ where each $H_i$ is a limit group over  Droms RAAGs. Following the proof of \cite[Theorem E]{Desi2}, we can choose $ ( B_i)$ an exhausting normal chain in $H$ and such that $B_i =  ( B_i  \cap  H_1 ) \times \cdots \times (B_i \cap H_{ r_0})$. Then, as in \cite{Desi2}, we can deduce that for $j \geq 1$,
\[ \lim_{n  \to \infty } \frac{\dim H_{j}(B_n, K)}{[G \colon B_n]}=(-1)^{ r_0} \delta_{j,{ r_0}} \chi(G),\]
where $\delta_{j,{ r_0}}$ is the Kronecker symbol.

A limit group over a Droms RAAG does not contain a direct product of two or more non-abelian free groups, since limit groups over Droms RAAGs are coherent (see \cite[Corollary 7.8]{Montse}) and the direct product of two non-abelian free groups is not coherent. In addition, every limit group over a Droms RAAG that has trivial center contains a non-abelian free group (see, for instance, \cite[Property 2.10]{Jone}), so ${ r_0}= \rho$ unless one or more $H_i$ has non-trivial center. If some $H_i$ has non-trivial center, $\chi(H_i)= 0$ and so $\chi(H) = 0$ and $ \chi(G)=0$.    
\end{proof}

\begin{proof}[Proof of Theorem \ref{Theorem F}]
In Theorem \ref{residual2}
 M. Bridson and D. Kochloukova proved  a low dimensional homotopical version of Theorem \ref{Theorem E} for limit groups over free groups. The proof of  Theorem \ref{residual2}  relies on some general results about residually finite groups, see \cite[Lemma 8.1, Lemma 8.2]{Desi2}, and some specific results for limit groups over free groups. The properties of limit groups over free groups that are used in the proof of  Theorem \ref{residual2}  still hold for the class of limit groups over Droms RAAGs and correspond to Theorem \ref{FPs} and Theorem \ref{Theorem 10.1}, so the same proof can be used in order to show Theorem \ref{Theorem F}.    
\end{proof}

\end{document}